\numberwithin{equation}{section}
\numberwithin{figure}{section}
\newtheorem{thm}{Theorem}[section]
\newtheorem{lemma}[thm]{Lemma}
\newtheorem{prop}[thm]{Proposition}
\newtheorem{cor}[thm]{Corollary}
\theoremstyle{definition}
\newtheorem{definition}[thm]{Definition}
\newtheorem{example}[thm]{Example}
\newtheorem{rem}[thm]{Remark}
\newtheorem*{theorem_intro}{Theorem}
\newcommand{\C}{\mathbb{C}}
\newcommand{\M}{\mathcal{M}}
\newcommand{\U}{\mathcal{U}}
\newcommand{\N}{\mathcal{N}}
\newcommand{\E}{\mathcal{E}}
\newcommand{\CP}{\mathbb{C}\mathbf{P}}
\renewcommand{\P}{\mathbb{P}}
\renewcommand{\epsilon}{\varepsilon}
\newcommand{\R}{\mathbb{R}}
\newcommand{\Z}{\mathbb{Z}}
\newcommand{\sC}{\mathscr{C}}
\newcommand{\Ga}{\Gamma}
\newcommand{\cF}{\mathcal F}
\renewcommand{\geq}{\geqslant}
\renewcommand{\leq}{\leqslant}
\renewcommand{\phi}{\varphi}
\newcommand{\Aut}{\operatorname{Aut}}
\newcommand{\id}{\mathrm{id}}
\newcommand{\Mor}{\mathrm{Mor}}
\newcommand{\Div}{\mathrm{Div}}
\newcommand{\An}{\mathcal{A}n}
\newcommand{\X}{\mathcal{X}}
\newcommand{\Y}{\mathcal{Y}}
\newcommand{\ul}[1]{\underline{#1}}
\newcommand{\Gal}{\mathrm{Gal}}
\newcommand{\SL}{\mathbf{SL}}
\newcommand{\PSL}{\mathbf{PSL}}
\newcommand{\GL}{\mathbf{GL}}
\newcommand{\fh}{\mathfrak{h}}
\newcommand{\lmt}{\longmapsto}
\newcommand{\lra}{\longrightarrow}
\def\Circlearrowright{\ensuremath{%
  \rotatebox[origin=c]{180}{$\circlearrowright$}}}
\title{Orbifolds and the modular curve}
\author{Juan Martín Pérez}
\address{Freie Universität Berlin, Arnimallee 3, Raum 011, 14195 Berlin, Germany.}
\email{martin.perez@fu-berlin.de}
\author{Florent Schaffhauser}
\address{Mathematisches Institut, Universität Heidelberg, Im Neuenheimer Feld 205, 69120 Heidelberg, Germany}
\email{fschaffhauser@mathi.uni-heidelberg.de}
\begin{document}

\begin{abstract}
We provide an account of the construction of the moduli stack of elliptic curves as an analytic orbifold. While intimately linked to Thurston's point of view on the subject (discrete groups acting properly and effectively on differentiable manifolds), the construction of the modular orbi-curve and its universal family of elliptic curves ends up requiring a bit more technology, in order to allow for non-effective actions.
\end{abstract}

\date{\today}

\subjclass{Primary 14D23; Secondary 32G13}
\keywords{Stacks and moduli problems; Complex-analytic moduli problems}

\maketitle

\vspace{-15pt}

\tableofcontents

\vspace{-25pt}

\section*{Introduction}

Orbifolds were first introduced by Satake, under the name $V$-manifolds (\cite{Satake_generalized_manifolds}), as a way to handle the \textit{ramification} phenomena that occur in the general theory of group actions on manifolds. And it is generally accepted in the community that Satake chose that name in reference to the German word \textit{Verzweigung}, which means ramification.\footnote{The second-named author believes he heard this explanation of the term in a talk by Professor Kaoru Ono, who asked Satake himself directly about it.} Now, ramification is a concept that is also present in the analytic or algebraic contexts, in particular to talk about morphisms that are \textit{not étale}. And it is in fact remarkable that some of the concepts introduced by Satake and Thurston for differential-geometric purposes, namely orbifolds and their fundamental groups (\cite{Satake_Gauss_Bonnet,Thurston_2002}), have natural algebro-geometric counterparts that can in turn be used to enrich our perspective and the scope of our methods in differential and analytic geometry. One of the goals of the present chapter is to illustrate this point of view through the example of the moduli space of elliptic curves.

\smallskip

It is well-known that elliptic curves can be classified by their $j$-invariant and this is the basis for the construction of the so-called modular curve, whose points parameterise isomorphism classes of elliptic curves (\cite{McKean_Moll}). However, there are various reasons why we cannot quite content ourselves with the classical construction of this space. The main one, perhaps, being that the existence of a universal family of elliptic curves can only be obtained in the context of stacks, not manifolds. In this chapter, we will present a construction of the moduli stack of elliptic curves and show that it is in fact an analytic orbifold. This illustrates the need to go beyond the original definition of an orbifold because, in Thurston's definition of the quotient orbifold $[X/G]$, the group $G$ is supposed to act \textit{effectively} on the manifold $X$, but to prove that the moduli stack of elliptic curves is analytic, one ends up describing it as the quotient stack $[\fh/\SL(2;\Z)]$, where $\fh$ is the upper half-plane in $\C$ and $\SL(2;\Z)$ acts on it by homographic transformations. This action is not effective, and in fact its kernel is precisely $\{\pm I_2\}$, so $\PSL(2;\Z)$ acts effectively on $\fh$. While $\fh/\SL(2;\Z)$ and $\fh/\PSL(2;\Z)$ are homeomorphic topological spaces, the quotient stacks $[\fh/\SL(2;\Z)]$ and $[\fh/\PSL(2;\Z)]$ are not isomorphic, and only the first one is isomorphic to the moduli stack of elliptic curves, so the fact that $\fh/\PSL(2;\Z)$ is an orbifold in the sense of Thurston is not entirely satisfying here, even though it does get us the correct coarse moduli space. Resolving this issue by showing that $\fh/\SL(2;\Z)$ carries a canonical structure of non-effective orbifold is not quite good enough either, because of the difficulties that arise with the universal family. So the only way out of this seems to be considering the quotient stack $[\fh/\SL(2;\Z)]$ and showing that it is an orbifold in the stacky sense. Of course, there are deeper implications to this discussion, for instance the fact that the fundamental group of the moduli space of elliptic curves should be $\SL(2;\Z)$, not $\PSL(2;\Z)$, because elliptic curves always have an Abelian group structure, so in particular an inversion map $x\lmt -x$, which is the map coming from the element $-I_2\in \SL(2;\Z)$ acting trivially on $\fh$.

\smallskip

The example of the moduli space of elliptic curves illustrates how thinking about non-effective orbifolds as a particular class of stacks may be useful, even for classical problems. This suggests the need to go beyond Satake and Thurston's original definitions, using the input from algebraic geometry to enrich the differential-geometric perspective on the notion of orbifold. Note that this is not the only occurrence of such a phenomenon. While no evidence suggests that the theory of the étale fundamental group of schemes may have been of inspiration to Thurston (except perhaps his construction of the universal cover as an inverse limit in \cite[Proposition~13.2.4, pp.305--307]{Thurston_2002}), the analogy is clear, and in fact illustrates a common trend between the two fields, which is to include more maps in the definition of a covering space in order to have a richer notion of fundamental group. Indeed, consider for instance the orthogonal projection from the unit sphere in $\R^3$ to the closed unit disk in $\R^2\times\{0\}\subset\R^3$. In the usual topology of $\R^3$, this map is not a covering map (in fact, it is not even locally injective at points of the unit circle). But in algebraic geometry, it is the map induced on the set of closed points by the canonical projection $\P^1_\C\to\P^1_\R=\P^1_\C/\Gal(\C/\R)$, which is an étale morphism of schemes (in particular, the étale fundamental group of $\P^1_\R$ is not trivial, and since $\P^1_\C$ is simply connected, it is in fact exactly $\Gal(\C/\R)$). If we view this example through the orbifold prism, the orbifold fundamental group, in the sense of Thurston, of the quotient of $S^2\subset\R^3$ by the group generated by the reflection through the equator is indeed $\Z/2\Z$, just like in algebraic geometry. More generally, this leads to a richer notion of fundamental groups for all Klein surfaces (= quotients of Riemann surfaces by anti-holomorphic involutions), for which we refer for instance to \cite{Sch_CRM}. In analytic and algebraic geometry too, it is useful to allow orbifolds: the fundamental group of the quotient of $\C$ by the finite group $\mu_n(\C)\simeq\Z/n\Z$ acting by multiplication by a primitive $n$-th root of unity should be $\mu_n(\C)$, not the trivial group. As a matter of fact, the canonical projection $\C\lra \C/\mu_n(\C)$ is the typical case of a map which is ramified when seen as a morphism between smooth algebraic or analytic varieties (in which case it can be identified with the map $z\lmt z^n$), but which can be viewed as a covering map in the orbifold sense (see Example \ref{covers_of_quotient_stack}).

\smallskip

Recall that a complex-analytic elliptic curve is a pair $(\sC,e)$ where $\sC$ is a compact Riemann surface of genus $1$ and $e$ is a point in $\sC$. A morphism between two elliptic curves $(\sC_1,e_1)$ and $(\sC_2,e_2)$ is a holomorphic map $f:\sC_1\lra \sC_2$ such that $f(e_1)=e_2$. We shall denote by $f:(\sC_1,e_1)\lra (\sC_2,e_2)$ such a morphism. The main result that we wish to discuss in this chapter is stated as follows.

\begin{theorem_intro}[Theorem \ref{main_goal}]
The moduli stack of complex elliptic curves is a complex analytic stack, isomorphic to the orbifold $[\fh/\SL(2,\Z)]$, where $\fh:=\{z\in\C\ |\ \mathrm{Im}\,(z)>0\}$ is the hyperbolic plane and $$\SL(2,\Z):=\left\{\begin{pmatrix} a & c \\ b & d \end{pmatrix} : a,b,c,d \in \Z,\, ad-bc=1\right\}$$ is the so-called modular group, acting to the right on $\mathfrak{h}$ by homographic transformations 
$$ z \cdot \begin{pmatrix}a & c \\ b & d \end{pmatrix} = \frac{az+b}{cz+d}
\, .$$ 
This moduli stack admits the (non-compact) Riemann surface $\fh/\SL(2,\Z) \simeq \C$ as a coarse moduli space.
\end{theorem_intro}

In Section \ref{families_ell_curves}, we recall basic results on elliptic curves. Sections \ref{primer_stacks} and \ref{orbifolds_section} contain the formal definition of a stack and an explanation of what it means for a stack defined on the category of complex analytic manifolds to be analytic, and to be an orbifold. In Sections \ref{moduli_spaces_section} and \ref{stack_of_ell_curves}, we discuss the notion of moduli spaces for a  complex analytic stack and we prove that the stack of elliptic curves is an analytic orbifold. 

\medskip

This chapter is entirely expository and makes no claims to originality: its main goal is to be self-contained enough in order to be useful to young researchers who are entering the field and are interested in the interactions between differential and algebraic geometry, all the while celebrating the vitality and scope of Thurston's geometric vision.

\section{Complex elliptic curves}\label{families_ell_curves}

\subsection{Complex tori}

Let us start by recalling the definition of a complex elliptic curve (in the analytic setting). We recall that the (geometric) genus of a compact connected Riemann surface $\sC$ is the dimension of the complex vector space $H^{0}(\sC;\Omega^{1}_{\sC})$ which, as $\sC$ is $1$-dimensional and compact, is the space of holomorphic $1$-forms on $\sC$.

\begin{definition}\label{ell_curve_def}
An analytic \textit{complex elliptic curve}\index{elliptic curve} is a pair $(\sC,e)$ where $\sC$ is a compact Riemann surface of genus $1$ and $e$ is a point in $\sC$. A \textit{morphism of elliptic curves} between $(\sC,e)$ and $(\sC',e')$ is a holomorphic map $f: \sC \lra \sC'$ such that $f(e)=e'$.
\end{definition}

We now give the two sources of examples of complex elliptic curves that will be of interest to us: complex tori (Example \ref{cx_tori}) and smooth plane cubics (Example \ref{Legendre_family}).

\begin{example}\label{cx_tori}
Let us see $\C \simeq \R^2$ as a two-dimensional real vector space (equipped with its usual topology and additive group structure) and let $\Lambda$ be a discrete subgroup of $\C$ such that the quotient group $\C/\Lambda$ is compact (such a subgroup $\Lambda \subset \C$ will be called a (uniform) \textit{lattice}\index{lattice}). In particular, the group $\Lambda$ is a free $\Z$-module of rank $2$. Note that there is a canonical marked point in $\C/\Lambda$ in this case, namely $0\ \text{mod}\ \Lambda$, the neutral element for the group law. Moreover, there is a (unique) Riemann surface structure on $\C / \Lambda$ that makes the canonical projection $p: \C \lra \C/\Lambda$ holomorphic. A holomorphic $1$-form on $\C/\Lambda$ is necessarily closed, so it pulls back to an exact global $1$-form $f(z)\, dz$ on the simply connected Riemann surface $ \C $, with $f(z)$ holomorphic and $\Lambda$-periodic (in the sense that, if $ z \in \C $ and $\lambda \in \Lambda$, then $f(z+\lambda) = f(z)$; this holds because $dz$ is $\Lambda$-invariant for the action of $\Lambda$ on $\C$ by translation). Since $\C/\Lambda$ is compact, the holomorphic function $f$ has to be bounded, therefore constant by Liouville's theorem, and this proves that the space of holomorphic $1-$forms on $\mathbb{C}/\Lambda$ has complex dimension 1, hence that we have constructed a complex elliptic curve $(\C/\Lambda, 0\ \text{mod}\ \Lambda)$, which we will refer to as a ($1$-dimensional) \emph{complex torus}\index{complex torus}.
\end{example}

The complex tori of Example \ref{cx_tori} can be embedded onto non-singular complex projective sets using the Weierstrass $\wp$-function of the lattice $\Lambda$, namely 
$$
\wp_\Lambda(w) = \frac{1}{w^2} + \sum_{\lambda \in \Lambda\setminus\{0\}} \left( \frac{1}{(w-\lambda)^2} - \frac{1}{\lambda^2} \right)
$$
which, if we pick a basis $(\lambda,\mu)$ of $\Lambda$ as a $\Z$-module, satisfies the equation 
\begin{equation}\label{image_proj_embedding}
(\wp_\Lambda'(w))^2 = f_{(\lambda,\mu)}\big(\wp_\Lambda(w)\big),
\end{equation}
where 
$$
f_{(\lambda,\mu)}(x) = 4 (x - v_1) (x-v_2) (x-v_3)$$
with
$$
v_1 = \wp_\Lambda\left(\textstyle\frac{\lambda}{2}\right) \in \C,  v_2 = \wp_\Lambda\left(\textstyle\frac{\mu}{2}\right) \in \C \ \text{and} \ v_3 = \wp_\Lambda\left(\textstyle\frac{\lambda+\mu}{2}\right) \in \C\, ,
$$
thus inducing a holomorphic map 
\begin{equation}\label{embedding_cx_torus_in_proj_plane}
\begin{array}{rcl}
\C/\Lambda & \lra & \CP^2 \\
w \text{ mod } \Lambda & \lmt & \left[ \wp_\Lambda(w) : \wp_\Lambda'(w) : 1 \right]
\end{array}
\end{equation}
whose image, because of Equation \eqref{image_proj_embedding} and up to normalisation of the basis $(\lambda,\mu)$, is a smooth projective curve of the type described in Equation \eqref{eq123}.

\begin{example}\label{Legendre_family}
 Let $u\in \C\setminus\{0,1\}$ and consider the smooth projective curve
\begin{equation}\label{eq123}
\sC_{u} := \left\{ [x:y:z] \in\mathbb{CP}^{2} \ | \ y^{2}z=x(x-z)(x-u z)\right\}.
\end{equation}
By the genus-degree formula, the smooth cubic $\sC_{u}$ has genus $ g = \frac{ ( 3 - 1 ) ( 3 - 2 ) }{ 2 } = 1 $. And since the point $e:=[0:0:1]$  belongs to $\sC_{u}$ for all $u\in\mathbb{C}\setminus\{0,1\}$, we have defined a family of elliptic curves $(\sC_{u},e)_{u\in\C\setminus\{0,1\}}$, depending on the parameter $u$. This family is called the \textit{Legendre family}\index{Legendre family of elliptic curves} (see also Example \ref{Legendre_family_bis}).
\end{example}

Building on Example \ref{Legendre_family}, let us set $f_u(x) = x(x-1)(x-u)$. The classical theory of elliptic curves says that $\sC_u$ is a compactification of the plane cubic of equation $y^2 = f_u(x)$. Moreover, the map $(x,y) \lmt x$ induces a ramified double covering $p: \sC_u \lra \CP^1$, with four branch points, so an application of the Riemann-Hurwitz formula shows that $\sC_u$ is homeomorphic to $S^1 \times S^1$. If we pick a square root of $ f_u $, the differential form $\frac{dt}{\sqrt{f_u(t)}}$ on $\CP^1$ pulls back to a holomorphic $1$-form  $\omega_u := p^*\frac{dt}{\sqrt{f_u(t)}}$ that can be extended holomorphically to $\sC_u$. Then we can integrate this $1$-form along loops based at $e \in \sC_u$ and, since $\omega_u$ is closed, obtain a group morphism 
\begin{equation}\label{period_subgroup}
\Phi_{\omega_u}: 
\begin{array}{rcl}
\pi_1(\sC_u, e) & \lra & \C \\
\left[\gamma\right] & \lmt & \int_\gamma \omega_u
\end{array}
\end{equation}
whose image $\Lambda_{\omega_u} := \mathrm{Im}\,(\Phi_{\omega_u})$ is in fact a lattice in $\C$, called the \emph{period lattice}\index{lattice!period lattice}. This induces a holomorphic map 
\begin{equation}\label{isom_smooth_plane_cubic_cx_torus}
\begin{array}{rcl}
\sC_u & \lra & \C/\Lambda_{\omega_u} \\
x & \lmt & \int_{e}^{x} \omega_u \ \text{mod} \ \Lambda_{\omega_u}
\end{array}
\end{equation}
which is an isomorphism of elliptic curves $(\sC_u,e) \simeq (\C/\Lambda_{\omega_u}, 0\ \text{mod}\  \Lambda_{\omega_u})$. Note that if we choose a different non-zero holomorphic $1$-form $\omega$ on $\sC_u$, it will satisfy $\omega = \alpha \omega_u$ for some $\alpha\in\C^*$, because the space of such forms is $1$-dimensional. Consequently, the lattices $\Lambda_{\omega_u}$ and $\Lambda_{\omega}$ satisfy  $\Lambda_{\omega} = \alpha  \Lambda_{\omega_u}$, so multiplication by $\alpha$ in $\C$ induces an isomorphism of elliptic curves $(\C/\Lambda_{\omega_u}, 0\ \text{mod}\ \Lambda_{\omega_u}) \simeq (\C/\Lambda_\omega, 0\ \text{mod}\ \Lambda_\omega)$. 

\medskip

So, from the existence of the projective embedding \eqref{embedding_cx_torus_in_proj_plane} and the isomorphism \eqref{isom_smooth_plane_cubic_cx_torus}, we see that there is a bijection
$$
\big\{ 1\text{-dimensional complex tori} \big\} \, \big/ \, \text{isomorphism} \simeq \big\{ \text{smooth plane cubics} \big\} \, \big/ \, \text{isomorphism} \, .
$$

And as a matter of fact, every elliptic curve $(\sC,e)$ in the sense of Definition \ref{ell_curve_def} is of this form, meaning that it is isomorphic to a one-dimensional complex torus. One way to show this is to generalise the construction of the period subgroup \eqref{period_subgroup} to the abstract setting: choose a generator $\omega$ of the $1$-dimensional complex vector space $H^0(\sC;\Omega^{1}_{\sC})$ and show that the image of the group morphism $\Phi_\omega$ is a lattice $\Lambda_\omega \subset \C$ and that there is an isomorphism of elliptic curves $(\sC,e) \simeq (\C/\Lambda_\omega, 0 \ \text{mod}\ \Lambda_\omega)$. The issue here is that, with our definition of the genus, we do not yet know that $\sC$ is homeomorphic to $S^1\times S^1$, which makes it harder to prove that the period subgroup is a lattice in $\C$. The way to solve this difficulty is to use Abel's theorem, which gives a necessary and sufficient condition for a Weil divisor $D$ on a compact Riemann surface $\sC$ to be  a principal divisor, i.e.\ for $D$ to be the divisor of zeros and poles of a meromorphic function on $\sC$. More precisely, one can follow the approach in \cite[pp.163--173]{Forster} to show the following:
\begin{enumerate}
\item A Weil divisor $D\in \Div(\sC)$ is the divisor of zeros and poles of a meromorphic function $f$ on $\sC$ if and only if $D$ is of degree $0$ and there exists a chain $c \in C_1(\sC;\Z)$ such that $\partial c = D$ and $\int_c \omega = 0$ (Abel's theorem in the $g=1$ case, \cite[Theorem 20.7, p.163]{Forster}).
\item The period subgroup $\Lambda_\omega := \mathrm{Im}\,\Phi_\omega \subset \C$ is a lattice \cite[Theorem 21.4, p.168]{Forster}. This uses the previously stated version of Abel's theorem, to prove that $\Lambda_\omega$ is a discrete subgroup of $\C$.
\end{enumerate}
One gets in this way a $1$-dimensional complex torus $\mathrm{Jac}(\sC) := \C / \Lambda_\omega$, namely the Jacobian variety of the elliptic curve $(\sC,e)$. It depends on $\omega$, but if we choose a different basis $\alpha \omega$ of the complex vector space $H^0(\sC;\Omega^{1}_{\sC})$, we obtain an isomorphic complex torus, as we have already seen. Moreover, the Riemann-Roch theorem has the following consequence.

\begin{lemma}\label{gp_structure}
Let $(\sC,e)$ be an elliptic curve and let $\mathrm{Pic}_0(\sC) := \Div_0(\sC) / \Div_P(\sC)$ be the group of degree $0$ divisors modulo principal divisors. Then the map 
$$
\Psi:
\begin{array}{rcl}
\sC  & \lra & \mathrm{Pic}_0(\sC) \\
x & \lmt & [x] - [e]
\end{array}
$$
is bijective.
\end{lemma}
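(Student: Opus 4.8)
The plan is to establish injectivity and surjectivity of $\Psi$ separately. Both rely only on standard facts about compact Riemann surfaces: that a degree-one holomorphic map onto $\CP^1$ is an isomorphism, and the Riemann–Roch theorem (which is the result the surrounding discussion is invoking here).

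First I would prove injectivity. Suppose $\Psi(x) = \Psi(y)$, i.e.\ that $[x]-[y]$ is a principal divisor, so that $\dive(f) = [x] - [y]$ for some meromorphic function $f$ on $\sC$. If $x \neq y$, then $f$, regarded as a holomorphic map $\sC \to \CP^1$, has a single pole, and that pole is simple; hence $f$ has degree $1$ and is therefore biholomorphic. But this would force $\sC \cong \CP^1$, contradicting $\dim_\C H^0(\sC;\Omega^1_\sC) = 1$ (the genus of $\sC$ is $1$, not $0$). Hence $x = y$, and $\Psi$ is injective.

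Next, surjectivity. Given $D \in \Div_0(\sC)$, set $D' := D + [e]$, a divisor of degree $1$. In genus $1$ the canonical class $K$ has degree $2g-2 = 0$, so $K - D'$ has negative degree and therefore $H^0(\sC; \mathcal{O}_\sC(K-D')) = 0$. Riemann–Roch then yields $\dim_\C H^0(\sC; \mathcal{O}_\sC(D')) = \deg(D') + 1 - g = 1$. Choosing a nonzero section $f$ of $\mathcal{O}_\sC(D')$, the divisor $\dive(f) + D'$ is effective of degree $1$, hence equals $[x]$ for a unique point $x \in \sC$. Thus $D' \sim [x]$, i.e.\ $D \sim [x] - [e] = \Psi(x)$ in $\mathrm{Pic}_0(\sC)$, which proves surjectivity.

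There is no real obstacle here beyond bookkeeping: once Riemann–Roch is granted, the only points requiring a moment's care are that $\deg K = 2g - 2 = 0$ in genus $1$ (so that the "correction term" in Riemann–Roch vanishes for a degree-one divisor) and, for injectivity, the clean fact that a compact Riemann surface admitting a degree-one map to $\CP^1$ must itself be isomorphic to $\CP^1$. Both are standard, so the lemma is essentially immediate from the theory recalled above.
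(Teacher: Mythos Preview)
Your proof is correct and follows essentially the same route as the paper's: injectivity via the impossibility of a degree-one map to $\CP^1$ on a genus-one curve, and surjectivity via Riemann--Roch applied to $D' = D + [e]$. The only cosmetic difference is that you compute $\dim H^0(\sC;\mathcal{O}_\sC(D'))$ exactly using the vanishing of $H^0(\sC;\mathcal{O}_\sC(K-D'))$, whereas the paper is content with the inequality $\dim H^0(\sC;\mathcal{O}_\sC(D')) \geq \deg(D') = 1$; either yields a nonzero section and the argument concludes identically.
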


\begin{proof}
The map $\Psi$ is injective because, if $\Psi(x) = \Psi(x')$ with $x\neq x'$, then the Weil divisor $[x]-[x']$ would be principal, so there would exist a meromorphic function $f$ on $\sC$ with exactly one pole, of order $1$, and the corresponding holomorphic map $f:\sC \lra \CP^1$ would be an isomorphism, contradicting the fact that $\dim_\C H^0(\sC;\Omega^{1}_{\sC}) = 1$.
Moreover, the map $\Psi$ is surjective because every divisor $D$ of degree $0$ on $\sC$ is equivalent, modulo a principal divisor, to a divisor of the form $[x]-[e]$. Indeed, if we set $D' = D +[e]$, then $\deg(D') = \deg(D)+1= 1$ so, by the Riemann-Roch theorem, $\dim H^0(\sC;O_\sC(D')) \geq \deg(D') = 1$, meaning that there exists a non-zero meromorphic function $f$ on $\sC$ whose poles are bounded by $D'$, i.e.\ in terms of divisors, $(f) + D' \geq 0$. Since $(f)+D'$ is an effective Weil divisor of degree $1$, there exists a point $x$ in $\sC$ such that $(f)+D' = [x]$, hence $D = D' - [e] \sim_{(f)} [x] - [e]$.
\end{proof}

\begin{rem}
Lemma \ref{gp_structure} shows in particular that an elliptic curve $(\sC,e)$ has a natural structure of Abelian group, induced by the isomorphism $\Psi: (\sC,e) \overset{\simeq}{\lra} (\mathrm{Pic}_0(\sC),0)$ and the group structure on $\mathrm{Pic}_0(\sC)$. In particular, an elliptic curve always has non-trivial automorphisms (namely the inversion map $D \lmt -D$, seen as a morphism of marked Riemann surfaces).
\end{rem}

Finally, one can deduce from the above that a complex elliptic curve $(\sC,e)$ is isomorphic to a complex torus, namely the Jacobian variety $\mathrm{Jac}(\sC) := \C/\Lambda_\omega$, defined by the period lattice. This proves in particular that $\pi_1(\sC,e) \simeq \Lambda_\omega \simeq \Z^2$. So we see from the classification of compact connected orientable surfaces that $\sC$ is homeomorphic to $S^1\times S^1$.

\begin{thm}\label{isom_btw_ell_curve_and_its_Jacobian}
Let $(\sC,e)$ be a complex elliptic curve and let $\omega$ be a non-zero holomorphic $1$-form on $\sC$. Let $\Lambda_\omega \subset \C$ be the period lattice and let $\mathrm{Jac}(\sC) := \C/\Lambda_\omega$ be the associated complex torus, i.e.\ the Jacobian variety of $(\sC,e)$. Then the map 
$$
F:
\begin{array}{rcl}
\sC & \lra & \mathrm{Jac}(\sC) = \C/\Lambda_{\omega} \\
x & \lmt & \int_{e}^{x} \omega \ \mathrm{mod} \ \Lambda_{\omega}
\end{array}
$$ is an isomorphism of elliptic curves.
\end{thm}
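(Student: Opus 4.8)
The plan is to show that $F$ is a holomorphic bijection sending $e$ to $0 \bmod \Lambda_\omega$, and then to conclude with the fact that a holomorphic bijection between Riemann surfaces is a biholomorphism. First, $F$ is well-defined: if $\alpha,\beta$ are two paths in $\sC$ from $e$ to $x$, then $\alpha\cdot\overline{\beta}$ is a loop based at $e$, so $\int_\alpha\omega-\int_\beta\omega=\int_{\alpha\cdot\overline{\beta}}\omega=\Phi_\omega([\alpha\cdot\overline{\beta}])\in\Lambda_\omega$, using that $\omega$ is closed (holomorphic $1$-forms on Riemann surfaces are closed). Next, $F$ is holomorphic: on a simply connected coordinate chart $U\subset\sC$, fixing a path from $e$ to a basepoint of $U$ and integrating $\omega$ along paths inside $U$ produces a holomorphic primitive of $\omega|_U$, and composing with the locally biholomorphic projection $\C\to\C/\Lambda_\omega$ shows that $F|_U$ is holomorphic. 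Clearly $F(e)=0\bmod\Lambda_\omega$. Finally $F$ is non-constant because $\omega\neq 0$ (a constant local primitive would force $\omega$ to vanish on a chart), and since $\sC$ is compact and $\C/\Lambda_\omega$ is connected, the open mapping theorem shows that $F$ is surjective.

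It remains to prove that $F$ is injective, and this is where Abel's theorem and Lemma~\ref{gp_structure} enter. Consider the group homomorphism
$$
\Theta:
\begin{array}{rcl}
\mathrm{Pic}_0(\sC) & \lra & \C/\Lambda_\omega \\
\left[D\right] & \lmt & \textstyle\int_c \omega \ \mathrm{mod}\ \Lambda_\omega,
\end{array}
$$
where $c\in C_1(\sC;\Z)$ is any $1$-chain with $\partial c = D$ (such a $c$ exists because $D$ has degree $0$ and $\sC$ is path-connected). It is well-defined: if $\partial c=\partial c'=D$ then $c-c'$ is a $1$-cycle and $\int_{c-c'}\omega\in\Lambda_\omega$ (the integral of $\omega$ over a $1$-cycle is a period), and if $D$ is moreover principal, Abel's theorem provides a chain $c_0$ with $\partial c_0=D$ and $\int_{c_0}\omega=0$, so $\Theta([D])=0$. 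Moreover $\Theta$ is injective: if $\Theta([D])=0$, pick any $c$ with $\partial c=D$; then $\int_c\omega\in\Lambda_\omega$, say $\int_c\omega=\int_z\omega$ for a $1$-cycle $z$, and $c-z$ satisfies $\partial(c-z)=D$ with $\int_{c-z}\omega=0$, whence $D$ is principal by Abel's theorem. Now a path from $e$ to $x$ is exactly a $1$-chain with boundary $[x]-[e]$, so $F=\Theta\circ\Psi$ with $\Psi$ as in Lemma~\ref{gp_structure}. Since $\Psi$ is bijective and $F$ is surjective, $\Theta$ is surjective, hence bijective, and therefore $F$ is bijective.

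To conclude, $F$ is a holomorphic bijection from the compact Riemann surface $\sC$ to the Riemann surface $\C/\Lambda_\omega$, hence a homeomorphism; a holomorphic homeomorphism of Riemann surfaces has trivial ramification (locally $F$ has the form $w\mapsto w^k$, and injectivity forces $k=1$), so $F^{-1}$ is holomorphic, and since $F(e)=0\bmod\Lambda_\omega$ the map $F$ is an isomorphism of elliptic curves. I expect the only genuinely non-formal step to be the injectivity of $F$, which is Abel's theorem repackaged; all the rest is bookkeeping with path integrals. One could instead bypass Lemma~\ref{gp_structure}: as $\deg K_\sC=2g-2=0$ when $g=1$, a nonzero holomorphic $1$-form has effective divisor of degree $0$, hence no zeros, so $F$ is a local biholomorphism and thus a finite unbranched covering of the connected surface $\C/\Lambda_\omega$; its number of sheets equals the index of $F_*\pi_1(\sC,e)$ in $\pi_1(\C/\Lambda_\omega,0)\simeq\Lambda_\omega$, but under this identification $F_*\pi_1(\sC,e)=\mathrm{Im}\,\Phi_\omega=\Lambda_\omega$, so the index is $1$ and $F$ is a biholomorphism.
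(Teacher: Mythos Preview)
Your proof is correct and follows essentially the same route as the paper: you factor $F$ as $\Theta\circ\Psi$ through $\mathrm{Pic}_0(\sC)$ (your $\Theta$ is the paper's map $J$), use Lemma~\ref{gp_structure} for the bijectivity of $\Psi$, and invoke Abel's theorem for the injectivity of $\Theta$. You supply more detail than the paper does---it defers holomorphicity and the surjectivity of $J$ to Forster, whereas you handle surjectivity directly via the open mapping theorem and then push it through to $\Theta$---and your closing covering-space argument is a nice self-contained alternative.
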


We refer to \cite[Theorems 21.7 p.171 and 21.10 p.172]{Forster} for a complete proof of Theorem \ref{isom_btw_ell_curve_and_its_Jacobian}. Note that the map $F:\sC \lra \mathrm{Jac}(\sC)$ is the composition of the map $\Psi: \sC \lra  \mathrm{Pic}_0(\sC)$ of Lemma \ref{gp_structure} with the map 
$$
J:
\begin{array}{rcl}
\mathrm{Pic}_0(\sC) & \lra & \mathrm{Jac}(\sC) \\
D = \partial c & \lmt & \int_{c} \omega \ \text{mod} \ \Lambda_{\omega}
\end{array}
$$ which is well-defined by definition of the period lattice (i.e.\ independent of the choice of the chain $c\in C_1(\sC;\Z)$ such that $\partial c = D$), and injective in view of Abel's theorem. The surjectivity of $J$ is proven in \cite[Theorem 21.7 p.171]{Forster} and for the fact that the map $F:\sC \lra  \mathrm{Jac}(\sC)$ is holomorphic, we refer to \cite[Theorem 21.4 p.168]{Forster}.

\medskip

As an application, Theorem \ref{isom_btw_ell_curve_and_its_Jacobian} gives us the structure of the automorphism group of a complex elliptic curve and of a compact Riemann surface of genus $1$ without marked point. To obtain it, one just needs to observe that two complex tori $\C/\Lambda_1$ and $\C/\Lambda_2$ are isomorphic as complex elliptic curves if and only if there exists $\alpha\in\C^*$ such that $\alpha\Lambda_1 = \Lambda_2$, which can be deduced by lifting such an isomorphism to the universal cover $\C$ of these tori and using the classification of automorphisms of $\C$. In particular, the automorphism group of the elliptic curve $(\C/\Lambda,0)$ can be identified with the group of all $\alpha\in\C^*$ such that $\alpha\Lambda = \Lambda$. As $\Lambda$ is a discrete subgroup of $\C$, the set of such $\alpha$ is in fact a discrete subgroup of the circle group $S^1$, hence a finite rotation group, generated by a root of unity. If we do not ask to preserve the base point, then translations by an element $[z]\in\C/\Lambda$ also induce automorphisms of $\C/\Lambda$.

\begin{cor}\label{autom_of_elliptic_curves_and_genus_one_Riemann_surfaces}
Let $(\sC,e)$ be a complex elliptic curve. Then there exists a natural number $n\geq 1$ such that the automorphism group of the marked Riemann surface $(\sC,e)$ is
\begin{equation}\label{autom_of_elliptic_curves}
\Aut(\sC,e) \simeq \Z/n\Z.
\end{equation} Moreover, the automorphism group of the Riemann surface $\sC$ is the semi-direct product $$\Aut(\sC) \simeq \sC \rtimes \Aut(\sC,e)$$ where the group $\sC\simeq\mathrm{Pic}_0(\sC)$ acts on itself by translations and the structure of semi-direct product is given by the action of $\Aut(\sC,e)$ on $\sC$. In particular, the group $\Aut(\sC)$ is a $1$-dimensional compact analytic Lie group over $\C$, whose identity component is isomorphic to $\sC$.
\end{cor}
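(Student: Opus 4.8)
The plan is to reduce to the case of a complex torus and then assemble the group-theoretic picture from the universal cover $\C$. \textbf{Step 1 (reduction).} By Theorem~\ref{isom_btw_ell_curve_and_its_Jacobian}, $(\sC,e)$ is isomorphic to $(\C/\Lambda,0)$ with $\Lambda=\Lambda_\omega$ the period lattice, and since $\Aut(\sC,e)$ and $\Aut(\sC)$ depend only on the isomorphism class, we may assume $(\sC,e)=(\C/\Lambda,0)$. \textbf{Step 2 (lifting).} Any automorphism $g$ of the Riemann surface $\C/\Lambda$ lifts, along the universal cover $p:\C\to\C/\Lambda$, to a biholomorphism $\tilde g$ of $\C$ normalising the group of deck transformations $\Lambda\subset(\C,+)$; as the biholomorphisms of $\C$ are exactly the affine maps $z\mapsto\alpha z+\beta$ with $\alpha\in\C^*$, this normalisation condition reads $\alpha\Lambda=\Lambda$, and conversely every such pair $(\alpha,\beta)$ descends to an automorphism of $\C/\Lambda$. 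If $g(0)=0$ then $\tilde g(0)\in\Lambda$, so composing $\tilde g$ with a translation of $\C$ we may arrange $\beta=0$; hence $\Aut(\C/\Lambda,0)\cong\{\alpha\in\C^*:\alpha\Lambda=\Lambda\}$, the group already identified in the discussion preceding the statement. \textbf{Step 3 (cyclicity).} Picking $\lambda_0\in\Lambda\setminus\{0\}$ of minimal modulus, for any $\alpha$ with $\alpha\Lambda=\Lambda$ both $\alpha\lambda_0$ and $\alpha^{-1}\lambda_0$ lie in $\Lambda\setminus\{0\}$, which forces $|\alpha|=1$; then $\alpha\lambda_0$ ranges over the finite set of vectors of $\Lambda$ of modulus $|\lambda_0|$ and determines $\alpha$, so $\{\alpha:\alpha\Lambda=\Lambda\}$ is a finite subgroup of $S^1$, hence cyclic of some order $n\geq1$, giving \eqref{autom_of_elliptic_curves}.

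\textbf{Step 4 (semidirect product).} Sending an automorphism of $\C/\Lambda$ to the linear part $\alpha$ of any of its lifts (this is well defined, since two lifts differ by a translation) is a group homomorphism $\Aut(\C/\Lambda)\to\Aut(\C/\Lambda,0)$; its kernel is the set of automorphisms admitting a lift of the form $z\mapsto z+\beta$, i.e.\ the translation subgroup, a copy of $\C/\Lambda$, and it is split by the inclusion $\Aut(\C/\Lambda,0)\hookrightarrow\Aut(\C/\Lambda)$ realising each $\alpha$ as $[z]\mapsto[\alpha z]$. A direct computation shows that if $g$ has linear part $\alpha$ and $T_\beta$ denotes translation by $[\beta]$, then $g\circ T_\beta\circ g^{-1}=T_{\alpha\beta}$; hence the extension is the semidirect product $(\C/\Lambda)\rtimes\Aut(\C/\Lambda,0)$ with $\Aut(\C/\Lambda,0)$ acting on $\C/\Lambda$ through its tautological linear action. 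Transporting along the isomorphism $F$ of Theorem~\ref{isom_btw_ell_curve_and_its_Jacobian}, and using $\Psi$ of Lemma~\ref{gp_structure} for the identification $\sC\simeq\mathrm{Pic}_0(\sC)$, we obtain the asserted decomposition $\Aut(\sC)\simeq\sC\rtimes\Aut(\sC,e)$.

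\textbf{Step 5 (Lie group structure).} Endow $\Aut(\sC)$ with the analytic structure transported from the bijection with $(\C/\Lambda)\rtimes\Aut(\C/\Lambda,0)$: it is then a disjoint union of $n$ copies of the compact complex torus $\C/\Lambda$, and composition and inversion are given by affine formulae in the parameters $(\alpha,\beta)$, hence are analytic, so $\Aut(\sC)$ is a compact $1$-dimensional analytic Lie group over $\C$ whose identity component is the translation subgroup $\C/\Lambda\simeq\sC$. I expect Step~2 to be the main obstacle: the crux is to justify carefully that an arbitrary automorphism of the Riemann surface $\C/\Lambda$ lifts to an \emph{affine} self-map of $\C$, which rests on the identification of $\C$ as the universal cover of $\C/\Lambda$ and on the classification of biholomorphisms of $\C$. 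Everything else is bookkeeping around this, together with the discreteness and cocompactness of $\Lambda$.
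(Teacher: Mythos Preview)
Your argument is correct and follows essentially the same route as the paper: reduce to a complex torus via Theorem~\ref{isom_btw_ell_curve_and_its_Jacobian}, lift automorphisms to affine maps of $\C$ via the universal cover, identify $\Aut(\C/\Lambda,0)$ with $\{\alpha\in\C^*:\alpha\Lambda=\Lambda\}$, and observe this is a finite subgroup of $S^1$. Your Steps~4 and~5 on the semidirect product and the Lie group structure are more explicit than the paper's brief sketch (which essentially just asserts that translations fill out the rest of $\Aut(\sC)$), but the underlying ideas are the same.
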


\begin{rem}
We will see in Corollary \ref{autom_of_elliptic_curves_and_genus_one_Riemann_surfaces_again} that, in fact, the natural number $n$ in Equation \eqref{autom_of_elliptic_curves} can only be $2$, $4$ or $6$ (never $1$!). In particular, we will give an alternate, more detailed proof of Corollary \ref{autom_of_elliptic_curves_and_genus_one_Riemann_surfaces}.
\end{rem}

So the conclusion we can draw from the present paragraph is two-folded: firstly, the classification of complex elliptic curves up to isomorphism can be reduced to the classification of complex tori, and secondly, such objects have non-trivial automorphisms as marked Riemann surfaces. As a consequence, even though we can always find, given a complex elliptic curve $(\sC,e)$, an isomorphism of elliptic curves with a complex torus $\C/\Lambda$, this isomorphism will never be unique, because we can compose it with an automorphism of $\C/\Lambda$. This is one of the reasons why constructing a moduli space of complex elliptic curves requires additional notions, namely framings, in order to get rid of non-trivial automorphisms.

\subsection{Framed elliptic curves}

When faced with a moduli problem for objects that have non-trivial automorphisms, an idea that has proven useful is to add an extra structure to these objects. This is because the corresponding notion of isomorphism will then be more restrictive (since the extra structure needs to be preserved, too), so in particular there will be fewer automorphisms. In the best case scenario, there is a group action on these objects with extra structure, and two such objects with extra structure lie in the same orbit if and only if they are isomorphic as objects without extra structure, so the moduli space of objects without extra structure can be constructed as a quotient of the moduli space of objects with extra structure (provided the latter moduli space actually exists, of course). We will now illustrate this more concretely in the case of elliptic curves. We begin with the definition of an extra structure.

\begin{definition}\label{framed_ell_curve}\index{elliptic curve!framed elliptic curve}\index{framing}
Let $(\sC,e)$ be a complex elliptic curve. A \textit{framing} of $(\sC,e)$ is a basis $([a],[b])$ of the free $\Z$-module $H_{1}(\sC,\mathbb{Z}) \simeq \pi_1(\sC,e) \simeq \Z^2$, with the additional property that 
\begin{equation}\label{positivity_cond}
\text{for all non-zero } \omega \in H^0\big(\sC;\Omega^1_{\sC}\big),\ \mathrm{Im}\, \textstyle \frac{\int_a \omega}{\int_b \omega} >0\,.
\end{equation} Since $H^0(\sC;\Omega^1_{\sC})$ is one-dimensional by assumption, it suffices to test Condition \eqref{positivity_cond} on a single non-zero holomorphic $1$-form $\omega$. A morphism of framed elliptic curves $$f: \big(\sC,e,([a],[b])\big) \lra \big(\sC',e',([a'],[b'])\big)$$ is a morphism of elliptic curves $f:(\sC,e) \lra (\sC',e')$ such that $f_*([a]) = [a']$ and $f_*[b] = [b']$, where $f_*: H_1(\sC;\Z) \lra H_1(\sC';\Z)$ is the group morphism induced by $f$ in homology.
\end{definition}

\begin{example}\label{framed_lattices}
If $(\sC,e) = (\C/\Lambda,0)$ for some lattice $\Lambda \subset \C$ and $(\lambda,\mu)$ is  a basis  of the $\Z$-module $\Lambda$ that satisfies the additional positivity condition $\mathrm{Im}\,\frac{\mu}{\lambda} >0$, then $(\lambda,\mu)$ defines a framing of $(\C/\Lambda,0)$ by letting $\hat{\lambda}$ and $\hat{\mu}$ be the $1$-cycles in $\C/\Lambda$ corresponding to the paths $t \lmt t \lambda$ and $t \lmt t \mu$ in $\C$. A lattice $\Lambda\subset \C$ equipped with such a \textit{direct basis} $(\lambda,\mu)$ is also called a \textit{framed lattice}\index{lattice!framed lattice}, and we shall sometimes identify a framing of $\Lambda$ in this sense with the induced framing of $\C/\Lambda$. So, when we write $(\C/\Lambda,0,(\lambda,\mu))$, we mean the framed elliptic curve $(\C/\Lambda,0,(\hat{\lambda},\hat{\mu}))$. Note that if a non-zero holomorphic $1$-form $\omega$ on $\sC$ is fixed, then we can define $(\lambda,\mu)$ from $([a],[b])$ by setting $\lambda:=\int_a \omega$ and $\mu:=\int_b \omega$, but without such an $\omega$, we cannot recover $(\lambda,\mu)$ from $(\hat{\lambda},\hat{\mu})$. Regardless, if $\Lambda' = \alpha\Lambda$ for some $\alpha\in\C^*$, then the map $z \lmt \alpha z$ induces an isomorphism of framed elliptic curves 
$$
\big(\C/\Lambda,0,(\lambda,\mu)\big) \simeq \big(\C/(\alpha\Lambda),0,(\alpha\lambda,\alpha\mu)\big).
$$ In particular, if we set $\tau := \frac{\mu}{\lambda}$, then $\tau$ is a complex number with positive imaginary part, i.e.\ $\mathrm{Im}\,(\tau) >0$, and there is an associated lattice $\Lambda(\tau):= \Z1 \oplus \Z \tau \subset \C$ with direct basis $(1,\tau)$ such that the framed elliptic curve $(\C/\Lambda, 0, (\lambda,\mu))$ is isomorphic to the framed elliptic curve $(\C/\Lambda(\tau), 0, (1,\tau))$.
\end{example}

In view of Theorem \ref{isom_btw_ell_curve_and_its_Jacobian} and Example \ref{framed_lattices}, a framed complex elliptic curve $(\sC,e,([a],[b]))$ is isomorphic to a framed complex torus of the form $(\C/\Lambda(\tau), 0, (1,\tau))$ where $$ \tau \in \fh := \{z\in\C \ |\ \mathrm{Im}\,(z) >0\}.$$ Indeed, it suffices to show that a complex elliptic curve admits a framing, which comes from the simple observation that if a basis $([a],[b])$ for $H_1(\sC;\Z)$ does not satisfy the positivity condition \eqref{positivity_cond}, then the basis $([b],[a])$ does. Note that, a priori, given a framed elliptic curve $(\sC,e,([a],[b]))$, there could be more than one $\tau\in \fh$ such that $(\sC,e,([a],[b])) \simeq (\C/\Lambda(\tau), 0, (1,\tau))$. But Lemma \ref{unique_tau} shows that this is in fact not the case. In particular, a framed elliptic curve of the form $(\C/\Lambda(\tau), 0, (1,\tau))$ has no non-trivial automorphisms. 

\begin{lemma}\label{unique_tau}
The framed complex tori $(\C/\Lambda(\tau), 0, (1,\tau))$ and $(\C/\Lambda(\tau'), 0, (1,\tau'))$ are isomorphic if and only if $\tau=\tau'$.
\end{lemma}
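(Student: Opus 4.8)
The plan is to prove the non-trivial implication by showing that any isomorphism of framed elliptic curves $f : (\C/\Lambda(\tau),0,(1,\tau)) \to (\C/\Lambda(\tau'),0,(1,\tau'))$ lifts to the identity of $\C$, whence $\tau = \tau'$; the converse is immediate, since for $\tau = \tau'$ the identity of $\C$ descends to such an isomorphism. First I would forget the framings and apply the discussion following Corollary~\ref{autom_of_elliptic_curves_and_genus_one_Riemann_surfaces}: the underlying isomorphism of elliptic curves $(\C/\Lambda(\tau),0) \to (\C/\Lambda(\tau'),0)$ lifts, through the universal covers $\C \to \C/\Lambda(\tau)$ and $\C \to \C/\Lambda(\tau')$, to a biholomorphism of $\C$ fixing $0$; by the classification of automorphisms of $\C$ (Liouville's theorem), this lift is of the form $z \mapsto \alpha z$ for some $\alpha \in \C^*$, and comparing lattices gives $\alpha\,\Lambda(\tau) = \Lambda(\tau')$.

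Next I would bring the framings back in. Under the canonical identification $H_1(\C/L;\Z) \simeq L$ sending the class of a loop to the corresponding lattice vector, the distinguished cycles $\hat 1$ and $\hat\tau$ of Example~\ref{framed_lattices} correspond to $1$ and $\tau$ in $\Lambda(\tau)$, and similarly on the target. The induced map $f_*$ sends the class of $t \mapsto t \bmod \Lambda(\tau)$ to the class of $t \mapsto \alpha t \bmod \Lambda(\tau')$, so that $f_*(\hat 1) = \alpha$ and $f_*(\hat\tau) = \alpha\tau$ inside $\Lambda(\tau')$. Since $f$ is an isomorphism of \emph{framed} elliptic curves, we need $f_*(\hat 1) = \hat 1$ and $f_*(\hat\tau) = \hat{\tau'}$, i.e.\ $\alpha = 1$ and $\alpha\tau = \tau'$; together these yield $\tau = \tau'$, as desired.

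I do not anticipate a genuine obstacle: the argument is formal once the lifting statement of Corollary~\ref{autom_of_elliptic_curves_and_genus_one_Riemann_surfaces} is available. The one point deserving care is the bookkeeping of the homology identification and of how $f_*$ acts on the two distinguished cycles — in particular, noticing that it is the \emph{second} framing equation $\alpha\tau = \tau'$ that pins down $\tau = \tau'$, whereas the weaker condition $\alpha\,\Lambda(\tau) = \Lambda(\tau')$ with $\alpha = 1$ would only force $\tau$ and $\tau'$ to be $\SL(2,\Z)$-equivalent (for instance $\tau = i$ and $\tau' = i+1$ give the same lattice). This is, of course, the whole point of introducing framings: they rigidify the picture enough to kill the $\SL(2,\Z)$-ambiguity and, in passing, show that $(\C/\Lambda(\tau),0,(1,\tau))$ has no non-trivial automorphisms.
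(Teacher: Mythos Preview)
Your proof is correct and follows exactly the same approach as the paper's (one-sentence) proof: lift the isomorphism to $z \mapsto \alpha z$ with $\alpha\Lambda(\tau) = \Lambda(\tau')$, then use the framing condition to force $\alpha = 1$ and $\alpha\tau = \tau'$. You have simply spelled out more carefully the identification $H_1(\C/\Lambda;\Z)\simeq\Lambda$ and how $f_*$ acts on the distinguished cycles; one small quibble is that the lifting discussion you invoke actually appears just \emph{before} Corollary~\ref{autom_of_elliptic_curves_and_genus_one_Riemann_surfaces}, not after it.
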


\begin{proof}
An isomorphism between such complex tori is induced by $z \lmt \alpha z$ for some $\alpha\in\C^*$, which respects the framing if and only if $\alpha=1$ and $\tau'=\tau$.
\end{proof}

\begin{rem}
It is not entirely apparent from the proof, but the positivity condition $\mathrm{Im}\,(\tau) >0$ is necessary if we want a uniqueness result such as the one in Lemma \ref{unique_tau} to hold. Indeed, if we had defined a framing of $\Lambda$ simply as a basis of the $\Z$-module $\Lambda$, then, even if we normalise the lattice so that the first vector in the basis be equal to $1$, two normalised framings of the complex torus $(\C/\Lambda(\tau), 0)$ do not have to be equal: of the two bases $(1,\tau)$ and $(\tau,1) \sim \big(1,\frac{1}{\tau}\big)$ of the lattice $\Lambda(\tau) = \Z1\oplus\Z\tau = \Z\tau\oplus\Z 1$, only the first one satisfies the required positivity condition.
\end{rem}

We then have the following classification result for framed elliptic curves, whose proof follows directly from the discussion above and Lemma \ref{unique_tau}.

\begin{thm}\label{Teich_space_of_ell_curves}
Let $(\sC,e,([a],[b]))$ be a framed elliptic curve. Then there exists a unique 
$$
\tau\in \fh = \big\{ z \in \C \ | \ \mathrm{Im}\,(\tau)>0 \big\}
$$ 
and a unique isomorphism of framed elliptic curves 
$$ \big(\sC,e,([a],[b])\big) \simeq \big(\C/\Lambda(\tau), 0, (1,\tau)\big). $$ In particular, framed elliptic curves have no non-trivial automorphisms: $$\Aut(\sC,e,([a],[b])) = \{\id_{\sC}\}.$$
\end{thm}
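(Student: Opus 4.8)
The plan is to assemble the statement from three ingredients already available: the identification of a complex elliptic curve with its Jacobian (Theorem~\ref{isom_btw_ell_curve_and_its_Jacobian}), the normalisation procedure for framed lattices (Example~\ref{framed_lattices}), and the rigidity of normalised framed tori (Lemma~\ref{unique_tau}). Everything then reduces to carefully tracking what happens to a framing under these constructions.

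For \emph{existence}, fix a non-zero holomorphic $1$-form $\omega$ on $\sC$ and let $F\colon(\sC,e)\overset{\simeq}{\lra}(\C/\Lambda_\omega,0)$ be the isomorphism of elliptic curves of Theorem~\ref{isom_btw_ell_curve_and_its_Jacobian}, $x\lmt\int_e^x\omega\ \mathrm{mod}\ \Lambda_\omega$. Since $F^*(dz)=\omega$, the induced map in homology $F_*\colon H_1(\sC;\Z)\to H_1(\C/\Lambda_\omega;\Z)$ sends $([a],[b])$ to the pair of $1$-cycles corresponding, under the canonical identification $H_1(\C/\Lambda_\omega;\Z)\simeq\Lambda_\omega$, to $\lambda:=\int_a\omega$ and $\mu:=\int_b\omega$; in particular $(\lambda,\mu)$ is a $\Z$-basis of $\Lambda_\omega$, and the positivity condition~\eqref{positivity_cond} defining a framing says precisely that it is a direct basis in the sense of Example~\ref{framed_lattices}. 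Thus $F$ is an isomorphism of \emph{framed} elliptic curves $(\sC,e,([a],[b]))\simeq(\C/\Lambda_\omega,0,(\lambda,\mu))$. Letting $\tau\in\fh$ be the ratio attached to the direct basis $(\lambda,\mu)$ as in Example~\ref{framed_lattices}, the rescaling $z\lmt z/\lambda$ gives an isomorphism of framed elliptic curves $(\C/\Lambda_\omega,0,(\lambda,\mu))\simeq(\C/\Lambda(\tau),0,(1,\tau))$; composing with $F$ yields the desired isomorphism.

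For the \emph{uniqueness of $\tau$}, observe that if $(\sC,e,([a],[b]))$ were isomorphic as a framed elliptic curve both to $(\C/\Lambda(\tau),0,(1,\tau))$ and to $(\C/\Lambda(\tau'),0,(1,\tau'))$, then these two framed complex tori would be isomorphic to each other, whence $\tau=\tau'$ by Lemma~\ref{unique_tau}. The triviality of $\Aut(\sC,e,([a],[b]))$ is then the special case $\tau'=\tau$: transporting an automorphism across the isomorphism constructed above, it suffices to know that $(\C/\Lambda(\tau),0,(1,\tau))$ has no non-trivial automorphism, and this is exactly the computation in the proof of Lemma~\ref{unique_tau}, namely that an automorphism of $\C/\Lambda(\tau)$ fixing $0$ lifts to a biholomorphism of the universal cover $\C$ fixing $0$, hence has the form $z\lmt\alpha z$ with $\alpha\Lambda(\tau)=\Lambda(\tau)$, and compatibility with the framing forces $\alpha\cdot 1=1$. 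Finally, \emph{uniqueness of the isomorphism} is a formal consequence: any two isomorphisms of framed elliptic curves $(\sC,e,([a],[b]))\to(\C/\Lambda(\tau),0,(1,\tau))$ differ by an automorphism of the target, which we have just seen is the identity.

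The argument is essentially bookkeeping once Theorem~\ref{isom_btw_ell_curve_and_its_Jacobian} and Lemma~\ref{unique_tau} are in hand; the one place deserving care, and the main potential pitfall, is the compatibility of the framing with $F$, i.e.\ verifying that $F_*$ carries $[a],[b]$ to the cycles represented by $\int_a\omega,\int_b\omega\in\Lambda_\omega$ so that Condition~\eqref{positivity_cond} becomes the direct-basis condition, which requires keeping track of orientations and of the sign convention in the ratio defining $\tau$.
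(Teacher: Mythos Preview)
Your proof is correct and follows exactly the route the paper takes: the paper's own proof simply says the result ``follows directly from the discussion above and Lemma~\ref{unique_tau}'', where the discussion above is precisely Theorem~\ref{isom_btw_ell_curve_and_its_Jacobian} and Example~\ref{framed_lattices}. You have spelled out in detail what the paper leaves implicit, including the tracking of the framing under $F$ and the formal deduction of uniqueness of the isomorphism from triviality of the automorphism group; your closing caveat about sign conventions is well taken, as the paper itself is not entirely consistent between Definition~\ref{framed_ell_curve} and Example~\ref{framed_lattices} on whether the positivity condition involves $\int_a\omega/\int_b\omega$ or its reciprocal.
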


As a consequence, the points of the Riemann surface
$$ 
\fh := \{z\in\C \ |\ \mathrm{Im}\,(z) >0\} 
$$ represent isomorphism classes of framed elliptic curves. This parameter space of framed elliptic curves is called the \textit{Teichmüller space}\index{Teichmüller space of elliptic curves} of elliptic curves, for reasons that will be explained next. Note that Theorem \ref{Teich_space_of_ell_curves} constitutes an improvement on Theorem \ref{isom_btw_ell_curve_and_its_Jacobian}, since we can now compare two given elliptic curves in a more formal sense because (at least when they are framed) these elliptic curves are represented by points \textit{of a same space}, namely the upper half-plane $\fh$. The next step is to ask ourselves: when are two framed elliptic curves isomorphic as plain elliptic curves? This corresponds to forgetting the framing, so answering this question should also give us a sense of what the possible framings are on a given elliptic curve. More precisely, we will construct the set of isomorphism classes of elliptic curves as a quotient of the Teichmüller space $\fh$, so it will come equipped with a map from $\fh$ to it, and the fibres of that map will be in bijection with the possible framings (the map in question being the one that forgets the frame on a given framed elliptic curve).

\begin{thm}\label{modular_gp_action}
Let $(\sC,e,([a],[b]))$ and $(\sC',e',([a'],[b']))$ be framed elliptic curves and let $\tau, \tau' \in \fh$ be the corresponding points in the Teichmüller space of elliptic curves. Then the unframed elliptic curves $(\sC,e)$ and $(\sC',e')$ are isomorphic as elliptic curves if and only if there exists a matrix $$A=\begin{pmatrix} a & c \\ b & d \end{pmatrix} \in \SL(2;\Z) \text{ such that } \tau' = \frac{a\tau + b}{c\tau +d}\, ,$$ meaning that $\tau$ and $\tau'$ are related by a direct Möbius transformation with integral coefficients.
\end{thm}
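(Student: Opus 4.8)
The plan is to reduce everything to framed complex tori and then to elementary lattice linear algebra. By Theorem \ref{Teich_space_of_ell_curves}, we may replace $(\sC,e,([a],[b]))$ by the framed torus $(\C/\Lambda(\tau),0,(1,\tau))$ and $(\sC',e',([a'],[b']))$ by $(\C/\Lambda(\tau'),0,(1,\tau'))$. Since the statement only concerns the \emph{underlying unframed} curves, what remains is to decide when $(\C/\Lambda(\tau),0)$ and $(\C/\Lambda(\tau'),0)$ are isomorphic as elliptic curves. For this I would invoke the homothety criterion recalled just after Theorem \ref{isom_btw_ell_curve_and_its_Jacobian}: two complex tori $\C/\Lambda_1$ and $\C/\Lambda_2$ are isomorphic as elliptic curves if and only if there exists $\alpha\in\C^*$ with $\alpha\Lambda_1=\Lambda_2$. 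Thus the theorem becomes the equivalence: there exists $\alpha\in\C^*$ with $\alpha\Lambda(\tau)=\Lambda(\tau')$ if and only if there exists $A=\begin{pmatrix}a&c\\b&d\end{pmatrix}\in\SL(2;\Z)$ with $\tau'=\frac{a\tau+b}{c\tau+d}$.

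For the implication starting from such a matrix $A$, I would set $\alpha:=c\tau+d$, which is nonzero because $\tau\notin\R$. Then $\alpha\tau'=a\tau+b$ and $\alpha\cdot 1=c\tau+d$, so $\alpha\Lambda(\tau')=\Z(a\tau+b)\oplus\Z(c\tau+d)$; since $ad-bc=1$, the pair $(a\tau+b,c\tau+d)$ is again a $\Z$-basis of $\Lambda(\tau)=\Z1\oplus\Z\tau$, hence $\alpha\Lambda(\tau')=\Lambda(\tau)$ and the two tori — so also $(\sC,e)$ and $(\sC',e')$ — are isomorphic.

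For the converse, assume $\alpha\Lambda(\tau)=\Lambda(\tau')$ for some $\alpha\in\C^*$. Then $(\alpha,\alpha\tau)$ and $(1,\tau')$ are two $\Z$-bases of the same lattice $\Lambda(\tau')$, so there are integers $a,b,c,d$ with $ad-bc=\pm 1$ such that $\alpha\tau=a\tau'+b$ and $\alpha=c\tau'+d$, whence $\tau=\frac{a\tau'+b}{c\tau'+d}$. Taking imaginary parts and using the classical identity $\mathrm{Im}\!\left(\frac{a\tau'+b}{c\tau'+d}\right)=\frac{ad-bc}{|c\tau'+d|^2}\,\mathrm{Im}(\tau')$, the conditions $\mathrm{Im}(\tau)>0$ and $\mathrm{Im}(\tau')>0$ force $ad-bc=1$. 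Inverting this relation (the inverse matrix again lies in $\SL(2;\Z)$) expresses $\tau'$ as a direct Möbius transformation of $\tau$ with integral coefficients, which is the asserted form.

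I expect the only genuine care needed is bookkeeping: matching the paper's right-action convention $z\cdot\begin{pmatrix}a&c\\b&d\end{pmatrix}=\frac{az+b}{cz+d}$ against the change-of-basis matrix produced in the converse direction (one may need to transpose, or read the two basis relations in the correct order, to land on exactly the matrix shape appearing in the statement), together with the small point that $c\tau+d\neq 0$ precisely because $\tau$ lies off the real axis. Everything else is the homothety criterion for tori plus the determinant/imaginary-part computation, both of which are routine.
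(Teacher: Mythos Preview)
Your proof is correct and follows essentially the same route as the paper: reduce to complex tori via Theorem~\ref{Teich_space_of_ell_curves}, invoke the homothety criterion $\alpha\Lambda(\tau)=\Lambda(\tau')$, and use the sign of the imaginary part to force determinant $+1$. The only cosmetic differences are that the paper extracts $\tau'$ directly in terms of $\tau$ (writing $1=\alpha(c\tau+d)$ and $\tau'=\alpha(a\tau+b)$) rather than first obtaining $\tau$ in terms of $\tau'$ and then inverting, and that the paper leaves the easy ``if'' direction implicit.
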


\begin{proof}
In view of Theorem \ref{Teich_space_of_ell_curves}, the framed elliptic curves $(\sC,e,([a],[b]))$ and $(\sC',e',([a'],[b']))$ can be replaced by the framed elliptic curves $(\C/\Lambda(\tau),0,(1,\tau))$ and $(\C/\Lambda(\tau'),0,(1,\tau'))$ and the latter are isomorphic as unframed elliptic curves if and only if there exists $\alpha\in\C^*$ such that $\alpha \Lambda(\tau) = \Lambda(\tau')=\Z1\oplus\Z\tau'$. So there exist $a,b,c,d\in\Z$ such that $\tau' = \alpha(a\tau + b)$ and $1 = \alpha(c\tau +d)$. This implies that 
$$\alpha= \frac{1}{c\tau+d} \quad \text{and} \quad \tau' = \frac{a\tau +b}{c\tau+d}\,.$$ Applying the same argument to $ \Lambda(\tau)= \frac{1}{\alpha} \Lambda(\tau') $, we see that the (Möbius) transformation $z \lmt \frac{az + b}{cz+d}$ is bijective, so the matrix $\begin{pmatrix} a & c \\ b & d \end{pmatrix}$ is invertible, therefore has determinant $\pm1\in\Z$. And since it sends $\tau$  such that $\mathrm{Im}\,(\tau)>0$ to $\tau'$ such that $\mathrm{Im}\,(\tau')>0$, it must have determinant $+1$.
\end{proof}

So the framed elliptic curves $(\sC,e,([a],[b]))$ and $(\sC',e',([a'],[b']))$ become isomorphic when forgetting the frame if and only if the points $\tau, \tau' \in \fh$ lie in a same orbit of the $\SL(2;\Z)$-action on $\fh$ defined by 
\begin{equation}\label{action_mod_group}
z \cdot \begin{pmatrix}a & c \\ b & d \end{pmatrix} = \frac{az+b}{cz+d}\,.
\end{equation}
Note that this is a \textit{right action}. Thanks to this, we can construct a set whose points are in bijection with 
isomorphic classes of elliptic curves.

\begin{cor}\label{moduli_set}
The quotient set $\fh/\SL(2;\Z)$ is the set of isomorphism classes of elliptic curves and we have a forgetful map 
$$
F: \fh \lra \fh/\SL(2;\Z) 
$$ 
sending the isomorphism class of the framed elliptic curve $(\C/\Lambda(\tau),0,(1,\tau))$ to the isomorphism class of the elliptic curve $(\C/\Lambda(\tau),0)$. In particular, the orbit of a point $\tau\in\fh$ under the $\SL(2;\Z)$-action \eqref{action_mod_group} is in bijection with the set of all framings on the elliptic curve $(\C/\Lambda(\tau),0)$, and the stabiliser group of $\tau$ in $\SL(2;\Z)$ is isomorphic to the automorphism group of the elliptic curve $(\C/\Lambda(\tau),0)$.
\end{cor}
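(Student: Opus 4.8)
The plan is to extract every assertion from Theorems~\ref{Teich_space_of_ell_curves} and~\ref{modular_gp_action}, together with the remark (made just before Lemma~\ref{unique_tau}) that every complex elliptic curve admits a framing. The first step is to introduce the map
$$\Phi\colon\fh\lra\big\{\text{isomorphism classes of elliptic curves}\big\},\qquad \tau\lmt\big[(\C/\Lambda(\tau),0)\big],$$
which is the composite of the bijection $\tau\mapsto[(\C/\Lambda(\tau),0,(1,\tau))]$ of Theorem~\ref{Teich_space_of_ell_curves} with the operation that forgets the framing. One direction of Theorem~\ref{modular_gp_action} says that $\Phi$ is constant on the orbits of the right $\SL(2;\Z)$-action~\eqref{action_mod_group}, so it factors through a map $\overline\Phi\colon\fh/\SL(2;\Z)\to\{\text{isomorphism classes}\}$; the other direction says that $\overline\Phi$ is injective, and $\overline\Phi$ is surjective because any $(\sC,e)$ admits a framing, hence is isomorphic to some $(\C/\Lambda(\tau),0)$ by Theorem~\ref{Teich_space_of_ell_curves}. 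The bijection $\overline\Phi$ is the identification asserted in the statement, and under it the quotient map $\fh\to\fh/\SL(2;\Z)$ is exactly the forgetful map $F$, sending the point $\tau$ (i.e.\ the class of $(\C/\Lambda(\tau),0,(1,\tau))$) to the class of $(\C/\Lambda(\tau),0)$.

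Next, the fibre $F^{-1}(F(\tau))$ is, by construction, the orbit $\tau\cdot\SL(2;\Z)$, and I would match it with the framings of $(\C/\Lambda(\tau),0)$ as follows. To a framing $\beta$ of $(\C/\Lambda(\tau),0)$, Theorem~\ref{Teich_space_of_ell_curves} attaches the unique $\tau'\in\fh$ with $(\C/\Lambda(\tau),0,\beta)\simeq(\C/\Lambda(\tau'),0,(1,\tau'))$; forgetting the frame and applying Theorem~\ref{modular_gp_action} shows $\tau'\in\tau\cdot\SL(2;\Z)$. Conversely every $\tau'$ in the orbit is obtained this way, since an isomorphism $(\C/\Lambda(\tau'),0)\simeq(\C/\Lambda(\tau),0)$ provided by Theorem~\ref{modular_gp_action} transports the standard framing $(1,\tau')$ to a framing of $(\C/\Lambda(\tau),0)$. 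A short computation — writing a framing of $(\C/\Lambda(\tau),0)$ as a $\Z$-basis of $\Lambda(\tau)$ in the $\Z$-basis $(1,\tau)$ and testing the positivity condition~\eqref{positivity_cond} against $\omega=dz$ — identifies the set of such framings with $\SL(2;\Z)$ in such a way that the above assignment becomes $M\mapsto\tau\cdot M$. This map is onto the orbit, and one checks that two framings are sent to the same $\tau'$ precisely when they differ by an automorphism of $(\C/\Lambda(\tau),0)$; it therefore induces the asserted bijection once ``the set of all framings'' is read modulo isomorphism of framed elliptic curves (equivalently, as the set of isomorphism classes of framed elliptic curves lying over $F(\tau)$) — an adjustment made unavoidable by the non-trivial automorphisms of elliptic curves.

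For the final assertion I would specialise the computation in the proof of Theorem~\ref{modular_gp_action} to the case $\tau'=\tau$. An automorphism of $(\C/\Lambda(\tau),0)$ is multiplication by some $\alpha\in\C^*$ with $\alpha\Lambda(\tau)=\Lambda(\tau)$; writing $1=\alpha(c\tau+d)$ and $\tau=\alpha(a\tau+b)$ with $a,b,c,d\in\Z$ yields a matrix $A=\left(\begin{smallmatrix}a&c\\b&d\end{smallmatrix}\right)\in\SL(2;\Z)$ with $\tau\cdot A=\tau$, i.e.\ an element of the stabiliser $\mathrm{Stab}_{\SL(2;\Z)}(\tau)$. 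Conversely, $A=\left(\begin{smallmatrix}a&c\\b&d\end{smallmatrix}\right)\in\mathrm{Stab}_{\SL(2;\Z)}(\tau)$ gives $\alpha:=(c\tau+d)^{-1}$, and $\alpha\Lambda(\tau)=\Lambda(\tau)$ because $(a\tau+b,\,c\tau+d)$ is again a $\Z$-basis of $\Lambda(\tau)$ which $\alpha$ carries onto $(\tau,1)$. These two assignments are mutually inverse, and since $A_2$ fixes $\tau$ the automorphy-factor identity $j(A_1A_2,\tau)=j(A_1,\tau)\,j(A_2,\tau)$, with $j(A,\tau):=c\tau+d$, turns them into group isomorphisms; hence $\Aut(\C/\Lambda(\tau),0)\simeq\mathrm{Stab}_{\SL(2;\Z)}(\tau)$, and orbit--stabiliser re-confirms the ``size'' of the fibre of $F$.

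I do not expect a genuine obstacle here: everything is bookkeeping built on top of Theorems~\ref{Teich_space_of_ell_curves} and~\ref{modular_gp_action}. The one place that needs real care is keeping the several copies of $\SL(2;\Z)$ in play — framings of $(\C/\Lambda(\tau),0)$, the orbit of $\tau$, the stabiliser of $\tau$, and the automorphism group — matched consistently; this comes down to tracking the right action~\eqref{action_mod_group} and keeping straight which matrix implements a given change of basis of $\Lambda(\tau)$, and to being careful that ``the set of all framings'' is taken up to isomorphism, since otherwise the naive identification with the orbit fails as soon as $\mathrm{Stab}_{\SL(2;\Z)}(\tau)$ is non-trivial.
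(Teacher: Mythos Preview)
Your argument is correct. The paper does not give a separate proof of this corollary: it is stated immediately after Theorem~\ref{modular_gp_action} as a direct consequence of Theorems~\ref{Teich_space_of_ell_curves} and~\ref{modular_gp_action}, which is exactly the input you use. Your unpacking of the three assertions (the bijection $\fh/\SL(2;\Z)\simeq\{\text{isom.\ classes}\}$, the description of the fibres of $F$, and the identification $\mathrm{Stab}_{\SL(2;\Z)}(\tau)\simeq\Aut(\C/\Lambda(\tau),0)$ via the automorphy factor $j(A,\tau)=c\tau+d$) is accurate and more detailed than what the paper provides.

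You are also right to flag the wording of the second assertion. Literally, the set of framings of $(\C/\Lambda(\tau),0)$ is an $\SL(2;\Z)$-torsor, while the orbit $\tau\cdot\SL(2;\Z)$ has cardinality $|\SL(2;\Z)/\mathrm{Stab}(\tau)|$; since $-I_2$ always lies in the stabiliser, these cannot be in bijection unless one reads ``framings'' up to isomorphism of framed elliptic curves (equivalently, as the fibre of the forgetful map $\{\text{framed isom.\ classes}\}\to\{\text{isom.\ classes}\}$). That is the intended meaning in the paper, and your adjustment makes it precise.
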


\subsection{The modular curve}\index{modular!modular curve}\label{modular_curve_section}

The \emph{modular curve} is the quotient $\fh/\SL(2;\Z)$, of the Teichmüller space $\fh$ by the action of the \textit{modular group}\index{modular!modular group} $\SL(2;\Z)$. Note that Corollary \ref{moduli_set} tells us that this quotient set should be the underlying set of the moduli space of elliptic curves. 

\medskip

In this section, we recall what the canonical structure of Riemann surface on $\fh/\SL(2;\Z)$ is. But first, the mere fact that it is a quotient set already gives us insight into the classification of complex elliptic curves: by studying the action of the modular group on the Teichmüller space (stabilisers and fundamental domain), we can determine what the possible automorphism groups of complex elliptic curves are, and what the quotient may be as a Riemann surface, which turns out to be intimately related to the structure of the modular group itself. Note for instance that the centre of the modular group is the subgroup $\{\pm I_2\} \simeq \Z/2\Z$ and that this subgroup acts trivially on the Teichmüller space. So every elliptic curve admits an order two automorphism. This is already apparent in Theorem \ref{isom_btw_ell_curve_and_its_Jacobian}, which endows every Riemann surface of genus $1$ with a marked point with a (non-canonical) structure of Abelian group: the associated order-two automorphism is just the inversion map $D\lmt (-D)$ in this Abelian group. And if the elliptic curve is given by a smooth cubic equation $y^2z = f(x,z)$ in the projective plane, then the involution is induced by the involution $[x:y:z] \lmt [x:(-y):z]$ of $\CP^2$. This also shows that the action of $\SL(2;\Z)$ on $\fh$ is not faithful, as the centre of the modular group acts trivially on $\fh$. Hence an induced action of $\PSL(2;\Z) := \SL(2;\Z) / \{\pm I_2\}$, which turns out to be faithful. The important observation here is that the orbits of the $\PSL(2;\Z)$-action are the same as those of the $\SL(2;\Z)$-action but the stabilisers are not the same. In particular, the $\PSL(2;\Z)$-stabiliser of a point $\tau\in\fh$ is \emph{not} isomorphic to the automorphism group of the elliptic curve $(\C/\Lambda(\tau),0)$. The following result is classical (see for instance \cite[Chapter VII.1 pp.77--79]{Serre73} or \cite{KConradSL2} for an exposition).

\begin{thm}\label{structure_of_SL_2_Z}
The modular group $\SL(2;\Z)$ is generated by the matrices 
$$
S = \left(\begin{array}{rr} 0 & -1 \\ 1 & 0 \end{array}\right)
\quad \text{and} \quad T 
=  \left(\begin{array}{rr} 1 & 1 \\ 0 & 1  \end{array}\right)
\, ,
$$ hence also by the matrices $S$ and $ R:=ST =  \left( \begin{array}{rr} 0 & -1 \\ 1 & 1 \end{array} \right) $. This induces the following presentations of $\SL(2;\Z)$ and $\PSL(2;\Z)$ by generators and relations:
$$ \SL(2;\Z) \ \simeq \ \big\langle \, S, R \ \big| \  S^2 = R^3,\ S^4 = R^6 = 1\, \big\rangle \  \simeq \ \Z/4\Z \ast_{\Z/2\Z} \Z/6\Z$$
and
$$ \PSL(2;\Z) \ \simeq \ \big\langle \, [S], [R] \ \big| \ [S]^2 = [R]^3 = 1 \big\rangle \ \simeq \ \Z/2\Z \ast \Z/3\Z.$$
\end{thm}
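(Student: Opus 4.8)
The plan is to split the statement into a generation part---that $S,T$ (equivalently $S,R=ST$) generate $\SL(2;\Z)$---and a presentation part, and then to deduce the presentation of $\SL(2;\Z)$ from that of $\PSL(2;\Z)$ by means of the central extension $1\to\{\pm I_2\}\to\SL(2;\Z)\to\PSL(2;\Z)\to 1$.

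For generation I would use the Euclidean algorithm on matrix columns. Given $A=\begin{pmatrix} a & c\\ b & d\end{pmatrix}\in\SL(2;\Z)$, the entries $a,b$ of the first column are coprime because $ad-bc=1$; left multiplication by $T^{n}=\begin{pmatrix}1 & n\\ 0 & 1\end{pmatrix}$ replaces $(a,b)$ by $(a+nb,b)$ and left multiplication by $S=\begin{pmatrix}0 & -1\\ 1 & 0\end{pmatrix}$ replaces it by $(-b,a)$, so finitely many such moves bring the first column to $\begin{pmatrix}\pm 1\\ 0\end{pmatrix}$, after which $A$ has become $\pm T^{k}$ for some $k\in\Z$; since $-I_2=S^{2}$, this exhibits $A$ as a word in $S$ and $T$. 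Moreover $T=S^{-1}R$ with $S^{-1}=S^{3}\in\langle S\rangle$, so $\langle S,T\rangle=\langle S,R\rangle$. (Alternatively, and more in the spirit of the rest of the paper, one proves generation by checking that $\langle S,T\rangle$ already has $D=\{\tau\in\fh : |\mathrm{Re}\,\tau|\le\tfrac12,\ |\tau|\ge 1\}$ as a fundamental domain for its action on $\fh$ and comparing stabilisers of an interior point; this variant has the advantage of simultaneously producing the stabiliser computations at $i$ and $\rho$ that are used later.)

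A direct matrix computation gives $S^{2}=R^{3}=-I_2$, whence $S^{4}=R^{6}=I_2$, so $S$ has order $4$ and $R$ has order $6$; hence there is a homomorphism $\phi$ from the abstract group $\Gamma:=\langle s,r\mid s^{2}=r^{3},\ s^{4}=1\rangle$ to $\SL(2;\Z)$ with $\phi(s)=S$ and $\phi(r)=R$, surjective by the generation step (and here the relation $r^{6}=1$ is automatic, while $\Gamma$ is visibly the amalgam $\Z/4\Z\ast_{\Z/2\Z}\Z/6\Z$, the two copies of $\Z/2\Z$ being $\langle s^{2}\rangle$ and $\langle r^{3}\rangle$). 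It remains to prove that $\phi$ is injective. The element $z:=s^{2}=r^{3}$ is central in $\Gamma$ (it commutes with $s$ and with $r$) and of order exactly $2$ (it squares to $1$, and $\phi(z)=-I_2\ne I_2$), so $\phi$ defines a morphism from the central extension $1\to\langle z\rangle\to\Gamma\to\Gamma/\langle z\rangle\to 1$ to the central extension $1\to\{\pm I_2\}\to\SL(2;\Z)\to\PSL(2;\Z)\to 1$, inducing an isomorphism $\langle z\rangle\xrightarrow{\sim}\{\pm I_2\}$ on the left. Since $\Gamma/\langle z\rangle$ has presentation $\langle s,r\mid s^{2}=r^{3}=1\rangle\simeq\Z/2\Z\ast\Z/3\Z$, the five lemma shows that $\phi$ is an isomorphism as soon as the induced map $\Z/2\Z\ast\Z/3\Z\to\PSL(2;\Z)$ sending the two generators to $[S]$ and $[R]$ is one.

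This last point is the heart of the matter, and the step I expect to be the main obstacle: surjectivity of $\Z/2\Z\ast\Z/3\Z\to\PSL(2;\Z)$ is immediate from the generation step, but its injectivity---the assertion that $[S]$ and $[R]$ satisfy no relation beyond the consequences of $[S]^{2}=[R]^{3}=1$---requires genuine geometric input. I would obtain it from a ping-pong argument: let $\PSL(2;\Z)$ act on the boundary circle $\RP^{1}=\partial\fh$ (or on $\fh$ itself, or on $\R\setminus\Q$), and exhibit disjoint non-empty subsets $X_1,X_2$ with $[R]^{\pm 1}\cdot X_2\subseteq X_1$ and $[S]\cdot X_1\subseteq X_2$, so that---by the ping-pong lemma, the factor $\langle[R]\rangle$ having order $\ge 3$---any non-trivial reduced word alternating between $\langle[R]\rangle\setminus\{1\}$ and $\langle[S]\rangle\setminus\{1\}$ sends a suitably chosen test point out of the set where it started and is therefore non-trivial. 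Equivalently, one may invoke Poincaré's polygon theorem for the ideal triangle $D$: its side-pairings are $[S]$ and $[T]$, and its only cone points, of order $2$ at $i$ and order $3$ at $\rho$, yield exactly the relations $[S]^{2}=([S][T])^{3}=1$; or one may run the Bass--Serre argument using the action of $\PSL(2;\Z)$ on the tree of the Farey tessellation. In every case it is this geometric or dynamical input, rather than any further matrix manipulation, that separates "generates" from "is presented by". All of this is classical; a complete treatment can be found in \cite{Serre73}.
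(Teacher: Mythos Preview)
Your proposal is correct and follows the standard classical route. Note, however, that the paper does not actually supply a proof of this theorem: it states the result as classical and refers the reader to \cite[Chapter VII.1, pp.77--79]{Serre73} and \cite{KConradSL2}, then follows with a paragraph of explanatory remarks (orders of $S$ and $R$, the relation $S^2=R^3=-I_2$, and the interpretation as an amalgamated product) rather than an argument. Your outline---Euclidean-algorithm generation, verification of the relations, reduction to $\PSL(2;\Z)$ via the central extension, and ping-pong (or Poincar\'e's polygon theorem, or Bass--Serre on the Farey tree) for the free-product structure---is precisely what one finds in those references, and your identification of the injectivity step as the one requiring genuine geometric input is accurate.
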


Theorem \ref{structure_of_SL_2_Z} is about the algebraic structure of $\SL(2;\Z)$ and $\PSL(2;\Z)$. The matrices $S$ and $R$ show that this group is generated by an element of order four and an element of order six, which themselves satisfy the relation $S^2=R^3$ (equal to $-I_2$). So $\SL(2;\Z)$ is the amalgamated coproduct of the two Abelian groups $\Z/4\Z$ and $\Z/6\Z$ over $\Z/2\Z$, with the non-trivial element of $\Z/2\Z$ being sent to $S^2$ in $\langle S \rangle \simeq \Z/4\Z$ and to $R^3$ in $ \langle R \rangle \simeq \Z/6\Z$. As a consequence, the group $\PSL(2;\Z) = \SL(2;\Z) / \{\pm I_2\}$ is generated by the classes $[S]$ and $[R]$, which are respectively of order $2$ and $3$ with no other relations between them, so $\PSL(2;\Z)$ is the coproduct of $\Z/2\Z$ and $\Z/3\Z$.  This implies for instance that a character $\chi:\SL(2;\Z)\lra \C^*$ has image contained in the subgroup generated by a fourth root of unity and a sixth root of unity, so $\mathrm{Im}\, \chi \subset \mu_{12}(\C)$. Geometrically, though, it is the action of the generators $S$ and $T$  that is the most convenient to visualise. Indeed, $S$ acts on $\fh$ via the Möbius transformation $[S]: z \lmt -\textstyle\frac{1}{z}$, which is a direct rotation of angle $\frac{\pi}{2}$ about $i$, and $T$ acts via $[T]: z \lmt z+1$, which is a translation. As a consequence, $R=ST$ acts via $[R]:z\lmt \frac{1}{1-z}$, which is a rotation of angle $\frac{\pi}{3}$ about $e^{i\pi/3}$. This leads to the usual construction of a fundamental domain for the action of $\SL(2;\Z)$ (or equivalently $\PSL(2;\Z)$, since the orbits are the same) on $\fh$.

\begin{thm}
Let 
$$
\cF := \big\{ \tau \in \fh \ | \ |\tau| \geq 1 \text{ and } \textstyle \frac{1}{2} \leq \mathrm{Re}\,(\tau) \leq \textstyle\frac{1}{2} \big\}\,.
$$ 
Then the following properties hold:
\begin{enumerate}
\item Every $\SL(2;\Z)$-orbit in $\fh$ intersects $\cF$ in at least one point and at most two points.
\item If an orbit intersects $\cF$ in two points, then these two points lie on the topological boundary $\partial \cF$ of $\cF$. 
\item The points of $\cF$ with non-trivial stabiliser in $\SL(2;\Z)$ are the points $i = e^{i\pi/2}$, whose stabiliser is the cyclic group of order $4$ generated by $S$, and the points $j = e^{i2\pi/3}$ and $-j^2 = -\frac{1}{j} = e^{i\pi/3} = j \cdot T$, whose stabilisers are cyclic groups of order $6$, generated respectively by $TRT^{-1} = TS$ and $R$.
\end{enumerate}
\end{thm}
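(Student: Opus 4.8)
The plan is to run the classical argument (see e.g.\ \cite{Serre73}), whose two ingredients are the transformation formula for the imaginary part, namely
$$\mathrm{Im}\left(\tau\cdot\begin{pmatrix} a & c \\ b & d \end{pmatrix}\right) = \frac{\mathrm{Im}(\tau)}{|c\tau + d|^2}\,,$$
and the fact, recalled in Theorem \ref{structure_of_SL_2_Z}, that $\SL(2;\Z)$ is generated by $S$ and $T$, with $S$ of order $4$ and $R:=ST$ of order $6$; the latter will be used to identify the stabilisers.

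\emph{Part (1), existence.} Fix $\tau\in\fh$. The imaginary part attains a maximum on the orbit $\tau\cdot\SL(2;\Z)$: indeed $\mathrm{Im}(\tau\cdot\gamma)$ depends only on the pair $(c,d)$ appearing in $\gamma$, and $|c\tau+d|$ attains a positive minimum over $(c,d)\in\Z^2\setminus\{0\}$ because $\Z\tau\oplus\Z$ is a lattice in $\C$. Let $\tau'$ be a point of the orbit with $\mathrm{Im}(\tau')$ maximal; applying a suitable power of $T$ (which preserves the imaginary part) we may assume $|\mathrm{Re}(\tau')|\leq\tfrac12$, and if we had $|\tau'|<1$ then $\tau'\cdot S$ would have strictly larger imaginary part, a contradiction. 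Hence $\tau'\in\cF$.

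\emph{Parts (1)--(3), uniqueness and stabilisers.} Suppose $\tau$ and $\tau'=\tau\cdot\gamma$ both lie in $\cF$, with $\gamma=\begin{pmatrix} a & c \\ b & d\end{pmatrix}\in\SL(2;\Z)$; after possibly swapping the two points (i.e.\ replacing $\gamma$ by $\gamma^{-1}$) we may assume $\mathrm{Im}(\tau')\geq\mathrm{Im}(\tau)$, i.e.\ $|c\tau+d|\leq1$. Since $\tau\in\cF$ forces $\mathrm{Im}(\tau)\geq\tfrac{\sqrt3}{2}$ and $|c\tau+d|\geq|c|\,\mathrm{Im}(\tau)$, we get $|c|\leq\tfrac{2}{\sqrt3}$, hence $c\in\{0,\pm1\}$. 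If $c=0$, then $\gamma=\pm\begin{pmatrix}1 & b\\0 & 1\end{pmatrix}$ with $|b|\leq1$ (both real parts lying in $[-\tfrac12,\tfrac12]$): $b=0$ gives $\tau=\tau'$, while $b=\pm1$ forces $\mathrm{Re}(\tau)=\mp\tfrac12$ and $\mathrm{Re}(\tau')=\pm\tfrac12$, two distinct points of $\partial\cF$. If $c=\pm1$ we may assume $c=1$ (since $-I_2$ acts trivially); then $|\tau+d|\leq1$ together with $\mathrm{Im}(\tau)\geq\tfrac{\sqrt3}{2}$ and $|\mathrm{Re}(\tau)|\leq\tfrac12$ forces $(\mathrm{Re}(\tau)+d)^2\leq\tfrac14$, hence $d\in\{0,\pm1\}$, and a short computation in each sub-case shows that $|\tau|=1$, with $\tau=j=e^{2i\pi/3}$ when $d=1$ and $\tau=-j^2=e^{i\pi/3}$ when $d=-1$. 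Recording the matrix $\gamma$ that arises in each sub-case simultaneously shows that any second intersection point of an orbit with $\cF$ lies on $\partial\cF$ (part (2)) and produces the list of points with non-trivial stabiliser: at $i$ one gets $\gamma\in\langle S\rangle\simeq\Z/4\Z$ (recall $S^2=-I_2$), at $e^{i\pi/3}$ one gets $\gamma\in\langle R\rangle$ with $R=ST$ of order $6$, and at $j$ one gets $\gamma\in\langle TRT^{-1}\rangle=\langle TS\rangle$, again of order $6$; no other $\tau\in\cF$ is fixed by any $\gamma\notin\{\pm I_2\}$. (The stabiliser of any point contains $\{\pm I_2\}$, which acts trivially, so ``non-trivial stabiliser'' here means ``stabiliser strictly larger than $\{\pm I_2\}$'', equivalently non-trivial stabiliser for the induced $\PSL(2;\Z)$-action.)

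\emph{Main obstacle and remarks.} The only delicate point is the bookkeeping in the case $c=\pm1$: one must run through the three values of $d$ --- and, for $d=0$, the possible values of $a$ --- write down the matrix $\gamma$ occurring in each instance, and then check by direct computation (conveniently using $j^2+j+1=0$ and $j^3=1$) which of these matrices actually fix the relevant point and that they generate a cyclic group of the stated order; everything else reduces to the imaginary-part estimate above. Finally, feeding the stabiliser computation into Corollary \ref{moduli_set} recovers the automorphism groups of the corresponding complex tori, which refines Corollary \ref{autom_of_elliptic_curves_and_genus_one_Riemann_surfaces}; we will return to this in Corollary \ref{autom_of_elliptic_curves_and_genus_one_Riemann_surfaces_again}.
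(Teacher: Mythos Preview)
The paper does not actually prove this theorem: it is stated without proof, the surrounding discussion referring the reader to \cite{Serre73} and \cite{KConradSL2} for the closely related Theorem~\ref{structure_of_SL_2_Z}. Your argument is precisely the classical one from \cite[Chapter~VII.1]{Serre73} that those references contain, and it is correct; so there is nothing to compare, and your write-up could serve as the missing proof.

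One small comment on exposition: in the case $c=\pm1$, $d=0$ you only say that $|\tau|=1$ and defer the rest to ``a short computation''. It is worth spelling out that for $d=0$ one gets $\tau'=a-\tfrac{1}{\tau}$ with $a\in\{-1,0,1\}$, and that this is exactly where the fixed point $i$ (for $a=0$, giving $\gamma=\pm S$) and the additional identifications $j\leftrightarrow -j^2$ on the unit arc appear; as written, a reader might wonder where $i$ enters the case analysis, since you only name it in the conclusion. Also, your parenthetical clarification that ``non-trivial stabiliser'' must mean ``strictly larger than $\{\pm I_2\}$'' is well taken---the paper's phrasing is indeed slightly loose on this point.
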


Note that the point at infinity of $\CP^1=\C \cup \{\infty\}$ may be added to $\fh \subset \C$ and that the action of $\SL(2;\Z)$ on $\fh$ by biholomorphisms extends to a conformal action on $\hat{\fh}:=\fh\cup\{\infty\}$. The stabiliser of the point at infinity is then the infinite cyclic group generated by $T$. Algebraically, the point at infinity no longer corresponds to an elliptic curve (smooth plane cubic with a marked point) but to a nodal plane cubic with a marked point (given in an affine chart by an equation of the form $y^2 = x^2(x-1)$).

\begin{cor}\label{autom_of_elliptic_curves_and_genus_one_Riemann_surfaces_again}
Let $\tau\in\cF$ and let $\Lambda(\tau):=\Z1\oplus \Z\tau$ be the associated lattice in $\C$. Let $G := \Aut(\C/\Lambda(\tau),0)$ be the automorphism group of the elliptic curve $(\C/\Lambda(\tau),0)$. Then the following properties hold:
\begin{enumerate}
\item If $\tau\neq i, j, -j^2$, then $G\simeq\Z/2\Z$.
\item If $\tau=i$, then $G \simeq \Z/4\Z$.
\item If $\tau = j$ or $-j^2$, then $G \simeq \Z/6\Z$.
\end{enumerate}
\end{cor}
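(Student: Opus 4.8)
The plan is to reduce the statement to the computation of the point stabilizers for the $\SL(2;\Z)$-action on $\fh$, most of which has already been done. By Corollary~\ref{moduli_set}, the automorphism group $G=\Aut(\C/\Lambda(\tau),0)$ is isomorphic to the stabilizer $\operatorname{Stab}_{\SL(2;\Z)}(\tau)$ of $\tau$ for the right action \eqref{action_mod_group}; equivalently, as recalled earlier, $G$ is isomorphic to the finite cyclic group $\{\alpha\in\C^{*} : \alpha\Lambda(\tau)=\Lambda(\tau)\}\subset S^{1}$. The first observation is that $-I_{2}\in\SL(2;\Z)$ acts trivially on $\fh$, so $\{\pm I_{2}\}\subseteq\operatorname{Stab}_{\SL(2;\Z)}(\tau)$ for every $\tau\in\fh$; in the lattice picture this is the subgroup $\{\pm 1\}$, i.e.\ the inversion automorphism $x\mapsto -x$, which is non-trivial. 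Hence $G$ always contains a copy of $\Z/2\Z$, and the number $n$ from Corollary~\ref{autom_of_elliptic_curves_and_genus_one_Riemann_surfaces} satisfies $n\geq 2$.

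Next I invoke the preceding theorem, which lists the stabilizers of the points of the fundamental domain $\cF$. If $\tau\in\cF$ and $\tau\notin\{i,j,-j^{2}\}$, that theorem tells us the stabilizer of $\tau$ is no larger than the centre $\{\pm I_{2}\}$ (this is what it means for $\tau$ to have ``trivial stabilizer'' once one remembers that $-I_{2}$ stabilizes every point), so $\operatorname{Stab}_{\SL(2;\Z)}(\tau)=\{\pm I_{2}\}$ and $G\simeq\Z/2\Z$, which is case (1). If $\tau=i$, the stabilizer is the cyclic group of order $4$ generated by $S$ (note $S^{2}=-I_{2}$), so $G\simeq\Z/4\Z$, which is case (2). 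If $\tau=j$, resp.\ $\tau=-j^{2}$, the stabilizer is the cyclic group of order $6$ generated by $TS$, resp.\ by $R$ (with cube $-I_{2}$ in each case), so $G\simeq\Z/6\Z$, which is case (3).

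For completeness, and to reconnect with the fact recalled earlier that a generator of $G$ is always a root of unity, cases (2) and (3) can be double-checked directly in the lattice model: $\Lambda(i)=\Z[i]$ is the ring of Gaussian integers, whose unit group is $\mu_{4}(\C)=\{\pm1,\pm i\}$, while $\Lambda(j)=\Z[e^{2\pi i/3}]=\Lambda(-j^{2})$ is the ring of Eisenstein integers, whose unit group is $\mu_{6}(\C)$, generated by the primitive sixth root of unity $-j^{2}=e^{i\pi/3}$. Conversely, if $\alpha=e^{i\theta}$ preserves $\Lambda(\tau)$, then multiplication by $\alpha$ is represented in the basis $(1,\tau)$ by a matrix $M\in\SL(2;\Z)$ with $\operatorname{tr}M=\alpha+\bar\alpha=2\cos\theta\in\Z$, forcing $\theta\in\{0,\tfrac{\pi}{3},\tfrac{\pi}{2},\tfrac{2\pi}{3},\pi\}$; this both recovers $n\in\{2,4,6\}$ and, together with the fact that a lattice stable under $\mu_{4}$ (resp.\ $\mu_{6}$) is homothetic to $\Z[i]$ (resp.\ to $\Z[e^{2\pi i/3}]$) because these rings are principal ideal domains, shows that $n>2$ happens precisely for $\tau=i$ and $\tau\in\{j,-j^{2}\}$. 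With the real content contained in the already-established structure of $\cF$, the only point that genuinely requires attention is the distinction between $n=1$ and $n=2$: one must use that the stabilizer of a ``generic'' $\tau\in\cF$, although trivial modulo the centre, still contains $-I_{2}$ — which is exactly the statement that a complex elliptic curve never has trivial automorphism group.
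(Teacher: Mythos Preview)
Your proof is correct and follows precisely the route the paper intends: the corollary is stated without proof because it is meant to be immediate from Corollary~\ref{moduli_set} (identifying $\Aut(\C/\Lambda(\tau),0)$ with the $\SL(2;\Z)$-stabiliser of $\tau$) together with the preceding theorem listing the stabilisers in $\cF$. Your additional remarks on the unit groups of $\Z[i]$ and $\Z[e^{2\pi i/3}]$ and the trace argument are a pleasant independent check but are not needed for the argument.
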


\noindent The shape of the fundamental domain $\cF= $ gives us an indication of what the quotient space $\fh/\PSL(2;\Z)$ might look like: a topological space homeomorphic to $\C$, and this identification can be used to put a Riemann surface structure on $\fh/\PSL(2;\Z)$. Thinking about $\fh/\PSL(2;\Z)$ in terms of $\cF$ also suggests that there are three special points, namely $i$, $j$ and $-j^2$, which are the points with non-trivial stabiliser in $\PSL(2;\Z)$, and this makes $\fh/\PSL(2;\Z)$ an orbifold in the sense of Thurston. We will now show that $\fh / \PSL(2;\Z)$ is also the quotient of a Riemann surface by the action of a \emph{finite} group (Theorem \ref{finite_pres}).

\begin{rem} We will recall in Corollary \ref{modular_orbi_curve} why $\SL(2;\Z)$ acts properly on $\fh$. By Theorem \ref{structure_of_SL_2_Z}, the group $\SL(2;\Z) $ is a finitely generated subgroup of $\GL(2;\C)$. So, by Selberg's Lemma, it contains a torsion-free normal subgroup of finite index, say $\pi \lhd \SL(2;\Z)$. The discrete group $\pi$, being a subgroup of $\SL(2;\Z)$, acts properly on $\fh$. Since $\pi$ is torsion-free, it must therefore act freely on $\fh$ (see Theorems \ref{orbifold_quotient_stacks} and  \ref{quotient_by_finite_and_discrete_group} for further details on that). So the canonical projection $\fh \lra \fh/\pi =: \tilde{M}$ is an analytic covering map between Riemann surfaces, and the topological space $M := \fh/\SL(2;\Z)$ is homeomorphic to $\tilde{M} / \Ga$, where $\Ga:= \SL(2;\Z) / \pi$ is a finite group. A priori, there could exist several pairs $(\tilde{M},\Ga)$ consisting of a Riemann surface $\tilde{M}$ and a finite group $\Ga$ acting on $M$ by automorphisms, such that $\tilde{M}/\Ga \simeq M$. An explicit such presentation is given in Theorem \ref{finite_pres}.
\end{rem}

\begin{thm}\label{finite_pres}
The set $M := \fh/\SL(2;\Z)$ of isomorphism classes of elliptic curves is in bijection with the set $\tilde{M}/\Ga$ where $\tilde{M} = \C\setminus \{0;1\}$ and $\Ga$ is the permutation group $\mathfrak{S}_3 = \left\langle \sigma, \tau\right\rangle $, generated by the permutations $\sigma = (1\ 2)$ and $\tau= (1\ 3)$ and acting to the right on $u \in \C\setminus\{0;1\}$ via $f_\sigma(u) := (1-u)$ and $f_\tau(u) := \textstyle \frac{1}{u}\,$.
\end{thm}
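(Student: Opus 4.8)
The plan is to realise both $M$ and $\tilde M/\Ga$ as the set of isomorphism classes of complex elliptic curves, using the Legendre family of Example~\ref{Legendre_family} as the intermediary. By Corollary~\ref{moduli_set}, the set $M=\fh/\SL(2;\Z)$ \emph{is} the set of isomorphism classes of elliptic curves, so it is enough to exhibit a bijection between this set and $\tilde M/\Ga$ with $\tilde M=\C\setminus\{0,1\}$. I would start from the map
$$
\Phi\colon \C\setminus\{0,1\}\lra \big\{\text{elliptic curves}\big\}\big/{\simeq}\,,\qquad u\lmt \big[(\sC_u,e)\big]\,,
$$
which is well defined since $(\sC_u,e)$ is an elliptic curve for every $u\in\C\setminus\{0,1\}$ (Example~\ref{Legendre_family}). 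The theorem then reduces to two statements: \textbf{(i)} $\Phi$ is surjective, and \textbf{(ii)} the fibres of $\Phi$ are exactly the orbits of the group $\Ga=\mathfrak{S}_3=\langle f_\sigma,f_\tau\rangle$ acting on $\C\setminus\{0,1\}$ by $f_\sigma(u)=1-u$ and $f_\tau(u)=1/u$. Granting these, $\Phi$ passes to a bijection $\tilde M/\Ga\overset{\simeq}{\lra}M$, which is the assertion. (That $\langle f_\sigma,f_\tau\rangle$ is a copy of $\mathfrak{S}_3$ acting effectively on $\C\setminus\{0,1\}$ is a routine check: $f_\sigma$ and $f_\tau$ are distinct involutions and $f_\sigma\circ f_\tau$ has order three.)

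For surjectivity \textbf{(i)} I would invoke the discussion of the Weierstrass embedding recalled between Example~\ref{cx_tori} and Example~\ref{Legendre_family}, together with Theorem~\ref{isom_btw_ell_curve_and_its_Jacobian}: an arbitrary elliptic curve $(\sC,e)$ is isomorphic to the complex torus $\C/\Lambda$ furnished by its period lattice, and the Weierstrass $\wp$-function of $\Lambda$ embeds $\C/\Lambda$ in $\CP^2$ as a smooth plane cubic of the form $y^2=4(x-v_1)(x-v_2)(x-v_3)$ with $v_1,v_2,v_3$ pairwise distinct (the $v_i$ being the values of $\wp$ at the three non-zero $2$-torsion points), the marked point $e$ corresponding to the inflection point at infinity. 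The affine substitution $x\mapsto (x-v_1)/(v_2-v_1)$ together with a rescaling of $y$ is a projective transformation of $\CP^2$ carrying this cubic onto $\sC_u$, with $u=(v_3-v_1)/(v_2-v_1)\in\C\setminus\{0,1\}$, and preserving the marked point; hence $[(\sC,e)]=\Phi(u)$.

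For \textbf{(ii)} I would prove the two inclusions between fibres and orbits separately. The inclusion ``orbit $\subseteq$ fibre'' only requires exhibiting explicit isomorphisms of elliptic curves for the two generators: $[x:y:z]\lmt[z-x:iy:z]$ is an isomorphism $(\sC_u,e)\simeq(\sC_{1-u},e)$ and $[x:y:z]\lmt[x:u^{-1/2}y:uz]$ is an isomorphism $(\sC_u,e)\simeq(\sC_{1/u},e)$ (both are projective transformations fixing the point at infinity, as one checks by substituting into the defining equations), so $\Phi$ is constant on $\Ga$-orbits. For the reverse inclusion, let $\phi\colon(\sC_u,e)\overset{\simeq}{\lra}(\sC_{u'},e)$ be an isomorphism of elliptic curves. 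By Lemma~\ref{gp_structure}, $\phi$ is in particular a homomorphism of the underlying Abelian groups, hence commutes with the inversion involution $[x:y:z]\mapsto[x:-y:z]$; since the ramified double covering $p_u\colon\sC_u\to\CP^1$, $[x:y:z]\mapsto[x:z]$, of the discussion following Example~\ref{Legendre_family} is precisely the quotient of $\sC_u$ by that involution (whose fixed locus is the $2$-torsion, mapping onto the four branch points $0,1,u,\infty$), the map $\phi$ descends to an automorphism $\bar\phi$ of $\CP^1$, i.e.\ a Möbius transformation, which fixes the common image of the marked point and carries $\{0,1,u,\infty\}$ bijectively onto $\{0,1,u',\infty\}$. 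Enumerating the Möbius transformations with these properties — equivalently, the affine normalisations taking the triple $\{0,1,u\}$ to a triple of the form $\{0,1,\ast\}$ — shows that $u'$ must be one of the six values $u,\ 1-u,\ 1/u,\ (u-1)/u,\ 1/(1-u),\ u/(u-1)$, that is, $u'\in\Ga\cdot u$.

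The step I expect to be the main obstacle is the reverse inclusion in \textbf{(ii)}, and within it the structural point that the degree-two map $p_u\colon\sC_u\to\CP^1$ is exactly the quotient by the inversion involution, so that \emph{every} isomorphism of elliptic curves between two Legendre curves descends along these covers to a Möbius transformation of $\CP^1$. Once that is secured, the remainder is the elementary enumeration of Möbius transformations that permute four prescribed points while fixing one of them, which reproduces the anharmonic group; surjectivity, by contrast, is essentially already contained in the earlier discussion of the Weierstrass embedding \eqref{embedding_cx_torus_in_proj_plane}.
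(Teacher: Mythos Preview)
Your proof is correct and covers the same ground as the paper's, but with a different organisation. The paper goes from $\fh$ down: it embeds $\C/\Lambda(\tau)$ via Weierstrass as $y^2=4(x-v_1)(x-v_2)(x-v_3)$, normalises via the affine map sending $(v_1,v_2)\mapsto(1,0)$ to extract the cross-ratio $u=(v_3-v_2)/(v_1-v_2)$, and then observes that permuting the ordered triple $(v_1,v_2,v_3)$ --- an $\mathfrak S_3$ worth of choices, none of which changes the cubic --- yields precisely the action $u\mapsto 1-u$, $u\mapsto 1/u$. You go from $\C\setminus\{0,1\}$ up, via the Legendre family, and give a more fine-grained argument: explicit projective isomorphisms for the inclusion ``orbit $\subseteq$ fibre'', and for the reverse inclusion the structural observation that every isomorphism of elliptic curves commutes with the hyperelliptic involution and hence descends to a M\"obius transformation permuting the four branch points. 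The paper is terse on exactly that reverse inclusion (it asserts but does not really argue that isomorphic $\tau,\tau'$ yield $\mathfrak S_3$-related $u,u'$), so your decomposition buys a genuinely complete argument; the paper's buys brevity and the conceptual picture that $u$ is literally the cross-ratio of the ordered branch locus --- which is, of course, the same picture your M\"obius enumeration recovers.

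One small wrinkle: in Example~\ref{Legendre_family} the marked point is $e=[0:0:1]$, not the inflection point at infinity, and your explicit maps do not fix $[0:0:1]$ (for instance $[x:y:z]\mapsto[z-x:iy:z]$ sends it to $[1:0:1]$). This is harmless --- either compose with a translation on the target, or switch throughout to $e=[0:1:0]$; both choices of $e$ are $2$-torsion points, so the hyperelliptic involution, and hence the whole ``fibre $\subseteq$ orbit'' step, is unchanged --- but you should make the convention consistent.
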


\begin{proof}
As seen after Example \ref{cx_tori}, for all $\tau\in\fh$, the framed elliptic curve $(\C/\Lambda(\tau),0,(1,\tau))$ can be embedded onto the smooth plane cubic of equation 
$$y^2 = 4 (x - v_1z) (x-v_2z) (x-v_3z)$$
in $\CP^2$, with pairwise distinct coefficients
$$
v_1 = \wp_{\Lambda(\tau)}\left(\textstyle\frac{1}{2}\right), \  v_2 = \wp_{\Lambda(\tau)}\left(\textstyle\frac{\tau}{2}\right) \ \text{and} \ v_3 = \wp_{\Lambda(\tau)}\left(\textstyle\frac{1+\tau}{2}\right)\in\C\,.
$$ Let us set $u := \textstyle \frac{v_3 - v_2}{v_1 -v_2} \in \C\setminus\{0;1\}$. This corresponds to sending $(v_1,v_2,v_3)$ to $(1,0,u)$ via the homographic transformation $z \lmt \textstyle \frac{z-v_2}{v_1-v_2}$, preserving $\infty\in\CP^1$. So we may also view $u$ as a point of $\CP^1\setminus\{0;1;\infty\}$, with limit cases $u=0,1,\infty$ corresponding respectively to $v_3 = v_2$, $v_3=v_1$ and $v_1=v_2$. The point is that, if $v_1$ and $v_2$ are tranposed by $\sigma = (1\ 2)\in\mathfrak{S}_3$, then $u$ is changed to $f_\sigma(u) = 1-u$, and if $v_1$ and $v_3$ are transposed by $\tau = (1\ 3)\in\mathfrak{S}_3$, then $u$ is changed to $f_\tau(u) = \textstyle \frac{1}{u}$. In fact, the group $\left\langle f_\sigma, f_\tau\right\rangle$ is the group of automorphisms of $\CP^1$ which preserve the subset $\{0;1;\infty\}$. Note that this defines a right action of $\mathfrak{S}_3$ on $\CP^1\setminus\{0;1;\infty\}$ because $f_{\sigma\tau} = f_\tau \circ f_\sigma$. Moreover, if $\tau' \in\fh$, then the corresponding $u'\in \CP^1\setminus\{0;1;\infty\}$ lies in the same $\mathfrak{S}_3$-orbit as $u$ if $\C/\Lambda(\tau) \simeq \C/\Lambda(\tau')$ as elliptic curves. In view of Theorem \ref{modular_gp_action}, this occurs if and only if $\tau' = \textstyle \frac{a\tau+b}{c\tau+d}$ for some $\textstyle\begin{pmatrix} a & c \\ b & d\end{pmatrix}\in \SL(2;\Z)$. Thus, we have indeed proved that 
\begin{equation}\label{quotient_RS}
\fh / \SL(2;\Z) \simeq \big( \C \setminus\{0;1\} \big) / \mathfrak{S}_3 \,.
\end{equation}
\end{proof}

\begin{rem}
The parameter $u\in\C\setminus\{0;1\}$ that appears in the proof of Theorem \ref{finite_pres} coincides with the parameter $u$ in the Legendre family of Example \ref{Legendre_family}. 
\end{rem}

Based on the bijection \eqref{quotient_RS}, we can identify this set with the Riemann surface $\C$ in two equivalent ways. Either by defining an $\SL(2;\Z)$-invariant surjection $\hat{j}: \fh\lra\C$ whose fibres are precisely the $\SL(2;\Z)$-orbits in $\fh$, or by defining a $\mathfrak{S}_3$-invariant surjection $\hat{\lambda}: \C\setminus\{0;1\} \lra \C$ whose fibres are precisely the $\mathfrak{S}_3$-orbits in $\C\setminus\{0;1\}$. The first method induces a bijection $j:\fh/\SL(2;\Z)\lra \C$ sending the $\SL(2;\Z)$-orbit of $\tau\in\fh$ to the $j$-invariant of the framed elliptic curve $(\C/\Lambda(\tau),0,(1,\tau))$. The latter is constructed via the theory of modular forms and defined by converting Equation \eqref{image_proj_embedding} into the equation 
$$
\wp^{\prime 2}(z)=4 \wp^3(z)-g_2(\tau) \wp(z)-g_3(\tau)
$$
where $g_2(\tau) = 60 G_4(\tau)$, $g_3(\tau) = 140 G_6(\tau)$ and $G_k(\tau)$ is the Eisenstein series $$ \sum_{(m,n)\in\Z^2, (m,n)\neq (0,0)} \frac{1}{(m\tau+n)^k}\,,$$ then setting $j(\tau) = \textstyle\frac{1728\, g_2(\tau)^3}{g_2(\tau)^3 - 27 g_3(\tau)^2}$. For the former approach and the definition of a bijection $\lambda : \C\setminus\{0;1\} / \mathfrak{S}_3 \lra \C$, recall that we want a surjective map $\hat{\lambda}: \C\setminus\{0;1\} \lra \C$ with generic fibres having six elements, and that is invariant under the transformations $f_\sigma: u\lmt (1-u)$ and $f_\tau: u \lmt \textstyle \frac{1}{u}$. This leads to $\hat{\lambda}(u) := \textstyle \frac{(1 + u(u-1))^3}{(u(u-1))^2}$, whose numerator is a polynomial of degree $6$ which is non-zero when $u=0$ or $u=1$. It can be checked that, indeed, $\hat{\lambda} \circ f_\sigma = \hat{\lambda}$ and $\hat{\lambda} \circ f_\tau = \hat{\lambda}$. Finally, note that if we reduce the cubic equation $y^2 = x(x-1)(x-u)$ to an equation of the form $y^2 = 4x^3 -g_2x - g_3$, we find that 
$g_2 = \textstyle \frac{\sqrt[3]{4}}{3}(1 + u (u-1))$ and $g_3 = \textstyle \frac{1}{27}(u+1)(2 u^2-5 u+2)$, so the $j$-invariant of the elliptic curve defined by the Legendre equation $y^2 = x(x-1)(x-u)$ is equal to $256 \times \hat{\lambda}(u)$.

\section{Stacks}\label{primer_stacks}

\subsection{Prestacks}

Prestacks formalise the notion of \emph{families} of geometric objects by building upon the intuition that a family is a special kind of map, one that you should be able to pull back. For instance, if a group $G$ acts on a space $X$, there should be:
\begin{enumerate}
\item For all space $S$, a notion of family of $G$-orbits in $X$, parameterised by $S$, and a notion of isomorphism of such families.
\item For all morphism $f:T\lra S$ and all family $P$ parameterised by $S$, a family $f^*P$ parameterised by $T$, such that there are natural isomorphisms $\id_S^*P\simeq P$ and $(f\circ g)^*P \simeq g^*(f^*P)$ for all morphism $g:U\lra T$.
\end{enumerate}

\begin{example}\label{quotient_prestack}
If $G$ is a Lie group acting by automorphisms on a manifold $X$, a family parameterised by $S$ of $G$-orbits in $X$ is defined to be a pair $(P,u)$ where $P$ is a principal $G$-bundle on $S$, and $u:P\lra X$ is a $G$-equivariant map. In particular, the image of $u$ is a union of $G$-orbits in $X$, and the intersection of the fibre of $u$ above a point $x\in X$ with a $G$-orbit in $P$, if non-empty, is in bijection with the stabiliser of $x$ in $G$. Moreover, if $f:T\lra S$ is a morphism, then $f^*P$ is indeed a principal $G$-bundle on $T$, equipped with a $G$-equivariant map to $X$:

\begin{equation*}
\begin{tikzcd}
f^*P \ar[r] \ar[d] & P \ar[r, "u"] \ar[d] & X\\
T \ar[r, "f"] & S & 
\end{tikzcd}
\end{equation*}

\medskip

An isomorphism between two families $(P,u)$ and $(Q,v)$ \emph{parameterised by the same space $S$}, is an isomorphism $\lambda:P\overset{\simeq}{\lra} Q$ of principal $G$-bundles over $S$, such that $v\circ\lambda = u$. Note that the definitions above make sense when $X=\bullet$ (a single point), with $G$ acting trivially on it.
\end{example}

We are now ready to give the formal definition of a prestack. Since we are mainly interested in the moduli stack of elliptic curves, our spaces $S$ will be \emph{complex analytic manifolds}. We shall denote by $\An$ the category of such spaces.

\begin{definition}\label{def_prestack}\index{prestack}
A \emph{prestack on $\An$} is a $2$-functor $\M: \An^{\mathrm{op}} \lra \text{Groupoids}$, meaning an assignment,
\begin{itemize}
\item for all complex analytic manifold $S\in\An$, of a groupoid $\M(S)$;
\item for all morphism $f:T\lra S$ in $\An$, of a pullback functor $f^{\,*}_\M:\M(S)\lra \M(T)$, often denoted simply by $f^*$;
\item for all pair of composable morphisms $g:U\lra T$ and $f:T\lra S$ in $\An$, of a commutative diagram 
\begin{equation}\label{identification_of_composed_pullback}
\begin{tikzcd}
\M(S) \ar[r, "f^*"] \ar[d, "(f\circ g)^*"] & \M(T) \ar[d, "g^*"] \\
\M(U) \ar[r, "\Phi^{\,\M}_{f,g}"] & \M(U) 
\end{tikzcd}
\end{equation}
\end{itemize}
such that:
\begin{enumerate}
\item $\id_S^* = \id_{\M(S)}$, and
\item for all morphism $h:V\lra U$ in $\An$, the following diagram commutes:
\begin{equation}\label{compatibility_of_pullbacks}
\begin{tikzcd}
& \M(S) \ar[rd, "f^*"] \ar[ld, "(f\circ g)^*"] \ar[ddd, dashed, "(f\circ g\circ h)^*"]&\\ 
\M(U) \ar[d, "h^*"] & & \M(T) \ar[d, "(g\circ h)^*"] \\
\M(V) \ar[rd, "\Phi^{\,\M}_{f\circ g,h}"] & & \M(V) \ar[ld, "\Phi^{\,\M}_{f,g\circ h}"] \\
& \M(V) &
\end{tikzcd}
\end{equation}
In other words, the dashed arrow $(f\circ g\circ h)^*:\M(S) \lra \M(V)$ is well-defined.
\end{enumerate}
\end{definition}
In practice, the identification $(f\circ g)^*\simeq_{\Phi^{\,\M}_{f,g}} g^*\circ f^*$ in Diagram \eqref{identification_of_composed_pullback} is often canonical: this was for instance the case in Example \ref{quotient_prestack} where, for all principal bundle $P\lra S$, it follows from the construction of the pullback bundle that there is indeed a canonical isomorphism $(f\circ g)^*P = g^*(f^*P)$. Likewise $h^*((f\circ g)^*P) = (g\circ h)^* ( f^* P)$ , which means that Condition \eqref{compatibility_of_pullbacks} is automatically satisfied in this case. So Example \ref{quotient_prestack} indeed defines a prestack, usually denoted by $[X/G]$ and called the \emph{quotient stack} of $X$ by $G$. If $\M$ is a prestack and $S$ is a manifold, the groupoid $\M(S)$ is called the groupoid of $S$-points of $\M$. For instance, the groupoid of $S$-points of $[X/G]$ is $$[X/G](S) := \left\{
\begin{tikzcd}
 P \ar[r, "u"'] \ar[d] & X\\
S & 
\end{tikzcd} 
\begin{array}{l}
\text{with}\ u:P\to X\ \text{a}\ G\text{-equivariant map}\\
\text{and}\ P\to S\ \text{a\ principal}\ G\text{-bundle}
\end{array}
\right\}.$$

When $X=\bullet$ is a point, the prestack $[\bullet/G]$ is usually denoted by $BG$, and called the \emph{classifying stack\index{stack!classifying stack} of $G$}. The case when $G=\{1\}$ is also interesting, for it enables us to see manifolds as prestacks (Example \ref{manifolds_as_prestacks}). Finally, as we shall see in Section \ref{def_of_stack_of_ell_curves}, families of elliptic curves parameterised by a complex analytic manifold $S\in\An$ indeed define a prestack.

\begin{example}\label{manifolds_as_prestacks}
A manifold $X$ defines a prestack, denoted by $\underline{X}$, by setting $\underline{X}(S):=\mathrm{Mor}(S,X)=\{u:S\lra X\}$, with morphisms between $u:S\lra X$ and $u':S \lra X$ given by 
$$ \Mor_{ \underline{ X }( S ) }( u, u' ) := 
\left\{ 
\begin{array}{cl}
\{\id_S\} & \text{if}\ u=u' \, , \\
\emptyset & \text{if}\ u \neq u' \, .
\end{array}
\right.
$$ Note that this is the standard way in which groupoids generalise sets: the groupoid associated to a set is the category whose objects are the elements of the set and the only morphisms are the identity morphisms. The pullback functor corresponding to $f:T\lra S$ is given by $f^*u=u\circ f$, which is indeed a functor given the way morphisms are defined in $ \underline{X}( S ) $ and $ \underline{X}( T ) $. As we shall see in Lemma \ref{Yoneda}, this defines a fully faithful functor $X\lmt \underline{X}$ from manifolds to stacks. A prestack $\X$ for which there exists a manifold $X$ such that $ \X \simeq \underline{X} $ is called a \emph{representable stack}\index{stack!representable stack}\index{representable!stack}. By Lemma \ref{Yoneda}, a manifold $X$ satisfying $\underline{X}\simeq \X$ is defined up to canonical isomorphism.
\end{example}

To conclude this subsection, we give the definition of a morphism of prestacks.

\begin{definition}
Let $\M,\N:\An^{\mathrm{op}}\lra\text{Groupoids}$ be prestacks on $\An$. A \emph{morphism of prestacks} $F:\M\lra \N$ is a a natural transformation between the $2$-functors $\M$ and $\N$, meaning an assignment for all space $S\in\An$, of a functor $F_S:\M(S)\lra \N(S)$ such that, for all morphism $f:T\lra S$ in $\An$, the following diagram is $2$-commutative (i.e.\ commutative up to a natural transformation in the category $ \N( T ) $)
\begin{equation}\label{2_comm_diag}
\begin{tikzcd}
\M(S) \ar[r, "F_S"] \ar[d, "f^{\,*}_\M"] & \N(S) \ar[d, "f^{\,*}_\N"]\\
\M(T) \ar[r, "F_T"] & \N(T)
\end{tikzcd}
\end{equation}
and moreover, for all $g:U\lra T$ in $\An$, we have $F_U(\Phi^{\,\M}_{f,g}) = \Phi^{\,\N}_{F_U(f),F_U(g)}$ as functors from $\N(U)$ to $\N(U)$. We shall denote by $\mathcal{PS}t$ the $2$-category of prestacks.
\end{definition}

\begin{example}\label{towards_Yoneda}
If $X$ is a manifold, morphisms of prestacks $F\in\Mor_{\mathcal{PS}t}(\underline{X},\M)$ correspond to objects $P_X\in\M(X)$. Indeed, the morphism $F$ is entirely determined by the object $P_X:=F_X(\id_X)\in \M(X)$ and the fact that, as $\id_X \circ u = u$, the morphism $F$ must satisfy, for all $S\in\An$, 
$$F_S:\begin{array}{rcl}
\underline{X}(S)=\Mor_{\An}(S,X) & \lra & \M(S)\\
u & \lmt & u^*_\M P_X
\end{array}\, .
$$ This sets up an equivalence of categories $\Mor_{\mathcal{PS}t}(\underline{X},\M) \simeq \M(X)$, obtained by sending $F$ to $P_X:=F_X(\id_X)$. For instance, if $\M=BG:=[\bullet/G]$, the groupoid $BG(S)$ is the category of principal 
$G$-bundles on $S$ and, given a principal bundle $P\lra X$, there is a natural functor
$$F_S: \begin{array}{rcl}
\Mor_{\An}(S,X) & \lra & BG(S)\\
u & \lmt & u^* P
\end{array}
$$ and the $2$-commutativity of Diagram \eqref{2_comm_diag} holds because of the canonical identification $(u\circ f)^*P \simeq f^*(u^*P)$ for all $f:T\lra S$.
\end{example}

Unwrapping the details of Example \ref{towards_Yoneda}, we get the following stacky version of the Yoneda lemma.

\begin{lemma}\label{Yoneda}
Let $X,Y$ be objects in $\An$ and let $\underline{X}, \underline{Y}$ be the corresponding prestacks. Then $$\Mor_{\mathcal{PS}t}(\underline{X},\underline{Y}) \simeq \Mor_{\An}(X,Y).$$ In particular, $\underline{X}\simeq \underline{Y}$ in $\mathcal{PS}t$ if and only if $X\simeq Y$ in $\An$.
\end{lemma}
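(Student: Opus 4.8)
The plan is to deduce the lemma at once from the equivalence of categories established in Example \ref{towards_Yoneda}, applied with the prestack $\M$ taken to be $\underline{Y}$. That example provides, for every manifold $X$, an equivalence $\Mor_{\mathcal{PS}t}(\underline{X},\M) \simeq \M(X)$, realised by $F \mapsto F_X(\id_X)$ and with inverse sending an object $P_X \in \M(X)$ to the morphism $S \mapsto (u \mapsto u^*_\M P_X)$. Specialising to $\M = \underline{Y}$ and using that $\underline{Y}(X) = \Mor_{\An}(X,Y)$ by the very definition of the representable prestack (Example \ref{manifolds_as_prestacks}), I get $\Mor_{\mathcal{PS}t}(\underline{X},\underline{Y}) \simeq \Mor_{\An}(X,Y)$. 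The point I would stress here is that $\underline{Y}(X)$ is a \emph{discrete} groupoid — its only morphisms are identities — so this equivalence is actually an equivalence of $\Mor_{\mathcal{PS}t}(\underline{X},\underline{Y})$ with a set, and in particular it identifies isomorphism classes of morphisms $\underline{X} \to \underline{Y}$ with honest morphisms $X \to Y$ in $\An$. Concretely, a morphism $F : \underline{X} \to \underline{Y}$ amounts to the single map $f := F_X(\id_X) : X \to Y$, after which $F_S$ is forced to be $u \mapsto f^*_{\underline Y} u = f \circ u$ for all $S \in \An$ and all $u \in \Mor_{\An}(S,X)$.

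For the ``in particular'' clause I would argue as follows. One direction is immediate: an isomorphism $f : X \to Y$ in $\An$ with inverse $g$ induces morphisms $\underline{f} : \underline{X} \to \underline{Y}$ and $\underline{g} : \underline{Y} \to \underline{X}$ given at each level by post-composition with $f$, resp.\ $g$, and these are visibly mutually inverse, so $\underline{X} \simeq \underline{Y}$ in $\mathcal{PS}t$. For the converse, suppose $F : \underline{X} \to \underline{Y}$ and $G : \underline{Y} \to \underline{X}$ with $G \circ F$ isomorphic to $\id_{\underline X}$ and $F \circ G$ isomorphic to $\id_{\underline Y}$. Put $f := F_X(\id_X)$ and $g := G_Y(\id_Y)$. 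Using the explicit form of $F$ and $G$ above, I would evaluate $G \circ F$ at the level of $X$-points: it sends $\id_X$ to $G_X(f) = g \circ f$. Since $G \circ F \simeq \id_{\underline X}$ while $\id_{\underline X}$ sends $\id_X$ to $\id_X$, the objects $g \circ f$ and $\id_X$ are isomorphic in the discrete groupoid $\underline{X}(X) = \Mor_{\An}(X,X)$, hence equal. By symmetry $f \circ g = \id_Y$, so $f$ is an isomorphism in $\An$ and $X \simeq Y$.

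The main (mild) obstacle is the bookkeeping showing that the assignment $F \mapsto F_X(\id_X)$ is compatible with composition — that is, $(G\circ F)_X(\id_X) = G_X\big(F_X(\id_X)\big) = G_X(f) = g \circ f$ — and that the $2$-isomorphism $G\circ F \simeq \id_{\underline X}$ genuinely forces the equality $g\circ f = \id_X$ rather than a mere isomorphism. Both points dissolve once one records, early in the proof, that every groupoid appearing here, namely $\underline{X}(S)$ and $\underline{Y}(S)$ for $S \in \An$, is discrete; consequently every natural transformation between functors valued in such a groupoid is an identity, the $2$-commutative squares in the definition of a morphism of prestacks between representable prestacks are strictly commutative, and ``isomorphic morphisms of representable prestacks'' means ``equal morphisms''. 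With that observation in hand, the remaining verifications are routine unwindings of the definitions of Section \ref{primer_stacks}, and the lemma follows.
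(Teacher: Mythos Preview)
Your proof is correct and follows exactly the approach the paper intends: the paper simply states that the lemma follows by ``unwrapping the details of Example \ref{towards_Yoneda}'', and your argument does precisely this, specialising $\M = \underline{Y}$ and exploiting the discreteness of the groupoids $\underline{Y}(S)$ to turn equivalences into equalities. If anything, you have supplied more detail than the paper itself provides, including the explicit verification of the ``in particular'' clause.
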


\subsection{Stacks}

There is an obvious analogy between Definition \ref{def_prestack} and that of a presheaf on $\An$. Based on that analogy, we can define stacks as prestacks that satisfy a descent condition with respect to open coverings of objects $S\in\An$. The only subtlety is that we have to glue not only morphisms but also objects. In what follows, given a prestack $\M$ on $\An$, a morphism $f:T\lra S$ in $\An$, and a morphism $\phi:A\lra B$ in $\M(S)$, we denote by $A|_T$ and $B|_T$ the objects $f^*A$ and $f^*B$ of $\M(T)$, and by $\phi|_T:A|_T\lra B|_T$ the morphism $f_\M^*\phi$ in $\M(T)$.

\begin{definition}\label{def_stack}\index{stack}
A \emph{stack} on $\An$ is a prestack $\M:\An^{\mathrm{op}}\lra\text{Groupoids}$ such that, for all $S\in\An$ and all open covering $(S_i)_{i\in I}$ of $S$, the following two conditions are satisfied:
\begin{enumerate}
\item (Gluing of morphisms). For all pair of objects $A,B\in\M(S)$ and all family of morphisms $$\phi_i:A|_{S_i}\lra B|_{S_i}\ \text{in}\ \M(S_i)$$ such that, for all $ i, j$ in $I$, $ \phi|_{ S_i \cap S_j } = \phi_j|_{ S_i \cap S_j } $, there exists a unique morphism $\phi:A\lra B$ in $\M(S)$ such that, for all $i\in I$, $\phi|_{S_i}=\phi_i$.
\item (Gluing of objects). For all family of objects $(A_i)_{i\in I} \in \prod_{i\in I}\M(S_i)$ equipped with isomorphisms $$\phi_{ij}: A_j|_{S_i\cap S_j} \overset{\simeq}{\lra} A_i|_{S_i\cap S_j}\ \text{in}\ \M(S_i\cap S_j)$$ satisfying the cocycle conditions $\phi_{ii} = \id_{A_i}$ and $\phi_{ij}\circ\phi_{jk} = \phi_{ik}$ in $\M(S_i\cap S_j\cap S_k)$ for all triple $(i,j,k)$, there exists an object $A\in\M(S)$ together with isomorphisms $\phi_i:A|_{S_i}\lra A_i$ in $\M(S_i)$, such that, for all $(i,j)$, one has $ \phi_i \circ\phi_j^{-1} = \phi_{ij}$ in $\M(S_i\cap S_j)$.
\end{enumerate}
A morphism of stacks $F:\M\lra\N$ is a morphism of prestacks from $\M$ to $\N$. We shall denote by $\mathcal{S}t$ the $2$-category of stacks.
\end{definition}

\begin{rem}
It is perhaps not easy to note at first, but an important feature of the definition of stacks is that objects are required to glue but without any requirement of uniqueness: in Point (2) of Definition \ref{def_stack}, the object $ A $ constructed from the $ A_i $ is ``unique up to isomorphism", by a direct application of Condition (1), but the isomorphisms $ \phi_i: A|_{ S_i } \lra A_i $ are not unique, which is the familiar condition from the theory of fibre bundles and is in fact what allows objects with non-trivial automorphisms to define stacks. We will for instance study the stack of elliptic curves from that point of view in Section \ref{stack_of_ell_curves}.
\end{rem}

\begin{prop}\label{prestack_of_ell_curves_is_a_stack}
The prestacks $[X/G]$, $BG$ and $\underline{X}$ introduced in Examples \ref{quotient_prestack} and \ref{manifolds_as_prestacks} are stacks on $\An$.
\end{prop}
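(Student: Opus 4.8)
The plan is to reduce the three assertions to the single case of the quotient stack $[X/G]$, and then to verify the two gluing axioms of Definition \ref{def_stack} directly, using only standard facts about holomorphic principal $G$-bundles. First I would note that $BG=[\bullet/G]$ by definition, hence a special case of $[X/G]$, and that $\underline X$ is the case $G=\{1\}$: a principal $\{1\}$-bundle over $S$ is canonically $\id_S$ and a $\{1\}$-equivariant map to $X$ is simply a morphism $S\to X$, with only the identity as automorphism. For $\underline X$ both axioms are then immediate: the only morphisms in $\underline X(S)$ are identities, so ``gluing of morphisms'' is vacuous, while ``gluing of objects'' reduces to the statement that morphisms $S_i\to X$ agreeing on the overlaps $S_i\cap S_j$ descend to a unique morphism $S\to X$ — i.e.\ to the sheaf property of $\Mor_{\An}(-,X)$, which holds because being a morphism of complex analytic manifolds is a local condition. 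It then remains to treat $\M:=[X/G]$ for an arbitrary complex Lie group $G$ acting by automorphisms on $X\in\An$.

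The argument for $\M$ rests on two facts, both classical in the analytic category: (a) the presheaf $T\mapsto\Mor_{\An}(T,Y)$ is a sheaf on $\An$ for every $Y\in\An$, so in particular a family of morphisms of principal $G$-bundles over the pieces of an open covering, agreeing on the overlaps, glues to a unique such morphism (the glued map is automatically $G$-equivariant over the base, these being local conditions); and (b) \emph{descent for bundles}: given principal $G$-bundles $P_i$ over $S_i$ and isomorphisms $\lambda_{ij}\colon P_j|_{S_i\cap S_j}\xrightarrow{\sim}P_i|_{S_i\cap S_j}$ with $\lambda_{ii}=\id$ and $\lambda_{ij}\lambda_{jk}=\lambda_{ik}$ over $S_i\cap S_j\cap S_k$, there exist a principal $G$-bundle $P\to S$ and isomorphisms $\psi_i\colon P|_{S_i}\xrightarrow{\sim}P_i$ with $\psi_i\psi_j^{-1}=\lambda_{ij}$. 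This last fact is just the usual construction of a fibre bundle from a cocycle of transition data, carried out holomorphically.

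For \emph{gluing of morphisms} in $\M$: given $A=(P,u)$ and $B=(Q,v)$ in $\M(S)$ together with morphisms $\phi_i\colon A|_{S_i}\to B|_{S_i}$ — that is, bundle isomorphisms $\lambda_i\colon P|_{S_i}\to Q|_{S_i}$ with $v|_{S_i}\circ\lambda_i=u|_{S_i}$ — agreeing on the overlaps, fact (a) yields a unique bundle morphism $\lambda\colon P\to Q$ restricting to the $\lambda_i$; the relation $v\circ\lambda=u$ holds after restriction to each $S_i$, hence holds, so $\phi:=\lambda$ is the required unique morphism $A\to B$. For \emph{gluing of objects}: given $A_i=(P_i,u_i)\in\M(S_i)$ and cocycle isomorphisms $\phi_{ij}$ — i.e.\ bundle isomorphisms $\lambda_{ij}$ over $S_i\cap S_j$ with $u_i\circ\lambda_{ij}=u_j$ and satisfying the cocycle conditions — fact (b) produces $P\to S$ and $\psi_i\colon P|_{S_i}\xrightarrow{\sim}P_i$ with $\psi_i\psi_j^{-1}=\lambda_{ij}$. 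The maps $u_i\circ\psi_i\colon P|_{S_i}\to X$ then agree on overlaps, because on $S_i\cap S_j$ one has $\psi_i=\lambda_{ij}\circ\psi_j$ and hence $u_i\circ\psi_i=u_i\circ\lambda_{ij}\circ\psi_j=u_j\circ\psi_j$; by fact (a) they glue to a morphism $u\colon P\to X$, which is $G$-equivariant since equivariance is checked locally. Thus $(P,u)\in\M(S)$, and by construction each $\psi_i$ is an isomorphism $(P,u)|_{S_i}\xrightarrow{\sim}A_i$ in $\M(S_i)$ with $\psi_i\psi_j^{-1}=\phi_{ij}$, as required.

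The only step with genuine content is fact (b), the descent of holomorphic principal $G$-bundles along open coverings of objects of $\An$; everything else is formal bookkeeping, chiefly checking that the equivariant maps $u_i$ are compatible with the descended bundle. The point worth stressing is that the total space obtained by gluing the $P_i$ along the analytic cocycle $(\lambda_{ij})$ is again a complex analytic manifold carrying a holomorphic $G$-action, so that $(P,u)$ genuinely defines an object of $\M(S)=[X/G](S)$; and, as emphasised in the remark following Definition \ref{def_stack}, this object is produced only up to (non-unique) isomorphism — which is exactly the feature that promotes $[X/G]$, and with it $BG$ and $\underline X$, from a prestack to a stack.
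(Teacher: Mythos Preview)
Your proposal is correct and follows exactly the approach sketched in the paper: reduce $BG$ and $\underline{X}$ to special cases of $[X/G]$, then verify the gluing axioms using the fact that principal $G$-bundles and their morphisms can be constructed by gluing. The paper leaves the verification as an exercise with only that hint, whereas you have carefully written out the two descent steps (for morphisms and for objects) and checked that the equivariant maps $u_i$ glue compatibly; this is precisely the content the paper omits.
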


\begin{proof}
The prestacks $BG$ and $\underline{X}$ are special cases of the prestack $[X/G]$. We leave the proof that the prestack $[X/G]$ is a stack as an exercise (it follows from the fact that principal $G$-bundles on $S\in\An$, and morphisms between them, can be constructed by gluing).
\end{proof}

In line with the intuition that stacks are designed to formalise the idea of families as mathematical objects that can be \emph{pulled back} (prestack) and \emph{glued} (stacks), the $2$-category of stacks admits fibre products. 

\begin{definition}\label{fibre_product}\index{fibre product of stacks}
Let $F:\M\lra \X$ and $G:\N\lra \X$ be morphisms of stacks on $\An$. Let us denote by $\M \times_\X \N$ the $2$-functor sending a space $S\in\An$ to the groupoid  whose objects are
\begin{equation}\label{fibre_product_def}
\Mor_{\mathcal{S}t}(\underline{S},\M\times_\X\N) := 
\left\{
(\alpha,\beta,\phi)\quad \Bigg|\quad \substack{\displaystyle{\alpha:\underline{S}\lra \M,}\\ \displaystyle{\beta: \underline{S}\lra \N,}\\ \displaystyle{\phi:\X\overset{\simeq}{\lra} \X}}\ \text{such that}
\begin{tikzcd}
& \X \ar[dd, "\phi", "\simeq"'] \\
\underline{S} \ar[ur, "F\circ \alpha"] \ar[dr, "G\circ \beta"'] & \\
& \X 
\end{tikzcd}\ 
\text{commutes.}
\right\}
\end{equation} and whose morphisms between the objects $(\alpha,\beta,\phi)$ and $(\alpha',\beta',\phi')$ are given by triples $(\lambda,\mu,\psi)$ of morphisms $\lambda:\M\lra\M$, $\mu:\N\lra\N$ and $\psi:\X\overset{\simeq}{\lra}\X$ such that $F\circ(\lambda\circ\alpha) = \psi \circ (F\circ\alpha)$ and $G\circ(\mu\circ\beta) = \psi \circ (G\circ\beta)$ as morphisms from $\underline{S}$ to $\X$, and $\phi'\circ\psi=\psi\circ\phi$ as morphisms from $\X$ to $\X$ (i.e.\ $(\lambda,\mu,\psi)$ induces an isomorphism between diagrams of the form appearing in \eqref{fibre_product_def}).
\end{definition}

Definition \ref{fibre_product} is not easy to digest, because it involves a lot of hidden compatibility conditions. It is instructive to compare it to Thurston's definition of the fibre product of two orbifolds, which appears in the course of the proof of Proposition~13.2.4 in \cite[pp.305--307]{Thurston_2002}. If we think of the groupoid $\Mor_{\mathcal{PS}t}(\underline{S},\M\times_\X\N)$ as families parameterised by $S$, we get the following more concrete description:
\begin{equation}\label{fibre_product_alternate_def}
(\M \times_\X \N)(S) := \big\{(A,B,\phi_S)\ |\ A\in\M(S), g\in\N(S)\ \text{and}\ \phi_S:F_S(A)\overset{\simeq}{\lra} G_S(B)\ \text{in}\ \X(S)\big\}
\end{equation} with morphisms $(A,B,\phi_S)\lra (A',B',\phi'_S)$ being pairs $(u_S,v_S)$ of morphisms $u_S:A\lra A'$ in $\M(S)$ and $v_S:B\lra B'$ in $\N(S)$ making the following diagram commute in $\X(S)$:
\begin{equation*}
\begin{tikzcd}
F_S(A) \ar[r, "\phi_S", "\simeq"'] \ar[d, "F_S(u_s)"] & G_S(B) \ar[d, "G _S(v_S)"] \\
F_S(A') \ar[r, "\phi'_S", "\simeq"'] & G_S(B')
\end{tikzcd}
\end{equation*} Let us show how we can use \eqref{fibre_product_alternate_def} as a definition of the fibre product $\M\times_\chi\N$. We simply observe that, because of the fact that, for all morphism $f:T\lra S$ in $\An$, we have $F_T(A|_T) = F_S(A)|_T$ and $G_T(B_T)=G_S(B)|_T$ in $\X(T)$, we must add the compatibility condition that $\phi_T=\phi_S|_T$ as functors from $\X(T)$ to $\X(T)$, to ensure that we have a well-defined pullback morphism 
$$
f^{\ *}_{\M\times_\X\N} : 
\begin{array}{rcl}
(\M\times_\X\N)(S) & \lra & (\M\times_\X\N)(T)\\
(A,B,\phi_S) & \lmt & (A|_T, B|_T,\phi_T)
\end{array}\,.
$$ This compatibility condition, which needs to be added here, is encapsulated in Definition \ref{fibre_product} as the fact that the arrow $\phi:\X\lra\X$ appearing in Diagram \eqref{fibre_product_def} is a morphism of prestacks.

\begin{example}\label{towards_atlas_for_BG}
Let us compute the fibre product of a morphism $\underline{M}\lra BG$ with the morphism $\underline{\bullet}\lra BG$, where $M\in\An$ and $\bullet$ is a single point. By Example \ref{towards_Yoneda}, the first morphism corresponds to a principal $G$-bundle $P\lra M$, while the second morphism corresponds to the $G$-bundle $G\lra\bullet$. Then, by definition of the stacks $\underline{M}$, $\underline{\bullet}$ and $\underline{M}\times_{BG}\bullet$, and using the fact that a principal bundle is trivial if and only if it admits a global section, we have:
\begin{eqnarray*}
(\underline{M}\times_{BG}\underline{\bullet})(S) & \simeq & \left\{ (u,\phi)\ |\ u:S\lra M\ \text{and}\ \phi\ \text{is\ an\ isomorphism}\ u^*P\lra S\times G\right\}\\ 
& \simeq & \{ (u,\sigma)\ |\ u:S\lra M\ \text{and}\ \sigma\ \text{is\ a\ section\ of}\ u^*P\}.
\end{eqnarray*} But, by the universal property of $u^*P$, giving a section $\sigma:S\lra u^*P$ is equivalent to giving a morphism $f:S\lra P$.
$$
\begin{tikzcd}
S \ar[ddr, bend right=20, "\id_S"'] \ar[rrd, bend left=20, "f"] \ar[rd, "\exists!\,\sigma", dashed]& & \\
& u^*P \ar[r] \ar[d] & P \ar[d] \\
& S \ar[r] & M
\end{tikzcd}
$$ So $(\underline{M}\times_{BG}\underline{\bullet})(S) \simeq \Mor(S,P)$, which proves that the fibre product $\underline{M}\times_{BG}\bullet$ is isomorphic to the stack $\underline{P}$, where $P\lra M$ is the principal $G$-bundle defining the given morphism $\underline{M}\lra BG$. In particular, the stack $\underline{M}\times_{BG}\bullet$ is representable by the manifold $P$.
\end{example}

\begin{prop}
The $2$-functor $\M\times_\X\N$ defined in \eqref{fibre_product_def} is a prestack and it comes equipped with two morphisms $p_\M:\M\times_\X\N\lra \M$ and $p_N:\M\times_\X\N\lra \N$ making the following diagram a $2$-cartesian square:
\begin{equation*}
\begin{tikzcd}
\M\times_\X\N \ar[r, "p_\N"] \ar[d, "p_\M"] & \N \ar[d, "G"]\\
\M \ar[r, "F"] & \X
\end{tikzcd}
\end{equation*}
\end{prop}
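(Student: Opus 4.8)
The plan is to carry out every verification using the concrete description \eqref{fibre_product_alternate_def} rather than the more opaque \eqref{fibre_product_def}, reducing each point to the corresponding property already known for $\M$, $\N$ and $\X$.

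\emph{Step 1: $\M\times_\X\N$ is a prestack.} The groupoids $(\M\times_\X\N)(S)$ are given; for a morphism $f:T\to S$ in $\An$ one sets $f^*(A,B,\phi_S):=(A|_T,B|_T,\phi_S|_T)$. This makes sense because the $2$-commutativity of \eqref{2_comm_diag} for $F$ and $G$ yields $F_T(A|_T)=F_S(A)|_T$ and $G_T(B|_T)=G_S(B)|_T$ in $\X(T)$, so $\phi_S|_T$ is an isomorphism $F_T(A|_T)\overset{\simeq}{\lra}G_T(B|_T)$; on morphisms one pulls back the two components separately. Functoriality, $\id_S^*=\id$, and the coherence datum of \eqref{identification_of_composed_pullback} — which one takes to be the pair $\big(\Phi^{\,\M}_{f,g},\Phi^{\,\N}_{f,g}\big)$, so that \eqref{compatibility_of_pullbacks} holds automatically because $\Phi^{\,\M}$ and $\Phi^{\,\N}$ satisfy it — are all routine unwindings of the definitions.

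\emph{Step 2: the projections and the filling $2$-cell.} Define $(p_\M)_S(A,B,\phi_S):=A$ and $(p_\N)_S(A,B,\phi_S):=B$, and likewise on morphisms. The squares \eqref{2_comm_diag} attached to $p_\M$ and $p_\N$ commute strictly, since $f^*$ on $\M\times_\X\N$ was built componentwise, and compatibility with the coherence data follows from the choice made in Step~1. The $2$-isomorphism $F\circ p_\M\Rightarrow G\circ p_\N$ filling the square has component $\phi_S$ at an object $(A,B,\phi_S)$; its naturality with respect to $S$-morphisms is exactly the commuting square displayed just after \eqref{fibre_product_alternate_def}, and its compatibility with pullbacks is the condition $\phi_T=\phi_S|_T$ built into the definition (encoded in \eqref{fibre_product_def} as the requirement that $\phi$ be a morphism of prestacks).

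\emph{Step 3: the universal property.} Given a prestack $\Y$ together with morphisms $\alpha:\Y\to\M$, $\beta:\Y\to\N$ and a $2$-isomorphism $\eta:F\circ\alpha\Rightarrow G\circ\beta$, the induced morphism is forced: $\gamma_S(C):=\big(\alpha_S(C),\,\beta_S(C),\,\eta_C\big)$ for $C\in\Y(S)$, where $\eta_C:F_S(\alpha_S(C))\overset{\simeq}{\lra}G_S(\beta_S(C))$ is the component of $\eta$ at $C$, and $\gamma_S$ acts as $(\alpha_S,\beta_S)$ on morphisms; this lands in $(\M\times_\X\N)(S)$ precisely because $\eta$ is a natural transformation. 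That $\gamma$ is a morphism of prestacks, and that $p_\M\circ\gamma\simeq\alpha$ and $p_\N\circ\gamma\simeq\beta$ paste with $\eta$ to the $2$-cell of Step~2, again reduce to the properties of $\alpha$, $\beta$ and the naturality of $\eta$. The step I expect to be the main obstacle is the $2$-categorical uniqueness of $\gamma$: from a second morphism $\gamma'$ with the same properties one must extract a canonical natural isomorphism $\gamma\Rightarrow\gamma'$ and check that it is the unique one compatible with the chosen identifications $p_\M\circ\gamma'\simeq\alpha$ and $p_\N\circ\gamma'\simeq\beta$; this is a diagram chase in the groupoids $\M(S)$, $\N(S)$, $\X(S)$ that has to be performed coherently in $S$, which is precisely the bookkeeping that Definition \ref{fibre_product} was designed to package. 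As an aside, since $\M$, $\N$, $\X$ are stacks and descent along an open cover is checked componentwise, $\M\times_\X\N$ is in fact a stack, although only the prestack assertion and the $2$-cartesian square are claimed here.
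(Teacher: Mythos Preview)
Your proposal is correct and follows essentially the same route as the paper: the prestack verification is exactly what the paper means by ``the discussion following Definition \ref{fibre_product}'', and your Step~3 is the unpacked version of the paper's one-line argument. The only difference in presentation is that the paper observes that, because $(\M\times_\X\N)(S)$ was \emph{defined} in \eqref{fibre_product_def} as the groupoid of triples $(\alpha,\beta,\phi)$ with $\alpha:\underline{S}\to\M$, $\beta:\underline{S}\to\N$ and $\phi$ a compatible isomorphism, the universal property is tautological for prestacks of the form $\underline{S}$, and then invokes the fact that a prestack is determined by its $S$-points to extend to arbitrary $\Y$; your direct construction of $\gamma$ for general $\Y$ is precisely what that Yoneda-type extension amounts to when written out.
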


\begin{proof}
We deduce from the discussion following Definition \ref{fibre_product} that $\M\times_\chi\N$ is a prestack. By \eqref{fibre_product_def}, the prestack $\M\times_\chi\N$ satisfies the universal property of a $2$-fibre product for all prestacks of the form $\underline{S}$ where $S\in\An$. Therefore, it satisfies it for all prestacks on $\An$.
\end{proof}

We omit the proof that the prestack $\M \times_\X \N$ is a stack. Note that if $\underline{M}, \underline{N}$, and $\underline{X}$ are representable stacks, the stack $\underline{M}\times_{\underline{X}} \underline{N}$ is not representable in general, because $M\lra X$ and $N\lra X$ do not admit a fibre product in $\An$ in general.

\subsection{Analytic stacks} An analytic stack is a stack $\X$ on $\An$ that admits a \emph{presentation} (or \emph{atlas}) $p:\underline{X}\lra \X$, where $\underline{X}$ is the stack associated to a space $X\in\An$ (henceforth, we shall often write simply $X$ for the stack $\underline{X}$ associated to the space $X\in\An$). Before we can define more precisely what a presentation is, we need to introduce the following notion.

\begin{definition}\label{representable_morphism}\index{representable!morphism}
A morphism of stacks $F:\M\lra\X$ is called \emph{representable} if, for all morphism $S\lra \M$ with $S\in\An$, there exists a complex analytic manifold $M_S\in\An$ such that $S\times_\X \M \simeq M_S$.
\end{definition}

In other words, for all $S\lra \X$, the morphism of stacks $F:\M\lra\X$ base-changes to a morphism of complex analytic manifolds $f_S:M_S\lra S$, as per the following $2$-Cartesian square:
\begin{equation}\label{base_change_rep_morphism}
\begin{tikzcd}
S\times_\X\M \simeq M_S\ar[d, "f_S"] \ar[r] & \M \ar[d, "F"] \\
S \ar[r] & 	\X
\end{tikzcd}
\end{equation}

\begin{example}\label{representability_of_the_can_morphism_from_X_to_X_mod_G}
The computation of $M\times_{BG}\bullet$ in Example \ref{towards_atlas_for_BG} shows that the canonical morphism $\bullet\lra BG$ is representable. Since the pullback of the trivial bundle $(X\times G)\lra X$ to an arbitrary manifold $S$ mapping to $X$ is trivial, the same proof works for the canonical morphism $X\lra [X/G]$, corresponding to the trivial bundle $X\times G$ (with $G$-equivariant map from $X\times G$ to $X$ given by the action map $(X\times G)\lra X$, where $G$ acts on $X$ to the right). Note that the forgetful morphism $BG\lra\bullet$ (sending a principal $G$-bundle $P\lra S$ to the constant map $S\lra\bullet$) is not a representable morphism as soon as $\bullet\times_\bullet BG \simeq BG$ is not a representable stack.
\end{example}

Working with representable morphisms enables one to carry over certain local properties of morphisms from $\An$ to $\mathcal{S}t$. Most prominently for us, the following one.

\begin{definition}\label{submersion_between_stacks}
Let $\M$ and $\X$ be stacks over $\An$. A morphism of stacks $F:\M\lra\X$ is called a \emph{submersion} if it is representable and, for all morphism of stacks $S\lra \X$ with $S\in\An$, the induced morphism $f_S:M_S\lra S$ appearing in Diagram \eqref{base_change_rep_morphism} is a submersion in $\An$.
\end{definition}

Intuitively, this is a meaningful definition because, for manifolds, a morphism $f:Y\lra X$ is a submersion in $\An$ if and only if for all open coverings $X = \cup_{i\in I} X_i$ and $Y=\cup_{j\in J} Y_j$, the induced morphisms $f_{ij}: Y_j\cap f^{-1}(X_i) \lra X_i$ are submersions. Next, we define surjective morphisms of stacks (which bears an analogy to the notion of surjective morphism of sheaves).

\begin{definition}\label{surjective_morphism_of_stacks}
Let $\M$ and $\X$ be stacks over $\An$. A morphism $F:\M\lra\X$ is said to be \emph{surjective} if, for all $S\in\An$ and all $P\in \X(S)$, there is an open covering $(S_i)_{i\in I}$ of $S$ such that $P|_{S_i}$ is in the essential image of the functor $F_{S_i}:\M(S_i)\lra \X(S_i)$, i.e.\ there exists a family $(P_i)_{i\in I}$ of objects $P_i\in \M(S_i)$ and, for all $i\in I$, an isomorphism $F_{S_i}(P_i) \simeq P|_{S_i}$ in $\X(S_i)$.
\end{definition}

We can now define a \emph{presentation} (or \emph{atlas of an analytic stack})\index{atlas} of a stack $\X$ on $\An$ as a surjective representable morphism $p:X\lra \X$ which is a submersion and whose domain is a manifold $X\in\An$. This leads to the following definition of an analytic stack.

\begin{definition}\label{analytic_stack_def}\index{stack!analytic stack}
An \emph{analytic stack} is a stack $\X$ on $\An$ equipped with an analytic manifold $X\in\An$ and a morphism $p:X\lra\X$ such that: 
\begin{enumerate}
\item $p$ is representable (Definition \ref{representable_morphism}).
\item $p$ is surjective (Definition \ref{surjective_morphism_of_stacks}).
\item $p$ is a submersion (Definition \ref{submersion_between_stacks}).
\end{enumerate}
A morphism between analytic stacks $\M\lra \X$ is a morphism of stacks from $\M$ to $\X$.
\end{definition}

The same definition works for a differentiable stack $\X$; it is a stack on the category of differentiable manifolds which is equipped with a representable surjective morphism $p:X\lra \X$ such that $X$ is a differentiable manifold and $p$ is a submersion.

\begin{example}\label{quot_stacks_are_analytic}
It is straightforward to check that, if $X\in\An$, the identity morphism $\id_X$ gives a presentation $X\lra \underline{X}$. So $\underline{X}$ is an analytic stack in a canonical way. In fact, we could also take, as a presentation for $\underline{X}$, the analytic manifold $\sqcup_{i\in I}U_i$, where $(U_i)_{i\in I}$ is an analytic atlas of $X$ in the traditional sense. Let us now prove that if $G$ is a complex Lie group acting on an analytic manifold $X$, then the quotient stack $[X/G]$ admits a canonical structure of analytic stack, given by the morphism $X\lra[X/G]$ defined by the trivial principal $G$-bundle on $X$. We saw in Example \ref{representability_of_the_can_morphism_from_X_to_X_mod_G} that this morphism is representable. It is surjective because principal $G$-bundles on a manifold $S$ are locally trivial. And it is a submersion because the projection map of a principal bundle $P\lra S$ is a submersion. In particular, the classifying stack $BG$ of a complex Lie group $G$ admits a canonical structure of analytic stack.
\end{example}

Note that a fibre product of analytic stacks is a stack over $\An$ but not necessarily an analytic stack.

\section{Orbifolds}\label{orbifolds_section}

We will eventually define an analytic orbifold as an analytic stack admitting an open covering by quotient subststacks of the form $[U/\Gamma]$, where $U$ is an analytic manifold and $\Gamma$ is a finite group acting on $U$ by automorphisms. Before we can do that, we need to define open substacks.

\begin{definition}\index{substack}
Let $\X$ be a stack on $\An$. A \emph{substack} of $\X$ is a pair $(\Y,F)$ where $\Y$ is a stack on $\An$ and $F:\Y\lra \X$ is a morphism of stacks. If $\X$ is an analytic stack and the substack $\Y$ is also analytic, we will call $\Y$ an \emph{analytic substack} of $\X$.
\end{definition}

\begin{example}\label{quotient_substack}
Let $G$ be a complex Lie group acting to the right on an analytic manifold $X$ and let $Y\subset X$ be an \emph{open} submanifold of $X$ that is invariant under the action of a subgroup $H\subset G$. The $H$-equivariant map $Y\hookrightarrow X$ induces a canonical morphism of stacks $F:[Y/H]\lra [X/G]$. Explicitly, the morphism $F$ is defined, for all $S\in\An$, by the functor 
\begin{equation}\label{ext_struct_gp}
F_S: 
\left(
\begin{tikzcd} P \ar[r] \ar[d] & Y\\ S & \end{tikzcd}
\right) 
\lmt 
\left(
\begin{tikzcd} (P\times_H G) \ar[r] \ar[d] & Y\cdot G \underset{\text{\tiny{open}}}{\subset} X\\ S & \end{tikzcd}
\right)
\end{equation} sending a principal $H$-bundle $P$ on $S$ equipped with an $H$-equivariant map to $Y$ to the extension of structure group $P\times_H G$, equipped with the induced $G$-equivariant map to $Y\cdot G\subset X$ . This morphism $F$ makes $[Y/H]$ a substack of $[X/G]$. Note that $[X/G](S)$ is in general strictly larger than $[Y/H](S)$, since it is possible that not all principal $G$-bundles on $S$ admit a reduction of structure group on $H$. Also, the functor $F_S$ is in general neither full nor faithful: given two principal $G$-bundles on $S$ admitting a reduction of structure group to $H$, a morphism between them does not necessarily come from a morphism between the two reductions and, even if it does, the morphism it comes from is not necessarily unique (for instance, a given principal $G$-bundle may admit non-isomorphic reductions of structure group to $H$). Note that if we study the analogous situation with $Y$ \emph{closed} in $X$, we need to be careful with the manifold structure on $Y\cdot G$ in order to get a morphism $Y\cdot G\lra X$ in $\An$ (which may or may not be a \emph{closed} immersion).
\end{example}

It is easy to make sense of the notion of open substack for representable morphisms (Definition \ref{open_substack_def}). And for analytic stacks, it suffices to check the condition with respect to an atlas (Proposition \ref{check_opennness_on_atlas}).

\begin{definition}\label{open_substack_def}
Let $\X$ be a stack on $\An$. A substack $F:\Y\lra\X$ is called an \emph{open substack}\index{substack!open substack} of $\X$ if:
\begin{enumerate}
\item $F$ is representable (Definition \ref{representable_morphism}).
\item For all morphism of stacks $M\lra \X$ with $M\in\An$, the morphism $f_M: \Y\times_\X M \lra M$ is an open morphism in $\An$.
\end{enumerate}
\end{definition}

Note that Condition 2, which says that $f_M: \Y\times_\X M \lra M$ is open in $\An$, makes sense because the stack $\Y\times_\X M$ is representable by a manifold (in this case because $F$ is assumed to be a representable morphism; see Remark \ref{rep_morph_for_analytic_stack} for further comments).

\begin{example}\label{open_substacks_in_quotient_stack}
Let us retake Example \ref{quotient_substack}. We will show that, if $Y\subset X$ is open and $G/H$ is a \emph{discrete} topological space, then $[Y/H]$ is an open substack of $[X/G]$. This will apply in particular when $G$ is a discrete group (see also Example \ref{covers_of_quotient_stack}). Firstly, we have to show that the morphism \eqref{ext_struct_gp} is representable and secondly, that it is open (this is where we shall use that $G/H$ is discrete). For the first part, we compute the fibre product $[Y/H]\times_{[X/G]} M$ for all morphism of stacks $M\lra [X/G]$ with $M\in\An$. Such a morphism is given by a principal $G$-bundle $P$ over $M$, equipped with a $G$-equivariant morphism $P\lra X$ in $\An$. Likewise, an arbitrary morphism $S\lra[Y/H]$ with $S\in\An$ is given by a principal $H$-bundle $Q$ over $S$, equipped with an $H$-equivariant morphism $Q\lra Y$ in $\An$. So, given a morphism $u:S\lra M$ in $\An$, a morphism of stacks $S\lra [Y/H]\times_{[X/G]} M$ is given by an isomorphism of principal $G$-bundles between $u^*P$ and $Q\times_H G$ over $S$, i.e.\ a reduction of structure group from $G$ to $H$ for the principal $G$-bundle $u^*P\lra S$. It is well-known that reductions of structure group are in bijection with sections of the associated fibre bundle $u^*P\times_G (G/H) \simeq u^*(P\times_G (G/H))$ over $S$. Then, as in Example \ref{towards_atlas_for_BG}, such sections are in bijection with morphisms of analytic manifolds from $S$ to $P \times_G (G/H)$, which proves that the canonical morphism $F:[Y/H]\lra[X/G]$, defined in \eqref{ext_struct_gp} by extension of structure group and inclusion of $Y\cdot G$ in $X$, is representable (in particular, when $M=X$ and $P=X\times G$, then $P\times_G(G/H) = X\times (G/H)$). Since we now have a $2$-Cartesian diagram 
$$
\begin{tikzcd}
 P \times_G (G/H)\simeq \left[Y/H\right]_M \ar[r] \ar[d] & \left[Y/H\right] \ar[d] \\
 M \ar[r, "P"'] & \left[X/G\right]
\end{tikzcd}
$$ we see that the morphism of stacks $[Y/H]\lra[X/G]$ pulls back to the fibre bundle $P\times_G (G/H) \lra M$ in $\An$, which is indeed an open map \textit{since the fibre, which is $G/H$, is discrete}. 
\end{example}

\begin{rem}
Note that, if we take $H=G$ in Example \ref{open_substacks_in_quotient_stack}, then the canonical morphism $[Y/G]\lra [X/G]$ is an open \emph{embedding} in the sense that, for all $M\in\An$, the morphism $[Y/G]_M \lra M$ is an open embedding in $\An$ (this is obvious because $[Y/G]_M \simeq M$ when $H=G$). 
\end{rem}

\begin{prop}\label{check_opennness_on_atlas}
Let $\X$ be a stack on $\An$ and assume that $\X$ is an \emph{analytic} stack with presentation $p:X\lra\X$. A substack $F:\Y\lra\X$ is an \emph{open substack} of $\X$ if:
\begin{enumerate}
\item $F$ is representable.
\item The morphism $f_X: \Y\times_\X X \lra X$ is an open morphism in $\An$.
\end{enumerate}
\end{prop}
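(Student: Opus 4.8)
The plan is to deduce Condition~(2) of Definition~\ref{open_substack_def} for an arbitrary morphism $M\to\X$ with $M\in\An$ from its special case $M=X$, using the surjectivity of the presentation $p:X\to\X$ together with the fact that openness of a morphism in $\An$ is a local property on the target. Observe first that if $(M_\alpha)_\alpha$ is an open cover of $M$ and each restriction $f_M^{-1}(M_\alpha)\to M_\alpha$ is open, then $f_M:\Y\times_\X M\to M$ is open, since for every open $U\subseteq\Y\times_\X M$ one has $f_M(U)\cap M_\alpha=f_M\big(U\cap f_M^{-1}(M_\alpha)\big)$. So it is enough to produce a convenient cover of $M$: the morphism $M\to\X$ corresponds to an object $P\in\X(M)$, and surjectivity of $p$ (Definition~\ref{surjective_morphism_of_stacks}) yields an open cover $(M_\alpha)_\alpha$ of $M$ such that each $P|_{M_\alpha}$ lies in the essential image of $p_{M_\alpha}$; equivalently, each $M_\alpha\to\X$ is $2$-isomorphic to a composite $M_\alpha\xrightarrow{s_\alpha}X\xrightarrow{\,p\,}\X$.

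Next I would use that $2$-fibre products of stacks are stable under composition (the pasting law), which holds because $\M\times_\X\N$ is characterised by a universal property (Definition~\ref{fibre_product}) and that property is insensitive to replacing a morphism by a $2$-isomorphic one. Over each $M_\alpha$ this gives a canonical identification $\Y\times_\X M_\alpha\simeq(\Y\times_\X X)\times_{X,s_\alpha}M_\alpha$, compatible with the projections to $M_\alpha$, so that $f_{M_\alpha}$ is identified with the base change of $f_X:\Y\times_\X X\to X$ along $s_\alpha$. All the fibre products in sight are representable by manifolds, because $F$ is representable by hypothesis (Definition~\ref{representable_morphism}), so these are genuine morphisms in $\An$; in particular $f_M^{-1}(M_\alpha)=\Y\times_\X M_\alpha$, since $M_\alpha\hookrightarrow M$ is an open immersion.

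It then remains to record the elementary fact that the base change of an open morphism in $\An$, whenever this base change exists in $\An$, is again open: if $g:N\to X$ is open and $t:M\to X$ is any morphism such that $N\times_X M$ is a manifold (hence carries the subspace topology from $N\times M$), then for a basic open set $(U\times V)\cap(N\times_X M)$, with $U\subseteq N$ and $V\subseteq M$ open, its image under the second projection is exactly $V\cap t^{-1}(g(U))$, which is open because $g(U)$ is open in $X$ and $t$ is continuous. Applying this with $g=f_X$ and $t=s_\alpha$ shows that each $f_{M_\alpha}$ is open, hence $f_M$ is open; together with Condition~(1), which is assumed, this is precisely the statement that $\Y$ is an open substack of $\X$.

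The only genuinely delicate point — the step I would be most careful with — is the bookkeeping in the second paragraph: checking that the coherence data $\Phi^{\,\M}_{f,g}$ of the stacks involved really make the pasting identification $\Y\times_\X M_\alpha\simeq(\Y\times_\X X)\times_X M_\alpha$ commute with the structure morphisms, and that passing from $M_\alpha\to\X$ to the $2$-isomorphic composite $p\circ s_\alpha$ does not disturb this. Everything else is either formal manipulation of $2$-categorical data or a one-line point-set argument; in particular, one does not need $p$ to be a submersion for this statement — only its surjectivity (and the representability of $F$) is used.
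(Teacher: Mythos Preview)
Your argument is correct, and it reaches the same conclusion as the paper's proof via a genuinely different (though closely related) route.

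The paper does not pass to an open cover of $M$. Instead, it works globally with the base change $X_M := X\times_\X M$ of the atlas, which is a manifold because $p$ is representable, and is equipped with a surjective submersion $X_M\to M$. The paper then observes the pasting isomorphism $\Y\times_\X X_M \simeq Y_X\times_X X_M$, so that $\Y\times_\X X_M\to X_M$ is open (base change of the hypothesised open $f_X$). Finally, since $\Y\times_\X X_M\to \Y\times_\X M$ is itself a surjective submersion (hence open and surjective, being a pullback of $X_M\to M$), commutativity of the square forces $f_M:\Y\times_\X M\to M$ to be open: one uses that if $h$ is open and surjective and $g\circ h$ is open, then $g$ is open.

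Your version trades this global square for local sections: you invoke only the \emph{surjectivity} of $p$ to produce lifts $s_\alpha:M_\alpha\to X$, and then each $f_{M_\alpha}$ is literally a base change of $f_X$ along $s_\alpha$. As you correctly note in your final paragraph, this avoids using that $p$ is a submersion (or even that $p$ is representable); the paper's argument, by contrast, needs $X_M$ to be a manifold and $X_M\to M$ to be open and surjective, so it really uses the full atlas structure. Your approach is thus marginally more economical in hypotheses, while the paper's is slightly cleaner in that it avoids the bookkeeping of patching over a cover. The ``delicate point'' you flag---compatibility of the pasting isomorphism with the projections and with the $2$-isomorphism replacing $M_\alpha\to\X$ by $p\circ s_\alpha$---is exactly the same coherence issue implicit in the paper's isomorphism $Y_X\times_X X_M\simeq \Y\times_\X X_M$, and is indeed routine once one unwinds Definition~\ref{fibre_product}.
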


\begin{proof}
If $(\Y,F)$ is an open substack of the analytic stack $\X$, Conditions (1) and (2) of Proposition \ref{check_opennness_on_atlas} are indeed met. To prove the converse, it suffices to show that, if $f_X:Y_X\lra X$ is open, then $f_M:Y_M\lra M$ is open for all $M\in\An$, where, for all $S\in\An$, we use the notation $Y_S\in\An$ for the representable stack $\Y\times_\X S$. To prove what we need to, note that, given a morphism of stacks $M\lra \X$, we have a commutative diagram
$$
\begin{tikzcd}
Y_X\times_X X_M \ar[d, "\text{open}"] \ar[r,"\simeq"] & \Y\times_\X X_M \ar[r] \ar[d, "\text{open}"] & \Y\times_\X M \ar[d] \ar[r] & \Y \ar[d]\\
X_M \ar[r, equal] & X_M \ar[r, "\overset{\text{\tiny{surjective}}}{\text{\tiny{submersion}}}"] & M \ar[r] & \X
\end{tikzcd}
$$ where the first vertical arrow is open because $f_X:Y_X\lra X$ is open by the assumption made on $F:\Y\lra \X$ and since the property of being open is invariant by base change. The commutativity of the left square then implies that the second vertical arrow is open. Since $F:\Y\lra \X$ is a representable morphism of stacks, the stack $\Y\times_\X M$ is representable by a manifold, and since the middle square is a pullback diagram, the morphism $Y\times_\X X_M \lra Y\times_\X M$ is also a surjective submersion in $\An$. In particular, it is open and surjective. So the fact that $Y\times_\X X_M\lra X_M$ is open implies that $f:\Y\times_\X M \lra M$ is indeed open.
\end{proof}

We leave it as an exercise to show that, if $F:\Y\lra\X$ is an open substack of an analytic stack $\X$, then $\Y$ is an analytic stack.

\begin{rem}\label{rep_morph_for_analytic_stack}
Some comments are in order. For the sake of simplicity, we have omitted certain aspects of the definition of an analytic stack or of an open substack. For instance, in Definition \ref{analytic_stack_def}, it is often required that, for a stack $\X$ over $\An$ to be an analytic stack, \emph{every} morphism $M\lra\X$ with $M$ in $\An$ should be representable. So Condition (1) of Definition \ref{analytic_stack_def} becomes unnecessary. But of course it then becomes more difficult to show that a given stack over $\An$ is an analytic stack. In practice, one uses the fact that the following two conditions are equivalent (we refer to \cite[Remark 8.1.6 p.170]{Olsson2016} for a proof of this):
\begin{enumerate}
\item Every morphism $M\lra\X$ with $M$ in $\An$ is representable (meaning that, for all $N\lra\X$ with $N\in\An$, the stack $M\times_\X N$ is representable by a manifold).
\item The diagonal morphism $\Delta:\X\lra\X\times\X$ is representable.
\end{enumerate} 
So the more official version of Definition \ref{analytic_stack_def} is that a stack $\X$ on $\An$ is called an analytic stack\index{stack!analytic stack} if the diagonal morphism $\Delta:\X\lra\X\times\X$ is representable and there exists a manifold $X\in\An$ and a (necessarily representable) surjective submersion $p: X \lra \X$. The stack $\X$ is called \emph{separated}\index{stack!separated stack} if the representable morphism $\Delta$ is closed.
\end{rem}

In the same way that we defined what it means to be an open substack of a stack $\X$, we can define what it means to be a \emph{cover} of such a stack. Comments similar to Proposition \ref{check_opennness_on_atlas} and Remark \ref{rep_morph_for_analytic_stack} apply when $\X$ is an \emph{analytic} stack, but we omit them here and only give the elementary definition.

\begin{definition}\label{stacky_cover_def}
Let $\X$ be a stack on $\An$. A morphism of stacks $F:\Y\lra\X$ is called a \emph{cover} or \emph{covering stack}\index{stack!covering stack} of $\X$ if:
\begin{enumerate}
\item $F$ is representable.
\item For all morphism of stacks $M\lra \X$ with $M\in\An$, the morphism $f_M: \Y\times_\X M \lra M$ is a cover in $\An$.
\end{enumerate} Such a cover is called a \emph{finite cover} if, for all $M$, the covering map $f_M$ is a finite cover in $\An$. 
\end{definition}

\begin{example}\label{covers_of_quotient_stack}
If $G$ is a \emph{discrete group} acting by automorphisms on the space $X\in\An$, then the canonical morphism $X\lra [X/G]$ (defined by the trivial cover $(X\times G)\lra X$) is a covering stack of $[X/G]$. Indeed, if $M\in\An$ and $P\lra M$ is the principal $G$-bundle associated to a morphism $M\lra [X/G]$, then $X\times_{[X/G]} M$ is isomorphic to $P$ (see Examples \ref{towards_atlas_for_BG} and \ref{representability_of_the_can_morphism_from_X_to_X_mod_G}), and this is indeed a cover of $M$ because $G$ is discrete. A subgroup $H< G$ then defines an intermediate cover $X\lra [X/H] \lra [X/G]$. And if $Y$ is a $G$-equivariant cover of $X$, then $[Y/G]\lra [X/G]$ is a cover in the stacky sense. Mixing these two examples together, we get the following commutative diagrams, all of whose arrows are covers (the first vertical arrow being a $G$-equivariant cover in $\An$).
$$
\begin{tikzcd}
Y \ar[r] \ar[d, "H< G\ \Circlearrowright"'] & \left[Y/H\right] \ar[r] \ar[d] & \left[Y/G\right] \ar[d] \\
X \ar[r] & \left[X/H\right] \ar[r] & \left[X/G\right] \\
\end{tikzcd}$$ The proof of these claims follows from the same techniques as the ones used in Example \ref{open_substacks_in_quotient_stack}. And for a concrete example, the group $\mu_n(\C)$ of $n$-th roots of unity acts (non-freely) on $\C$ by multiplication by $e^{i\frac{2\pi}{n}}$, and we have a covering stack (or \emph{stacky cover}) $\C\lra[\C/\mu_n(\C)]$. The same construction works if we replace $\C$ by an open disk $D(0,\epsilon)\subset \C$.
\end{example}

Note that a covering stack is in particular an open morphism of stacks because the covering maps $f_M: \Y\times_\X M \lra M$ are all open morphisms in $\An$. We are now ready to give the definition of an orbifold. The intuition if that an analytic stack is an orbifold if it admits an \emph{open} covering of the form $[U/\Gamma]$, where $U\in\An$ and $\Gamma$ is a \emph{finite} group acting on $U$ by automorphisms.

\begin{definition}\label{orbifold_def}\index{orbifold}
An \emph{analytic orbifold} is an analytic stack $\X$ (Definition \ref{analytic_stack_def}) equipped with a family of substacks of the form $\big(F_i:[U_i/\Gamma_i]\lra \X\big)_{i\in I}$ such that:
\begin{enumerate}
\item For all $i\in I$, the pair $(U_i,\Gamma_i)$ consists of a manifold $U_i\in\An$ and a \emph{finite} group $\Gamma_i$ acting on $U_i$ by automorphisms.
\item For all $i\in I$, the substack $F_i:[U_i/\Gamma_i]\lra \X$ is an open substack (Definition \ref{open_substack_def}).
\item The morphism of stacks $F:\sqcup_{i\in I} [U_i/\Gamma_i] \lra \X$ is surjective (Definition \ref{surjective_morphism_of_stacks}). 
\end{enumerate}
\end{definition}

\noindent The stack $\sqcup_{i\in I} [U_i/\Gamma_i]$ appearing in Definition \ref{orbifold_def} is defined, for all $S\in\An$, by $\big(\sqcup_{i\in I} [U_i/\Gamma_i]\big)(S) = \sqcup_{i\in I} \big([U_i/\Gamma_i](S)\big)$. An ultimately equivalent but slightly more abstract definition of an analytic orbifold would be that of an analytic stack equipped with a (surjective) finite cover $F:M\lra \X$ with $M\in\An$. For instance, all manifolds have canonical orbifold structures. Either way, the main source of examples is obtained as an application of the following result, which is due to Thurston in the case when the group $G$ acts effectively on the manifold $X$ \cite[Proposition~13.2.1, p.302]{Thurston_2002}.

\begin{thm}\label{orbifold_quotient_stacks}
Let $X$ be an analytic manifold and let $G$ be a discrete group acting to the right on $X$ by automorphisms. For all $x\in X$, we denote by $$G_x:=\{g\in G\ |\ x\cdot g = x\}$$ the stabiliser of $x$ in $G$. Assume that the following properties are satisfied:
\begin{enumerate}
\item For all $x\in X$, there exists an open neighbourhood $U_x$ of $x$ such that 
\begin{equation*}
g\in G_x \Rightarrow (U_x\cdot g) \subset U_x\quad \text{and}\quad g\notin G_x \Rightarrow (U_x \cdot g) \cap U_x = \emptyset.
\end{equation*}
\item For all $x\in X$, the group $G_x$ is finite.
\end{enumerate}
Then the quotient stack $[X/G]$ is an orbifold.
\end{thm}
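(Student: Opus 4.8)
The plan is to produce, for every point $x\in X$, an explicit open substack of $[X/G]$ of the form $[U_x/G_x]$, and to check that these jointly cover $[X/G]$ in the sense of Definition~\ref{orbifold_def}. Note first that $[X/G]$ is already an analytic stack: a discrete group is a $0$-dimensional complex Lie group, so this is an instance of Example~\ref{quot_stacks_are_analytic}.

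First I would fix $x\in X$ and take $U_x$ as in hypothesis~(1). Applying the implication $g\in G_x\Rightarrow U_x\cdot g\subset U_x$ both to $g$ and to $g^{-1}\in G_x$ forces $U_x\cdot g=U_x$, so $U_x$ is $G_x$-invariant; moreover $\Gamma_x:=G_x$ is finite by hypothesis~(2). The $G_x$-equivariant open inclusion $U_x\hookrightarrow X$ then induces a morphism $F_x\colon[U_x/G_x]\lra[X/G]$ exactly as in Example~\ref{quotient_substack}, and since $G$ is discrete the coset space $G/G_x$ is discrete, so Example~\ref{open_substacks_in_quotient_stack} applies and shows that $F_x$ is an \emph{open} substack of $[X/G]$. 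This already takes care of Conditions~(1) and~(2) of Definition~\ref{orbifold_def}, with the covering family indexed by the points of $X$ (or by a suitable subset thereof).

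The substantive step is surjectivity of $F\colon\sqcup_{x\in X}[U_x/G_x]\lra[X/G]$ (Definition~\ref{surjective_morphism_of_stacks}). Given $S\in\An$ and an object $(P\to S,\,u\colon P\to X)$ of $[X/G](S)$, I would use that $P\to S$ is a covering map, because $G$ is discrete and principal bundles are locally trivial. For $s\in S$, choosing $p\in P$ over $s$ and setting $x:=u(p)$, continuity of $u$ lets me find an evenly covered neighbourhood $S_s$ of $s$ carrying a local section $\sigma\colon S_s\to P$ with image in $u^{-1}(U_x)$. Trivialising $P|_{S_s}\simeq S_s\times G$ via $\sigma$, the equivariant map $u|_{S_s}$ is governed by $s'\mapsto u(\sigma(s'))\in U_x$. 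The claim is then that the trivial $G_x$-bundle $S_s\times G_x$ equipped with the $G_x$-equivariant map $(s',h)\mapsto u(\sigma(s'))\cdot h$ (well defined since $U_x\cdot h=U_x$ for $h\in G_x$) is an object of $[U_x/G_x](S_s)$ whose image under $F_{S_s}$ — namely its extension of structure group $(S_s\times G_x)\times_{G_x}G\simeq S_s\times G$ together with the induced map to $X$ — is isomorphic to $(P|_{S_s},u|_{S_s})$ in $[X/G](S_s)$. Since the $S_s$ cover $S$, this proves $F$ surjective, and then Definition~\ref{orbifold_def} gives that $[X/G]$ is an analytic orbifold.

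I expect the only genuine obstacle to be this last verification: one must track the identification $(S_s\times G_x)\times_{G_x}G\simeq S_s\times G$ through the definition of $F_{S_s}$ in Example~\ref{quotient_substack} and confirm that the resulting maps to $X$ agree on the nose. Everything else — the $G_x$-invariance of $U_x$, the finiteness of $G_x$, the discreteness of $G/G_x$, and the fact that $P\to S$ is a covering — is immediate from the hypotheses together with the earlier examples.
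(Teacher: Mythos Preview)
Your proposal is correct and follows essentially the same route as the paper's proof: use Example~\ref{open_substacks_in_quotient_stack} to see that each $[U_x/G_x]\hookrightarrow[X/G]$ is an open substack, then verify surjectivity by locally trivialising the given principal $G$-bundle and producing an explicit $G_x$-reduction. Your treatment of the surjectivity step is in fact slightly more careful than the paper's, in that you explicitly shrink the trivialising neighbourhood so that the chosen local section lands in $u^{-1}(U_x)$ before building the $G_x$-bundle; the paper leaves this refinement implicit.
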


\begin{proof}
Since, for all $x\in X$, the subset $U_x\subset X$ is $G_x$-invariant and open in $X$, we know from Example \ref{open_substacks_in_quotient_stack} that the quotient stack $[U_x/G_x]$ defines an open substack of $[X/G]$. And since the $(U_x)_{x\in X}$ form an open covering of $X$, the canonical morphism of stacks $F: \sqcup_{x\in X} [U_x/G_x] \lra [X/G]$ is surjective. Indeed, if $S\in\An$ and $(P,u)$ is a pair consisting of a principal $G$-bundle $P$ on $S$ and a $G$-equivariant map $u:P\lra X$, we can take an open covering $(S_i)_{i\in I}$ of $S$ such that, for all $i$, the principal $G$-bundle $P_i:=P|_{S_i}$ is trivial. Let us fix a trivialisation $P_i\simeq (U_i\times G)$ in which $u|_{P_i}:P_i\lra X$ is the action map, and choose $x\in X$ such that $x\in u(P_i)$. Set then $Q_{i,x} := (S_i \times G_x)$. By Condition (1), an element $\xi\in u^{-1}(U_x)\cap P_i$ satisfies $g\cdot\xi\in u^{-1}(U_x)\cap P_i$ if and only if $g\in G_x$, so $u^{-1}(U_x)\cap P_i \simeq Q_{i,x}$. So $P_i$ lies in the essential image of the canonical functor $$F_{x,S_i}: [U_x/G_x](S_i) \lra [X/G](S_i),$$ hence also in the image of $F_S:\sqcup_{x\in X} [U_x/G_x](S_i)\lra [X/G](S_i)$, which proves that the morphism $F: \sqcup_{x\in X} [U_x/G_x]\lra [X/G]$ is surjective in the sense of Definition \ref{surjective_morphism_of_stacks}. Finally, since each $G_x$ is finite by Condition (2), we have indeed shown that $[X/G]$ is an orbifold.
\end{proof}

\begin{example}\label{quotient_by_finite_and_discrete_group}
If $G$ is finite to begin with, we can simply use the finite cover $X\lra[X/G]$ to prove that $[X/G]$ has a canonical orbifold structure. For instance, the quotient stack $[\C/\mu_n(\C)]$ of Example \ref{covers_of_quotient_stack} is canonically an orbifold. We will see in Corollary \ref{modular_orbi_curve} that the action of the discrete group $\SL(2;\Z)$ on the upper half-plane $\fh:=\{z\in\C\ |\ \text{Im}\,z>0\}$ satisfies the conditions of Theorem \ref{orbifold_quotient_stacks}, so the quotient stack $[\fh/\SL(2;\Z)]$ is also an orbifold, in a canonical way.
\end{example}

\begin{rem}
Note that we are not assuming that the action of $G$ on $X$ is effective, only that it has finite stabilisers. In particular, if $G$ is a \emph{finite} group, then the classifying stack $BG=[\bullet/G]$ is an orbifold. When the action of the group $G$ on the space $X$ is effective, the group $G$ can be identified with a subgroup of $\Aut(X)$ and the quotient stack $[X/G]$ is called an \emph{effective orbifold}.\index{orbifold!effective orbifold}
\end{rem}

To construct orbifolds, it suffices to find examples of actions of discrete groups on manifolds for which Conditions (1) and (2) of Theorem \ref{orbifold_quotient_stacks} are satisfied. An important tool to do that is the following result. We state it for a real Lie group $H$, but we are mostly interested here in the case where $H$ is a \emph{Hermitian group}, meaning that the symmetric space $X:=K\backslash H$ has a canonical structure of complex analytic (in fact, Kähler) manifold, in which case the quotient stack $[X/G]$ is indeed an analytic orbifold.

\begin{thm}\cite[Corollary~3.5.11]{Thurston_1997} \label{proper_actions_of_discrete_groups_by_isometries}
Let $H$ be a real Lie group whose Lie algebra admits an $H$-invariant positive definite scalar product. Fix a maximal compact subgroup $K\subset H$ and let us endow the homogeneous space $X:=K\backslash H$ with its canonical manifold structure. Let $G\subset H$ be a discrete subgroup of $H$. Then the canonical right action of $G$ on $X$ satisfies Conditions (1) and (2) of Theorem \ref{orbifold_quotient_stacks}. In particular, the quotient stack $[X/G]$ has a canonical orbifold structure.
\end{thm}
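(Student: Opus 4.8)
The plan is to reduce Conditions~(1) and~(2) of Theorem~\ref{orbifold_quotient_stacks} to the single assertion that $G$ acts \emph{properly} on $X$, and then to deduce that properness from the compactness of $K$, the invariant scalar product on the Lie algebra playing only an auxiliary, contextual role. First I would unpack the geometric structure: an $H$-invariant positive definite scalar product on $\mathfrak{h}$ is in particular $\mathrm{Ad}(K)$-invariant, so the orthogonal complement $\mathfrak{p}$ of the ($\mathrm{Ad}(K)$-stable) subalgebra $\mathfrak{k}$ is $\mathrm{Ad}(K)$-stable too, and under the canonical isomorphism $T_{x_0}X \simeq \mathfrak{h}/\mathfrak{k} \simeq \mathfrak{p}$ (with $x_0 := K \in K\backslash H$ the base point, whose stabiliser for the right action is $K$) the induced inner product is invariant under the linear isotropy action of $K$. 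Translating it around by the right translations of $H$ then yields a well-defined $H$-invariant Riemannian metric $g$ on $X$. In particular $G\subset H$ acts on $(X,g)$ by isometries, and $(X,g)$, being homogeneous, is complete; this is the sense in which the statement concerns discrete groups of isometries. (Only $\mathrm{Ad}(K)$-invariance is used here; the \emph{full} $H$-invariance is what additionally makes $X$ a Riemannian — and, when $H$ is Hermitian, a Kähler — symmetric space, but it is not needed for the orbifold statement itself.)

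Next I would show that the canonical right action of $H$ on $X = K\backslash H$ is proper, i.e.\ that $\{h\in H : (L\cdot h)\cap L \neq\emptyset\}$ is compact for every compact $L\subset X$. The key point is that $\pi\colon H\lra X$ is a principal $K$-bundle with compact fibre $K$, hence a proper map, so $\pi^{-1}(L)$ is compact; if $(L\cdot h)\cap L\neq\emptyset$, choose $x\in L$ with $x\cdot h\in L$ and a lift $\tilde x\in\pi^{-1}(x)\subset \pi^{-1}(L)$, so that $\tilde x h\in\pi^{-1}(L)$ and $h = \tilde x^{-1}(\tilde x h)\in\big(\pi^{-1}(L)\big)^{-1}\cdot\pi^{-1}(L)$, which is compact. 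Since $\{h : (L\cdot h)\cap L\neq\emptyset\}$ is also closed (a tube-lemma argument), it is compact. Restricting a proper action to a closed subgroup gives a proper action, and $G$ is closed in $H$ because it is discrete; hence $G$ acts properly on $X$, which for a discrete group means exactly that $\{g\in G : (L\cdot g)\cap L\neq\emptyset\}$ is finite for every compact $L$. (This is the content one would otherwise quote from \cite[Corollary~3.5.11]{Thurston_1997}; one could instead argue through the homomorphism $G\to\mathrm{Isom}(X,g)$ and the properness of the full isometry group, but then one must check separately that the image of $G$ remains discrete in $\mathrm{Isom}(X,g)$, so the route above is cleaner.)

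Finally I would read off Conditions~(1) and~(2). Condition~(2) is immediate: $G_x\subset \{g\in G : (\{x\}\cdot g)\cap\{x\}\neq\emptyset\}$, which is finite. For Condition~(1), fix $x\in X$ and a relatively compact open neighbourhood $V_0$ of $x$; replacing $V_0$ by $V_1 := \bigcap_{h\in G_x} V_0\cdot h$ (a finite intersection, $G_x$ being finite) gives a relatively compact open, $G_x$-stable neighbourhood of $x$. By properness the set $S := \{g\in G : (\overline{V_1}\cdot g)\cap\overline{V_1}\neq\emptyset\}$ is finite and contains $G_x$; for each $g\in S\setminus G_x$ one has $x\cdot g\neq x$, so by Hausdorffness one picks an open $W_g\ni x$ with $(W_g\cdot g)\cap W_g = \emptyset$ (e.g.\ $W_g := A_g \cap (B_g\cdot g^{-1})$ for disjoint opens $A_g\ni x$, $B_g\ni x\cdot g$). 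Put $W := V_1\cap\bigcap_{g\in S\setminus G_x} W_g$ and $U_x := \bigcap_{h\in G_x} W\cdot h$. Then $U_x$ is an open neighbourhood of $x$ with $U_x\cdot h = U_x$ for $h\in G_x$ (reindexing the finite intersection by the group $G_x$), while for $g\notin G_x$ one has $(U_x\cdot g)\cap U_x\subseteq (W\cdot g)\cap W = \emptyset$, since $W\subseteq W_g$ disposes of $g\in S\setminus G_x$ and $W\subseteq\overline{V_1}$ disposes of $g\notin S$. This is precisely Condition~(1), and the ``in particular'' clause then follows from Theorem~\ref{orbifold_quotient_stacks}, the $H$-invariance of $g$ ensuring in addition that $[X/G]$ is an \emph{analytic} orbifold when $X$ is a complex manifold.

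The step I expect to require the most care is the properness in the second paragraph: the real insight is that the operative hypothesis is the compactness of $K$ — equivalently the properness of $\pi\colon H\to X$ — rather than the metric itself, and that the properness of the $H$-action then descends to the discrete subgroup $G$. Everything after that (finiteness of the stabilisers, the slice-type neighbourhood in Condition~(1), and the symmetrisation over the finite group $G_x$) is routine point-set topology.
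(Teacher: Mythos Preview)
Your proof is correct, but it takes a genuinely different route from the paper's for Condition~(1). The paper, in the remarks following the theorem, obtains the neighbourhood $U_x$ as a \emph{Dirichlet domain} $\mathcal{D}_G(x) := \{y\in X : \forall\,g\notin G_x,\ d(y,x)<d(y,x\cdot g)\}$, and then spends a paragraph proving that this set is open (using that $G$ is countable and acts properly, so any compact meets only finitely many bisectors $L_g(x)$). That argument \emph{requires} the $G$-invariant distance, which is in turn supplied by the $H$-invariant scalar product on the Lie algebra---so in the paper's logic the metric hypothesis is load-bearing, not merely contextual.

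Your argument replaces the Dirichlet domain by a purely point-set-topological slice construction: shrink a relatively compact neighbourhood until the finite set $S$ of ``returners'' is exactly $G_x$, then symmetrise. This works for any proper action of a discrete group on a locally compact Hausdorff space, with no metric in sight, which is why you can (correctly) isolate the compactness of $K$ as the only operative hypothesis. What the paper's approach buys is a more canonical and geometrically meaningful $U_x$ (the Dirichlet domain tiles $X$ under the $G$-action and underlies the classical fundamental-domain picture, e.g.\ for $\SL(2;\Z)$ on~$\fh$); what yours buys is a cleaner proof and a sharper identification of the hypotheses actually needed for the orbifold conclusion. One small caution: your closing sentence that ``the $H$-invariance of $g$ ensur[es] \ldots\ that $[X/G]$ is an \emph{analytic} orbifold when $X$ is a complex manifold'' conflates two things---the analytic structure on $K\backslash H$ comes from $H$ being Hermitian, not from the invariance of the metric per se.
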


\begin{cor}\label{modular_orbi_curve}
Since the upper half-plane $\fh=\{z\in\C\ |\ \mathrm{Im}\,z>0\}$ can be described as the homogeneous space $\mathbf{SO}(2)\backslash\SL(2;\R)$, the quotient stack $[\fh/\SL(2;\Z)]$ has a canonical orbifold structure.
\end{cor}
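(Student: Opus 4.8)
The plan is to obtain the statement as a direct specialisation of Theorem~\ref{proper_actions_of_discrete_groups_by_isometries}, taking for $H$ the real Lie group $\SL(2;\R)$, for $K$ the compact subgroup $\mathbf{SO}(2)$, and for the discrete subgroup $G\subset H$ the modular group $\SL(2;\Z)$. Once one knows that $\mathbf{SO}(2)$ is a \emph{maximal} compact subgroup of $\SL(2;\R)$, that the homogeneous space $\mathbf{SO}(2)\backslash\SL(2;\R)$ is $\SL(2;\R)$-equivariantly diffeomorphic to $\fh$ (with $\SL(2;\R)$ acting on $\fh$ to the right via the homographic transformations~\eqref{action_mod_group}), and that $\SL(2;\Z)$ is a discrete subgroup of $\SL(2;\R)$, the orbifold structure on $[\fh/\SL(2;\Z)]$ is handed to us by the cited theorem, together with the validity of Conditions~(1) and~(2) of Theorem~\ref{orbifold_quotient_stacks} for the $\SL(2;\Z)$-action on $\fh$.

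The verifications are classical. For the first two, one may invoke the Cartan (or Iwasawa $KAN$) decomposition of $\SL(2;\R)$, which simultaneously exhibits $\mathbf{SO}(2)$ as a maximal compact subgroup and identifies $\fh$ with the associated symmetric space; alternatively, one checks directly that formula~\eqref{action_mod_group} defines a transitive right action on $\fh$ (it lands in $\fh$ because $\mathrm{Im}(z\cdot A)=\mathrm{Im}(z)/|cz+d|^2>0$ when $\det A=1$, and it is transitive since every $x+iy$ with $y>0$ is of the form $i\cdot A$ for a suitable triangular $A$), that the stabiliser of $i\in\fh$ computed from~\eqref{action_mod_group} is precisely $\mathbf{SO}(2)$, whence orbit--stabiliser gives the equivariant diffeomorphism $\mathbf{SO}(2)\backslash\SL(2;\R)\simeq\fh$ (and the quotient manifold structure is the standard one on $\fh\subset\C$); finally, since $\fh$, with its Poincaré metric, is complete, simply connected and of non-positive curvature, the Cartan fixed-point theorem shows that every compact subgroup of $\SL(2;\R)$ is contained in some point-stabiliser, so that the maximal compact subgroups are exactly the conjugates of $\mathbf{SO}(2)$. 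Discreteness of $\SL(2;\Z)$ in $\SL(2;\R)$ is immediate, since $\SL(2;\Z)=\SL(2;\R)\cap\mathrm{M}_2(\Z)$ and $\mathrm{M}_2(\Z)$ is discrete in $\mathrm{M}_2(\R)$.

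With these in hand, Theorem~\ref{proper_actions_of_discrete_groups_by_isometries} yields that $[\fh/\SL(2;\Z)]$ is an orbifold; it is moreover an \emph{analytic} orbifold, because $\fh$ is a complex-analytic (indeed K\"ahler) manifold on which the real homographic transformations act by biholomorphisms, so that the local charts $[U_x/(\SL(2;\Z))_x]$ produced by the proof of Theorem~\ref{orbifold_quotient_stacks} are quotient stacks of analytic manifolds by finite groups of analytic automorphisms; this is exactly the Hermitian case singled out before the statement of Theorem~\ref{proper_actions_of_discrete_groups_by_isometries}. I do not expect a genuine obstacle here: the work is essentially bookkeeping. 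The one point calling for care is the matching of conventions---the paper's \emph{right} action~\eqref{action_mod_group} corresponds to letting the transpose of $\begin{pmatrix}a&c\\b&d\end{pmatrix}$ act in the usual left-handed way---which has to be tracked consistently so that $\mathbf{SO}(2)$ really emerges as the isotropy group of $i$ and so that the smooth structure on $\mathbf{SO}(2)\backslash\SL(2;\R)$ coincides with that of $\fh$.
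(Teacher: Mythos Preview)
Your proposal is correct and follows essentially the same route as the paper: the corollary is obtained by specialising Theorem~\ref{proper_actions_of_discrete_groups_by_isometries} to $H=\SL(2;\R)$, $K=\mathbf{SO}(2)$, $G=\SL(2;\Z)$, and the paper relegates the supporting verifications (existence of an invariant metric, the Hermitian/analytic structure on $\fh$) to Remarks~\ref{properly_discont_actions}--\ref{isometric_action} rather than spelling them out as you do. Your explicit checks of transitivity, of the stabiliser of $i$, and of the discreteness of $\SL(2;\Z)$ are more detailed than what the paper provides, but the underlying argument is the same.
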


Let us make a few remarks on the proof of Theorem \ref{proper_actions_of_discrete_groups_by_isometries}. Remarks \ref{properly_discont_actions} and \ref{Dirichlet_domain} show that \emph{proper, isometric} actions of \emph{discrete subgroups of Lie groups} on Riemannian manifolds give rise to orbifolds. Together with Remark \ref{isometric_action}, they can be formalised into a proof of Theorem \ref{proper_actions_of_discrete_groups_by_isometries} and Corollary \ref{modular_orbi_curve}.

\begin{rem}\label{properly_discont_actions}
What is actually proven in \cite[Corollary~3.5.11]{Thurston_1997} is that, if $G\subset H$ is a discrete subgroup of the Lie group $H$, the action of $G$ on $K\backslash H$ (or, as a matter of fact, on all manifold $X$ on which $H$ acts transitively with \emph{compact} stabilisers $H_x$) satisfies the following condition:
\begin{itemize}
\item[(P)] For all compact subset $C\subset X$, the set $\{g\in G\ |\ C\cdot g \cap C \neq \emptyset\}$ is finite.
\end{itemize} An (effective) action of a group $G$ by homeomorphisms on a locally compact space $X$ is called \emph{properly discontinuous}\index{properly discontinuous action} if it satisfies Property (P); see \cite[Definition~3.5.1.(v), p.153]{Thurston_1997}. If $G$ is endowed with the \emph{discrete} topology, this is equivalent to asking that the continuous map $X\times G\lra X\times X$ given by $(x,g)\lmt (x,x\cdot g)$ be \emph{proper}. So giving a properly discontinuous action on a locally compact space is equivalent to giving a proper, continuous action of a discrete group on that space \cite[Exercise~3.5.2, p.154]{Thurston_1997}. Note that, for such an action, the stabilisers $G_x$ are finite, so Condition (2) of Theorem \ref{orbifold_quotient_stacks} is indeed satisfied. 
\end{rem}

\begin{rem}\label{Dirichlet_domain} To prove Theorem \ref{proper_actions_of_discrete_groups_by_isometries}, we still need to show that if a discrete group $G$ acts continuously and properly on a manifold $X$, then, as stated in the proof of \cite[Proposition~13.2.1, p.302]{Thurston_2002}, for all $x\in X$, there exists an open neighbourhood $U_x$ of $x$ in $X$ which is $G_x$-invariant and disjoint from its translates by elements of $G$ that are not in $G_x$ (i.e.\ Condition (1) of Theorem \ref{orbifold_quotient_stacks} is also satisfied). In fact, to prove Theorem \ref{proper_actions_of_discrete_groups_by_isometries}, it suffices to do it in the case when there is a $G$-invariant distance $d$ on $X$ (i.e.\ the group $G$ acts by isometries on $X$). And in that case we can take $U_x$ to be a \emph{Dirichlet domain}\index{Dirichlet domain} $$U_x = \mathcal{D}_G(x) := \{y\in X\ |\ \forall\,g\notin G_x, d(y,x)<d(y,x\cdot g)\}.$$ Such a set is $G_x$-invariant because, if $y\in \mathcal{D}_G(x)$ and $h\in G_x$, the $G$-invariance of $d$ gives, for all $g\notin G_x$, $$d(y\cdot h, x\cdot g) = d(y, x\cdot gh^{-1}) > d(y,x) = d(y,x\cdot h^{-1}) = d(y\cdot h,x).$$ The set $\mathcal{D}_G(x)$ is also  disjoint from its translates by elements of $G$ that are not in $G_x$ because, if $g\notin G_x$, then we cannot have both $y\in\mathcal{D}_G(x)$ and $(y\cdot g)\in \mathcal{D}_G(x)$, as this would imply that $$d(y,x) < d(y,x\cdot g^{-1}) = d(y\cdot g,x) < d(y\cdot g, x \cdot g) = d(y,x).$$ So there only remains to prove that $\mathcal{D}_G(x)$ is open. For the sake of completeness, we sketch here the arguments to be found for instance in \cite[Theorem~9.4.2, pp.227--228]{Beardon}. Recall that $x$ is fixed throughout. For all $g\notin G_x$, set $$H_g(x):=\{y\in X\ |\ d(y,x)<d(y,x\cdot g)\}\quad \text{and}\quad L_g(x) := \{y\in X\ |\ d(y,x) = d(y,x\cdot g)\}.$$ In particular, $H_g(x)$ is open, $L_g(x)$ is closed and $\mathcal{D}_G(x) = \cap_{g\notin G_x} H_g(x)$. Since $G$ is a discrete subgroup of a Lie group, it is countable. And since $G$ acts properly on $X$, a compact set $C\subset X$ can only intersect a finite number of $L_g(x)$. Indeed, this is a direct consequence of the fact that, if $G=\{g_0,g_1,\,\ldots\}$, then $$\forall\,n\ |\ g_n\notin G_x, d\big(x,L_{g_n}(x)\big) = \textstyle{\frac{1}{2}} d\big(x,x\cdot g_n\big) \underset{n\to+\infty}{\lra} +\infty.$$ Now let us fix $y\in \mathcal{D}_G(x) = \cap_{g\notin G_x} H_g(x)$ and consider the compact disk $D[y;\epsilon]$ centred at $y$. As we have just seen, the set $$\{g\notin G_x\ |\ D[y;\epsilon]\cap L_g(x)\not=\emptyset\} =: \{g_1,\,\ldots\,,g_r\}$$ is finite. Since $y\in H_{g}(x)$, we have that, for all $i\in\{1,\,\ldots\,,r\}$, there exists an integer $n_i\geq 1$ such that the open disk $D(y,\frac{\epsilon}{2^{n_i}})$ has an empty intersection with $L_{g_i}(x)$, and is therefore contained in $H_{g_i}(x)$. So, we can find an open neighbourhood $U_y := \cap_{i=1}^r D(y_i,\frac{\epsilon}{2^{n_i}})$ such that, for all $g\notin G_x$, $U_y\subset H_g(x)$, i.e.\ $U_y\subset \mathcal{D}_G(x)$, proving that $\mathcal{D}_G(x)$ is indeed open.
\end{rem}

\begin{rem}\label{isometric_action} Note that, in the situation of Theorem \ref{proper_actions_of_discrete_groups_by_isometries}, a $G$-invariant distance exists on $X=K\backslash H$ because $H$ possesses a left-invariant Riemannian metric. Indeed we have assumed that the Lie algebra of $H$ admits an \emph{$H$-invariant} positive definite scalar product (this holds for instance if $H$ is a real form of a \emph{reductive} complex Lie group, such as $H=\SL(2;\R)$ in Corollary \ref{modular_orbi_curve}). Note that the complex analytic structure of $\fh$ indeed comes from its description as the homogeneous space $\mathbf{SO}(2)\backslash \SL(2;\R)$ because of the exceptional isomorphism between $\SL(2;\R)$ and the Hermitian real form $\mathbf{SU}(1,1)$ of $\SL(2;\C)$.
\end{rem}

\begin{rem}
As explained after the proof of Corollary 3.5.11 in \cite[p.157]{Thurston_1997}, \emph{if $X$ is a space of the type considered in Theorem \ref{proper_actions_of_discrete_groups_by_isometries}}, Condition (P) in Remark \ref{properly_discont_actions} admits equivalent characterisations, all of them given in \cite[Definition~3.5.1.(ii)-(v), pp.153--154]{Thurston_1997}. For instance the fact that $G$ is a discrete subgroup of $H$, or the fact that the action is \emph{wandering}, in the following sense:
\begin{itemize}
\item[(W)] For all $x\in X$, there is an open neighbourhood $U_x$ of $x$ such that the set $\{g\in G\ |\ U_x\cdot g \cap U_x \neq \emptyset\}$ is finite.
\end{itemize} If $X$ is locally compact, then (P) $\Rightarrow$ (W). And if $X$ is a metric space such that $G$ acts on $X$ \emph{by isometries}, then (W) $\Rightarrow$ (P). We shall not need this notion in what follows.
\end{rem}

\section{Moduli spaces}\label{moduli_spaces_section}

\subsection{Coarse moduli spaces and fine moduli spaces}

It is sometimes desirable to approximate an analytic stack by a space, in an appropriately optimal sense that we now define.

\begin{definition}\label{CMS}\index{moduli space!coarse moduli space}
Let $\X$ be an analytic stack. A \emph{coarse moduli space} for $\X$ is a pair $(M,\pi)$ where $M\in\An$ and $\pi:\X\lra M$ is a morphism of stacks satisfying the following universal property:
$$\text{For\ all\ morphism\ of\ stacks}\ \phi:\X\lra N\ \text{with}\ N\in\An, \exists!\, \overline{\phi}:M\lra N\ |\ \overline{\phi}\circ\pi=\phi.$$
\end{definition}

Note that the morphism $\overline{\phi}:M\lra N$ is a morphism of stacks between two manifolds, so by the Yoneda lemma \ref{Yoneda} it is induced by a uniquely defined morphism of analytic manifolds. The condition that $(M,\pi)$ be a coarse moduli space for $\X$ is usually summed up by the commutativity of the following diagram.

$$
\begin{tikzcd}
\X \ar[r, "\phi"] \ar[d, "\pi"] & N\\
M \ar[ru, "\exists!\,\overline{\phi}"', dashed] & 
\end{tikzcd}
$$

Sometimes, an extra condition is added, namely the fact that the functor $\pi_\bullet:\X(\bullet)\lra M(\bullet)\simeq M$ be surjective, or in other words, that there be a surjective map from the set $\X(\bullet)/\negmedspace\sim$ of isomorphism classes of objects parameterised by the stack $\X$ to the underlying set of the manifold $M$. Note however that asking for this map to be bijective (as opposed to just surjective) might be too restrictive (insofar as, in some examples, the universal property and the surjectivity condition are satisfied, but not the bijectivity condition). Typically, this happens when $\X=[X/G]$ and the action of $G$ on $X$ has non-closed orbits (see Example \ref{CMS_for_quotient_stacks_examples}). To understand this, we first need to study the existence of coarse moduli spaces for quotient stacks. The key observation is the following result.

\begin{prop}\label{univ_ppty_of_quot_stacks}
Let $G$ be a complex Lie group acting to the right on an analytic manifold $X$. Then, the canonical morphism $p:X\lra[X/G]$ defines a bijection $\phi\lmt \phi\circ p$ between morphisms of stacks $\phi$ from $[X/G]$ to a manifold $N\in\An$ and $G$-invariant morphisms $\psi:X\lra N$ in $\An$:
\begin{equation*}
\left\{
\text{morphisms\ of\ stacks}\ 
\phi:[X/G]\lra N
\right\}
\quad \simeq \quad
\left\{
G\text{-invariant\ morphisms}\ \psi:X\lra N\ \text{in}\ \An
\right\}
\end{equation*} More precisely, given a $G$-invariant morphism $\psi:X\lra N$ in $\An$, there is a unique morphism of stacks $\phi:[X/G]\lra N$ such that $\phi\circ p = \psi$.
\end{prop}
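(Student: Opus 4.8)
The plan is to exploit the fact, established in Example~\ref{towards_Yoneda} and its consequence the Yoneda lemma~\ref{Yoneda}, that a morphism of stacks out of a representable stack $\underline{S}$ is the same data as an object of the target groupoid evaluated at $S$; combined with the concrete description of $[X/G](S)$ as pairs $(P,u)$ with $P\to S$ a principal $G$-bundle and $u:P\to X$ a $G$-equivariant map, this reduces the statement to an explicit bijection between functors and invariant maps. First I would fix a $G$-invariant morphism $\psi:X\lra N$ in $\An$ and construct the morphism of stacks $\phi:[X/G]\lra N$ it induces: for each $S\in\An$, an object of $[X/G](S)$ is a pair $(P,u)$ as above, and I set $\phi_S(P,u)$ to be the morphism $S\lra N$ obtained by descending $\psi\circ u:P\lra N$ along the surjective submersion $P\lra S$. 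This descent is legitimate precisely because $\psi$ is $G$-invariant: $\psi\circ u$ is constant on $G$-orbits in the fibres of $P\lra S$, hence factors (uniquely, as $P\lra S$ admits local sections) through $S$. One then checks that $\phi_S$ is compatible with pullbacks, i.e.\ that Diagram~\eqref{2_comm_diag} $2$-commutes, which is immediate since pulling back $(P,u)$ along $f:T\lra S$ and then descending agrees with descending and then composing with $f$; and the compatibility with the $\Phi$'s is automatic because everything is built from the canonical identifications for pullback bundles. By construction $\phi\circ p=\psi$, since $p$ corresponds to the trivial bundle $X\times G\lra X$ with action map to $X$, whose image under $\phi_X$ is $\psi$ itself.

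Conversely, given a morphism of stacks $\phi:[X/G]\lra N$, I would set $\psi:=\phi\circ p:X\lra N$ and check that $\psi$ is $G$-invariant. This is where the main content lies: I would use that the two maps $X\times G\rightrightarrows X$ given by the projection and by the action become identified after composing with $p$, because the principal bundle $(X\times G)\times G\lra X\times G$ pulls back the same way under both, carrying compatible $G$-equivariant maps to $X$; equivalently, there is a $2$-isomorphism in $[X/G](X\times G)$ between $\mathrm{pr}^*(X\times G, \mathrm{act})$ and $\mathrm{act}^*(X\times G,\mathrm{act})$, so applying $\phi$ forces $\psi\circ\mathrm{pr}=\psi\circ\mathrm{act}$, which is exactly $G$-invariance of $\psi$. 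Then I would verify that the two assignments are mutually inverse: starting from $\psi$, building $\phi$, and restricting along $p$ recovers $\psi$ by the computation above; starting from $\phi$, forming $\psi=\phi\circ p$, and rebuilding a morphism $\phi'$, one checks $\phi'=\phi$ by evaluating both on an arbitrary $(P,u)$ over $S$ and using that $P\lra S$ is a covering in the sense that it admits local sections: locally on $S$ the bundle is trivial, so locally $\phi_S(P,u)$ is determined by $\psi$, and the gluing axiom for the stack $N$ (Definition~\ref{def_stack}, part~(1)) forces the two to agree globally.

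The last sentence of the Proposition — uniqueness of $\phi$ given $\psi$ — is then just the injectivity half of the bijection, which the argument above already supplies. The main obstacle, as indicated, is the $G$-invariance of $\phi\circ p$: one must resist the temptation to argue pointwise on the coarse space and instead work with the tautological $2$-isomorphism living over $X\times G$ inside the groupoid $[X/G](X\times G)$, and then feed it through the functoriality of $\phi$. Everything else is a routine check that the descent construction is functorial in $S$ and compatible with the structural data of the prestack $[X/G]$, using throughout the canonical identifications $(f\circ g)^*P = g^*(f^*P)$ that made $[X/G]$ a prestack in the first place.
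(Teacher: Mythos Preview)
Your proposal is correct and follows essentially the same approach as the paper: both descend $\psi\circ u$ along $P\to S$ to define $\phi_S(P,u)$, and both obtain the $G$-invariance of $\phi\circ p$ from the $2$-commutativity of the square with $X\times G$ over $[X/G]$ (the paper packages this as Diagram~\eqref{pullback_diag_for_triv_bdle}, you phrase it as a $2$-isomorphism in $[X/G](X\times G)$). Your treatment is slightly more thorough in explicitly verifying that the two assignments are mutually inverse via local triviality and the gluing of morphisms in $\underline{N}$, whereas the paper leaves this implicit in its claim of uniqueness.
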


Proposition \ref{univ_ppty_of_quot_stacks} says that the quotient stack $[X/G]$ satisfies the following universal property with respect to $G$-invariant morphisms $ \psi: X \lra N$.

$$
\begin{tikzcd}
X \ar[r, "\psi"] \ar[d, "p"] & N\\
\left[X/G\right] \ar[ru, "\exists!\,\phi"', dashed] & 
\end{tikzcd}
$$

\begin{proof}[Proof of Proposition \ref{univ_ppty_of_quot_stacks}]
First note that, as in Example \ref{open_substacks_in_quotient_stack}, we have a $2$-Cartesian diagram 
\begin{equation}\label{pullback_diag_for_triv_bdle}
\begin{tikzcd}
X \times_{\left[X/G\right]} X \simeq X\times G \ar[r,"\text{action}"] \ar[d, "pr_1"] & X\ar[d, "p"] \\
X \ar[r, "p"'] & \left[X/G\right]
\end{tikzcd}
\end{equation} whose commutativity can be taken as a definition that the morphism of stacks $p:X\lra[X/G]$ is $G$-invariant. The commutativity property also implies that, if $\phi:[X/G]\lra N$ is a morphism of stacks whose target stack is a manifold $N\in\An$, then the morphism $\psi:=\phi\circ p:X\lra N$ is $G$-invariant as a morphism in $\An$. Now assume that a $G$-invariant morphism $\psi:X\lra N$ has been given in $\An$. We will show that, conversely, there is a unique morphism of stacks $\phi:[X/G]\lra N$ such that $\phi\circ p=\psi$, .i.e.\ that the universal property depicted in Diagram \eqref{univ_ppty_of_quotient_stacks_diagram} holds for the quotient stack $[X/G]$.

\begin{equation}\label{univ_ppty_of_quotient_stacks_diagram}
\begin{tikzcd}
X\times G \ar[r,"\text{action}"] \ar[d, "pr_1"] & X\ar[d, "p"] \ar[rdd, bend left=20, "\psi"]& \\
X \ar[r, "p"'] \ar[rrd, bend right=20, "\psi"] & \left[X/G\right] \ar[dr, dashed, "\exists!\,\phi"'] & \\
& & N
\end{tikzcd}
\end{equation}

\noindent Recall that to define $\phi$ is to define $\phi_S:[X/G](S)\lra \ul{N}(S)$ for all $S\in\An$. But an object in $[X/G](S)$ is a pair $(P,u)$ consisting of a principal $G$-bundle $P$ on $S$ and a $G$-equivariant map $u:P\lra X$. So we have a $G$-invariant morphism $\psi\circ u:P\lra N$ in $\An$. Since $P$ is a principal $G$-bundle on $S$, the morphism $\psi\circ u$ factors into a unique morphism $f:S\simeq P/G\lra N$ (by the universal property of the quotient $P/G$). 
$$
\begin{tikzcd}
P \ar[d] \ar[r, "u"] & X \ar[r,"\psi"] & N\\
S \ar[rru, dashed, "\exists\,!f"'] & &
\end{tikzcd}
$$
The morphism $\phi_S:[X/G](S)\lra \ul{N}(S)$ is then defined by $\phi_S(P,u) := f$.
\end{proof}

As an immediate consequence of Proposition \ref{univ_ppty_of_quot_stacks}, we get the following special case of Definition \ref{CMS}: \emph{for quotient stacks, the notion of coarse moduli space coincides with the classical notion of categorical quotient}. In particular, \textit{depending on the action, they may or may not exist}.

\begin{cor}\label{CMS_and_cat_quot}\index{quotient!categorical quotient}
Let $G$ be a complex Lie group acting to the right on a complex analytic manifold $X$. The quotient stack $[X/G]$ admits a coarse moduli space if and only if the action of $G$ on $X$ admits a \emph{categorical quotient}, i.e.\ if there exists a pair $(M,\rho)$ where $M\in\An$ and $\rho:X\lra M$ is a morphism of analytic manifolds such that:
\begin{enumerate}
\item The map $\rho$ is $G$-invariant: for all $g\in G$ and all $x\in X$, $\rho(x\cdot g) = \pi(x)$.
\item For all $G$-invariant morphism of analytic manifolds $\psi: X \lra N$, there exists a unique morphism of analytic manifolds $\overline{\psi}:M\lra N$ such that the following diagram is commutative:
\begin{equation}\label{univ_ppty_quot_space}
\begin{tikzcd}
X \ar[r, "\psi"] \ar[d, "\rho"] & N\\
M \ar[ru, "\exists!\,\overline{\psi}"', dashed] & 
\end{tikzcd}
\end{equation}
\end{enumerate}
\end{cor}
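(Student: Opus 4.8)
The plan is to deduce the corollary almost formally from Proposition \ref{univ_ppty_of_quot_stacks} together with the Yoneda lemma \ref{Yoneda}, which identifies morphisms of stacks between (the stacks associated to) two manifolds with morphisms of analytic manifolds. The point is that Proposition \ref{univ_ppty_of_quot_stacks} replaces the groupoid of morphisms $[X/G]\lra N$ by the set of $G$-invariant morphisms $X\lra N$ in $\An$, compatibly with composition with the canonical morphism $p:X\lra[X/G]$, so that the universal property defining a coarse moduli space for $[X/G]$ translates verbatim into the universal property defining a categorical quotient of the $G$-action on $X$. The only care needed is to keep track of the passage, via Lemma \ref{Yoneda}, between a morphism of manifolds $M\lra N$ and the associated morphism of stacks $\underline{M}\lra\underline{N}$.

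First I would treat the forward implication. Assume $(M,\pi)$ is a coarse moduli space for $[X/G]$ and set $\rho:=\pi\circ p:X\lra M$, which is $G$-invariant by the commutativity of the square \eqref{pullback_diag_for_triv_bdle} (equivalently, by Proposition \ref{univ_ppty_of_quot_stacks} applied to $\pi$). Given a $G$-invariant morphism $\psi:X\lra N$ in $\An$, Proposition \ref{univ_ppty_of_quot_stacks} produces a unique morphism of stacks $\phi:[X/G]\lra N$ with $\phi\circ p=\psi$; the coarse moduli space property then yields a unique $\overline{\phi}:M\lra N$ (a morphism of stacks between manifolds, hence a morphism in $\An$ by Lemma \ref{Yoneda}) with $\overline{\phi}\circ\pi=\phi$, and composing with $p$ gives $\overline{\phi}\circ\rho=\phi\circ p=\psi$. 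For uniqueness of $\overline{\phi}$ as a factorisation of $\psi$ through $\rho$: if $\overline{\phi}'\circ\rho=\psi$, then the morphism of stacks $\overline{\phi}'\circ\pi$ satisfies $(\overline{\phi}'\circ\pi)\circ p=\psi=\phi\circ p$, so the injectivity part of Proposition \ref{univ_ppty_of_quot_stacks} forces $\overline{\phi}'\circ\pi=\phi$, and uniqueness in Definition \ref{CMS} then gives $\overline{\phi}'=\overline{\phi}$. Hence $(M,\rho)$ is a categorical quotient.

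For the converse, assume $(M,\rho)$ is a categorical quotient. By Proposition \ref{univ_ppty_of_quot_stacks}, the $G$-invariant morphism $\rho$ corresponds to a unique morphism of stacks $\pi:[X/G]\lra M$ with $\pi\circ p=\rho$, and I claim $(M,\pi)$ is a coarse moduli space. Given any morphism of stacks $\phi:[X/G]\lra N$ with $N\in\An$, the morphism $\psi:=\phi\circ p:X\lra N$ is $G$-invariant, so the categorical quotient property gives a unique $\overline{\psi}:M\lra N$ in $\An$ with $\overline{\psi}\circ\rho=\psi$. Viewing $\overline{\psi}$ as a morphism of stacks via Lemma \ref{Yoneda}, the composite $\overline{\psi}\circ\pi$ satisfies $(\overline{\psi}\circ\pi)\circ p=\overline{\psi}\circ\rho=\psi=\phi\circ p$, so the injectivity in Proposition \ref{univ_ppty_of_quot_stacks} gives $\overline{\psi}\circ\pi=\phi$; and if $\overline{\phi}'$ is another morphism with $\overline{\phi}'\circ\pi=\phi$, composing with $p$ and invoking uniqueness in the categorical quotient shows $\overline{\phi}'=\overline{\psi}$. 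This establishes the universal property of Definition \ref{CMS}.

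The entire content of the argument is thus carried by Proposition \ref{univ_ppty_of_quot_stacks}, and the main (mild) obstacle is purely bookkeeping: one must consistently pass back and forth between a morphism of manifolds and the corresponding morphism of stacks through the Yoneda lemma, and invoke both the surjectivity (for existence of factorisations) and the injectivity (for their uniqueness) halves of the bijection in Proposition \ref{univ_ppty_of_quot_stacks} at the appropriate places. There is no analytic or geometric difficulty here: the existence of a categorical quotient is of course not automatic, but that is exactly the content of the caveat that such moduli spaces "may or may not exist" and is not something that needs to be proved.
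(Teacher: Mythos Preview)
Your proof is correct and follows exactly the approach the paper intends: the paper presents this corollary as an immediate consequence of Proposition~\ref{univ_ppty_of_quot_stacks} and gives no further argument, while you have carefully written out both directions of the equivalence using precisely that bijection together with Lemma~\ref{Yoneda}. The bookkeeping you flag (tracking existence and uniqueness via the two halves of the bijection in Proposition~\ref{univ_ppty_of_quot_stacks}) is handled cleanly and is all that the corollary requires.
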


A particularly good case is when $M=X/G$, i.e.\ when the orbit space $X/G$ is a categorical quotient. This happens when the orbit space satisfies the conditions of the following definition.

\begin{definition}\label{geom_quot_def}\index{quotient!geometric quotient}
Let $G$ be a complex Lie group acting to the right on an analytic manifold $X$ and denote by $ \rho : X \lra X/G $ the canonical projection. If the orbit space $X/G$, endowed with the quotient topology, admits a structure of complex analytic manifold with respect to which the holomorphic functions on an open subset $ U \subset X/G $ correspond, via pullback by $\rho$, to $ G $-invariant holomorphic functions on the open subset $\rho^{-1}(U) \subset X $, then $X/G$ is called a \textit{geometric quotient} for the $ G $-action on $ X $.
\end{definition}

It can be checked that a geometric quotient is a categorical quotient: it necessarily satisfies the universal property \eqref{univ_ppty_quot_space}. An example of a situation in which a geometric quotient exists is the case when $G$ acts freely and properly on $ X $ (see Example \ref{proper_free_action}). Note that, since the points of $X/G$ are closed and the canonical projection $\rho:X\lra X/G$ is continuous, \textit{a geometric quotient can only exist if the $G$-orbits in $X$ are closed}, and if there are non-closed orbits, then the points of a categorical quotient (if it exists) cannot be in bijection with the orbits of the action (see Example \ref{CMS_for_quotient_stacks_examples}).  There are cases in which the orbits are closed and the action is not free but a geometric quotient exists: this is in fact what happens for the action of the modular group $\SL(2;\Z)$ on the upper half-plane $ \fh $ (see also Example \ref{roots_of_unity_acting_by_rotation}).

\begin{example}\label{proper_free_action}
If a complex Lie group $ G $ acts freely and properly on a complex analytic manifold $ X $, then the orbit space $ X / G $ admits a structure of complex analytic manifold that turns the canonical projection $ \rho : X \lra X/G $ into a holomorphic principal $ G $-bundle. In particular, the map $ \rho $ has local sections and holomorphic functions on $ X / G $ do indeed correspond to $ G $-invariant holomorphic functions on $ X $, so $ X / G$ is indeed a geometric quotient when $ G $ acts freely and properly on $ X $. As a matter of fact, we will see that more is true in that case: when the action is proper and free, the orbit space $ X / G $ is a \textit{fine} moduli space for the quotient stack $[X/G]$ (see Definition \ref{fine_mod_spaces_def} and Proposition \ref{fine_mod_space_for_quot_stacks}).
\end{example}

\begin{example}\label{CMS_for_quotient_stacks_examples}
Let $ G = \C^* $ act on $ X = \C^2 $ via $ t \cdot ( z_1, z_2 )  := ( t \cdot z_1, t^{-1} \cdot z_2 ) $. The closed orbits of this action are the origin $\{(0,0)\}$, which is a fixed point of the action, and the hyperbolas of equation $ z_1 z_2 = \lambda $, for all $ \lambda\in\C^*$. The origin is contained in the closure of the only two non-closed orbits, which are the real and imaginary axes minus the origin. In particular, a $G$-invariant morphism $\psi: X \lra N$ will take the same value on these three orbits, which will therefore be identified in the categorical quotient. This gives us the intuition that the categorical quotient in this example should be the set of closed orbits of the $G$-action, which is in bijection with $\C$ via the $ G $-invariant function $\rho: \C^2 \lra \C$ sending $( z_1, z_2 )$ to $z_1z_2$. By construction, this set has strictly less elements than the whole set of orbits of the $ G $-action. Sometimes, it can even have drastically less elements: if instead of the above action, we let $ G = \C^* $ act on $ X = \C^2 $ via $ t \cdot ( z_1, z_2 )  = ( t \cdot z_1, t \cdot z_2 ) $, then the only closed orbit is $\{(0,0)\}$, which gives us the intuition that the categorical quotient will be a point in this case.
\end{example}

In Section \ref{invariant_theory_section}, we will give sufficient conditions for the existence of categorical and geometric quotients for actions of \emph{reductive} complex Lie groups. As we shall see, in certain cases, a geometric quotient $X/G$ can exist even if the action of $ G $ on $ X $ is not free: a simple example of this is given in Example \ref{roots_of_unity_acting_by_rotation}. We end this section with the notion of fine moduli space, which is the case when the stack $\chi$ is not only approximated by a manifold $M$, but actually represented by it.

\begin{definition}\label{fine_mod_spaces_def}\index{moduli space!fine moduli space}
Let $\X$ be an analytic stack. A \emph{fine moduli space} for $\X$ is a complex analytic manifold $M$ such that $ \X \simeq M$. 
\end{definition}

It is readily checked that a fine moduli space is also a coarse moduli space for $ \X $. As an example, let us study the existence of fine moduli spaces for a quotient stack $\X = [X/G]$. 

\begin{prop}\label{fine_mod_space_for_quot_stacks}
Let $G$ be a complex analytic group acting to the right on a complex analytic manifold $X$. The quotient stack $[X/G]$ admits a fine moduli space if and only if $G$ acts freely and properly on $ G $. In that case, the canonical projection $p:X \lra X/G$ defines a holomorphic principal $G$-bundle.
\end{prop}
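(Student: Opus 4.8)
The plan is to prove the two implications separately, using Proposition~\ref{univ_ppty_of_quot_stacks}, the pullback square \eqref{pullback_diag_for_triv_bdle}, and Example~\ref{proper_free_action} as the main inputs. Note first that there is an evident typo: the action should be on $X$, not on $G$.

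For the ``if'' direction, suppose $G$ acts freely and properly on $X$. Then, by Example~\ref{proper_free_action}, the orbit space $X/G$ carries a canonical complex analytic structure for which the canonical projection $\rho:X\lra X/G$ is a holomorphic principal $G$-bundle; this already establishes the final assertion of the statement, so it remains to produce an isomorphism of stacks $[X/G]\simeq\underline{X/G}$. The pair $(\rho:X\lra X/G,\ \id_X)$ is an object of $[X/G](X/G)$ — a principal $G$-bundle on $X/G$ together with a $G$-equivariant map to $X$ — so by Example~\ref{towards_Yoneda} it corresponds to a morphism of stacks $\Theta:\underline{X/G}\lra[X/G]$ whose component at $S\in\An$ sends $f:S\lra X/G$ to the pulled-back family $(f^{*}X,\mathrm{pr}_2)$. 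I would then check that each $\Theta_S$ is an equivalence of groupoids. For essential surjectivity, given $(P,u)\in[X/G](S)$, the composite $\rho\circ u:P\lra X/G$ is $G$-invariant, hence factors through a unique map $\bar u:S\simeq P/G\lra X/G$; the resulting $G$-equivariant square over $\bar u$ induces, by the universal property of $\bar u^{*}X=S\times_{X/G}X$, a $G$-equivariant morphism of principal $G$-bundles $P\lra\bar u^{*}X$ over $S$, which is automatically an isomorphism and is compatible with the maps to $X$, so $(P,u)\simeq\Theta_S(\bar u)$. For full faithfulness, since $\underline{X/G}(S)$ has a morphism between $f$ and $f'$ only when $f=f'$, it suffices to show that if $(P,u)$ and $(P',u')$ both lie over the same $\bar u$ then there is a unique bundle isomorphism $P\lra P'$ intertwining $u$ and $u'$: existence follows from composing the two isomorphisms to $\bar u^{*}X$, and uniqueness follows because under $P'\simeq\bar u^{*}X=S\times_{X/G}X$ the pair consisting of the bundle projection and $u'$ becomes the canonical inclusion $S\times_{X/G}X\hookrightarrow S\times X$, which is injective. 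Hence $[X/G]\simeq\underline{X/G}$, so $X/G$ is a fine moduli space.

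For the ``only if'' direction, suppose $[X/G]\simeq\underline M$ for some $M\in\An$. Composing the atlas $p:X\lra[X/G]$ of Example~\ref{quot_stacks_are_analytic} with this equivalence gives a morphism $X\lra\underline M$, i.e.\ a holomorphic map $\bar p:X\lra M$; since $p$ is a surjective submersion of stacks and these properties are transported along the equivalence, $\bar p$ is a surjective submersion of manifolds, so the ordinary fibre product $X\times_M X$ exists in $\An$ and represents $X\times_{\underline M}X$. On the other hand, the pullback square \eqref{pullback_diag_for_triv_bdle} identifies $X\times_{[X/G]}X\simeq X\times G$ over $X$ via $(x,g)\lmt(x,x\cdot g)$. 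Transporting this identification through the equivalence $[X/G]\simeq\underline M$ and tracking the two projections to $X$, I obtain a biholomorphism $X\times G\simeq X\times_M X$ compatible with the maps to $X\times X$. Because $M$ is a manifold, its diagonal is closed in $M\times M$, so $X\times_M X=(\bar p\times\bar p)^{-1}(\Delta_M)$ is a closed submanifold of $X\times X$; composing with the above biholomorphism, the ``graph of the action'' map $X\times G\lra X\times X$, $(x,g)\lmt(x,x\cdot g)$, is therefore a closed embedding. A closed embedding is injective and proper: for each fixed $x$ the restriction to $\{x\}\times G$ being injective says that $g\lmt x\cdot g$ is injective, i.e.\ the stabiliser $G_x$ is trivial, so the action is free; and properness of this map is precisely properness of the action. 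Having shown that the action is free and proper, the ``if'' direction gives $[X/G]\simeq\underline{X/G}$, whence $\underline M\simeq\underline{X/G}$ and so $M\simeq X/G$ in $\An$ by the Yoneda Lemma~\ref{Yoneda}; in particular $p=\rho$ is the principal $G$-bundle of Example~\ref{proper_free_action}.

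The step I expect to be the main obstacle is the bookkeeping in the ``only if'' direction: one must verify carefully that the abstract equivalence $[X/G]\simeq\underline M$ transports the atlas $p$ to an honest submersion of manifolds $\bar p$, and transports the fibre-product identification $X\times_{[X/G]}X\simeq X\times G$ of \eqref{pullback_diag_for_triv_bdle} compatibly with both projections, so that the graph of the $G$-action really does become the inclusion of a closed submanifold of $X\times X$ (equivalently, that $[X/G]\simeq\underline M$ forces $[X/G]$ to be separated in the sense of Remark~\ref{rep_morph_for_analytic_stack}, its diagonal being the base change of the closed diagonal of $M$). Once this is in place, freeness and properness drop out simultaneously, and the remaining verifications — full faithfulness of $\Theta_S$ in the ``if'' direction and the final identification $M\simeq X/G$ — are routine.
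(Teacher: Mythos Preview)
Your proof is correct. For the ``if'' direction you follow essentially the same construction as the paper --- produce the morphism $\underline{X/G}\lra[X/G]$ from the principal bundle $X\lra X/G$ and check that each $\Theta_S$ is an equivalence --- though you give more detail on full faithfulness than the paper does.

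For the ``only if'' direction you take a genuinely different route. The paper argues in one line: the canonical morphism $X\lra[X/G]\simeq M$ is a principal $G$-bundle in $\An$ (since $X\lra[X/G]$ is a stacky principal $G$-bundle by Examples~\ref{towards_atlas_for_BG} and~\ref{representability_of_the_can_morphism_from_X_to_X_mod_G}, and pulling it back along the equivalence $M\simeq[X/G]$ returns $X$ itself over $M$), from which freeness, properness, and $M\simeq X/G$ follow at once. You instead transport the pullback square~\eqref{pullback_diag_for_triv_bdle} through the equivalence to identify $X\times G\simeq X\times_M X$ inside $X\times X$, then use that the diagonal of $M$ is closed to conclude that the graph-of-the-action map is a closed embedding, reading off freeness and properness from injectivity and properness of that embedding. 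Your approach has the merit of making explicit exactly where separatedness of $M$ enters (cf.\ Remark~\ref{rep_morph_for_analytic_stack}); the paper's approach is shorter and more conceptual but leaves the transport of the principal-bundle property along the equivalence implicit. The bookkeeping you flag as the main obstacle is genuine but routine: equivalences of stacks preserve $2$-fibre products and representable submersions, so the identification $X\times_{[X/G]}X\simeq X\times_M X$ over $X\times X$ goes through without surprises.
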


\begin{proof}
Let us first assume that $[X/G]$ admits a fine moduli space $M$. Then the canonical morphism $X \lra [X/G]\simeq M$ is a principal $G$-bundle, so $G$ acts freely and properly on $X$ and $M\simeq X/G$. Conversely, if $G$ acts freely and properly on $X$, let us consider the principal $G$-bundle $X \lra X/G$. For all $S\in\An$, an object in $[X/G](S)$ is a principal $G$-bundle $P\lra S$ equipped with a $G$-equivariant map $u:P\lra X$. Since $X/G$ is a manifold, the $G$-equivariant morphism $u$ induces a morphism of complex analytic manifolds $\hat{u}:P/G\simeq S\lra X/G\simeq M$. Conversely, such a map $f:S\lra M$ induces, by pullback of the principal $G$-bundle $X\lra X/G\simeq M$, a principal $G$-bundle $f^*X\lra S$, equipped by construction with a $G$-equivariant map $P\lra X$. These two maps set up natural isomorphisms $[X/G](S) \simeq M(S)$, hereby proving that $[X/G]$ is representable by $M$.
\end{proof}

\begin{example}\label{fine_mod_spaces_example}
The action of $\C^*$ on $\C^2\setminus\{(0,0)\} $ given by $t \cdot ( z_1, z_2 )  := ( t \cdot z_1, t \cdot z_2 ) $ is proper and free. So the quotient stack $[ \, (\C^2\setminus\{(0,0)\}) \, /  \, \C^*]$ admits $\CP^1$ as its fine moduli space.
\end{example}

We can reformulate Proposition \ref{fine_mod_space_for_quot_stacks} by saying that, for quotient stacks, a fine moduli space exists if and only if the action is free and admits a geometric quotient.

\subsection{GIT quotients}\label{invariant_theory_section}

To make sense out of Example \ref{CMS_for_quotient_stacks_examples} from a general point of view, we can use the Kempf-Ness formulation of Geometric Invariant Theory (\cite{Kempf,Kempf_Ness}). The work of Kempf and Ness applies to analytic actions of \emph{reductive} complex Lie groups $ G $ (i.e.\ groups that are isomorphic to the complexification of their maximal compact subgroup) and, at least when $ X $ can be embedded equivariantly and linearly onto a closed submanifold of some $ \C^N $ (meaning $G$-equivariantly with respect to a $ G $-action on $ \C^N$ that is induced by a linear representation $ G \lra \GL(N;\C)$), it enables us to say us two things:
\begin{enumerate}
\item If $ O \subset X $ is a $ G $-orbit, then its closure $ \overline{O} $ contains a unique closed orbit. As a consequence, the relation $ O_1 \sim O_2 $ if $ \overline{O_1} \cap \overline{O_2} \neq \emptyset $ is an equivalence relation on the set of $ G $-orbits (the previous result on the closure of an orbit being used to prove that this relation is transitive).
\item The set of closed $ G $-orbits in $ X $, denoted by $ X /\!\!/ G $, which by definition is the quotient of $ X / G $ by the previous equivalence relation, is a categorical quotient for the $ G $-action on $ X $. In particular, the holomorphic functions on $X/\!\!/G$ are the $G$-invariant holomorphic functions on $ X $. This quotient is called the \textit{GIT quotient} for the action of $ G $ on $ X $.
\end{enumerate}
There is in fact a more general situation, in which $ X $ can be embedded $ G$-equivariantly onto a locally closed submanifold of some $ \CP^{N-1}$ with respect to a \emph{linearisable} action of $G$, meaning that $ G $ is required to act on $ \CP^{N-1} $ via a a linear representation $ G \lra \GL(N;\C)$ and the canonical action of $ \GL(N;\C)$ on $\CP^{N-1}$. In that case, one has to restrict the action to the so-called \emph{semistable locus} $ X^{ss} \subset X $, which is a $G$-invariant open subset of $ X $ that can be defined as follows:
$$ X^{ss} = \{ x \in X \subset \CP^{N-1}\ | \ 0 \not\in \overline{G\cdot \tilde{x}}, \text{ where } \tilde{x}\in \C^N \text{ is a lift of } x\}. $$ If $X^{ss}\neq\emptyset$, then one can look at $ G $-orbits that are closed in $ X^{ss} $ and proceed as before to form the categorical quotient $ X^{ss} /\!\!/G $. The result, however, can depend strongly on the linearisation (see Example \ref{proj_space_as_GIT_quot}). In practice, it suffices to test semistability with respect to all $1$-parameter subgroups $\lambda: \C^* \lra G$, which is done via the Hilbert-Mumford criterion:
$$ X^{ss} = \big\{ x \in X \subset \CP^{N-1}\ | \ \forall\, \lambda: \C^* \lra G, \ \exists\, \mu \geq 0, \ \lim_{t \to 0} t^\mu\, \big( \lambda(t)\cdot \tilde{x} \big) \text{ exists in } \C^N \text{ and is} \neq 0 \big\} . $$

\begin{example}\label{proj_space_as_GIT_quot}
Let us equip $\C^2$ with the $\C^*$-action given by $t \cdot (z_1,z_2) = (tz_1,tz_2)$, and identify it with an open subset $ X \subset \CP^2$ via $(z_1,z_2) \lmt [1:z_1,z_2]$, then:
\begin{enumerate}
\item With respect to the linearisation $t\cdot[z_0:z_1:z_2] := [tz_0:tz_1:tz_2]$, it can be checked (via the Hilbert-Mumford criterion) that $X^{ss} = \emptyset$, so there is no categorical quotient in this case.
\item With respect to the so-called trivial linearisation $t\cdot[z_0:z_1:z_2] := [z_0:tz_1:tz_2]$, the semistable locus is $$X^{ss} =  \big\{ [ z_0 : z_1 : z_2] \in \CP^2 \ | \ z_0\neq 0 \big\} = X \simeq \C^2.$$ The only closed orbit is $\{(0,0)\}$ so the categorical quotient is a point in this case.
\item With respect to the linearisation $t\cdot[z_0:z_1:z_2] := [t^{-1}z_0:tz_1:tz_2]$, the semistable locus is 
$$ X^{ss} = \big\{ [ z_0 : z_1 : z_2] \in \CP^2 \ | \ z_0\neq 0 \text{ and } (z_1,z_2) \neq (0,0) \big\} \simeq \C^2 \setminus 
\big\{ (0,0) \big\} \, .$$ All $\C^*$-orbits are then closed in $X^{ss}$, so in this case the categorical quotient is 
$$ X^{ss} /\!\!/ \C^* \simeq  \big(\C^2\setminus\{(0,0)\}\big) \, /  \, \C^* \simeq \CP^1 .$$
\end{enumerate}
\end{example}

For a presentation of geometric invariant theory in a complex analytic setting (using the results of Kempf and Ness), we refer for instance to \cite{Woodward_CIRM}, the point being that, via that theory, linearisable actions of reductive complex Lie groups on quasi-projective analytic manifolds give rise to special categorical quotients called GIT quotients\index{quotient!GIT quotient}. More precisely, we have the following result.

\begin{prop}
Let $ G $ be a reductive complex Lie group acting linearly on a quasi-projective analytic manifold $ X \subset \CP^{N-1} $. If $ X^{ss} \neq \emptyset $, then the GIT quotient
$$ X^{ss} /\!\!/ G := \{ \text{closed } G\text{-orbits in } X^{ss} \}$$ 
is a coarse moduli space for the analytic stack $[X^{ss}/G]$.
\end{prop}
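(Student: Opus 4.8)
The plan is to reduce everything to Corollary \ref{CMS_and_cat_quot}, applied not to the original action of $G$ on $X$ but to the restricted action on the semistable locus. First I would note that $X^{ss}\subset X$ is a $G$-invariant open submanifold, hence itself an object of $\An$ carrying an analytic $G$-action, so that the quotient stack $[X^{ss}/G]$ is well-defined and the statement even makes sense. Then I would invoke the Kempf--Ness formulation of geometric invariant theory recalled above: for a reductive complex Lie group acting linearly on $X\subset\CP^{N-1}$ with $X^{ss}\neq\emptyset$, the set $X^{ss}/\!\!/G$ of closed $G$-orbits in $X^{ss}$ inherits a structure of complex analytic manifold for which the canonical projection $\rho:X^{ss}\lra X^{ss}/\!\!/G$ is a $G$-invariant holomorphic map, and moreover $(X^{ss}/\!\!/G,\rho)$ is a \emph{categorical quotient} for the $G$-action on $X^{ss}$ in the sense of Corollary \ref{CMS_and_cat_quot}, i.e.\ every $G$-invariant morphism $\psi:X^{ss}\lra N$ in $\An$ factors uniquely through $\rho$.

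The remaining step is purely formal. By Corollary \ref{CMS_and_cat_quot}, applied with $X$ replaced by $X^{ss}$, the existence of a categorical quotient for the $G$-action on $X^{ss}$ is equivalent to the existence of a coarse moduli space for $[X^{ss}/G]$, and this coarse moduli space is $(X^{ss}/\!\!/G,\pi)$, where $\pi:[X^{ss}/G]\lra X^{ss}/\!\!/G$ is the morphism of stacks that corresponds, via Proposition \ref{univ_ppty_of_quot_stacks}, to the $G$-invariant map $\rho$. Concretely: given any morphism of stacks $\phi:[X^{ss}/G]\lra N$ with $N\in\An$, Proposition \ref{univ_ppty_of_quot_stacks} identifies it with the $G$-invariant morphism $\psi=\phi\circ p:X^{ss}\lra N$ in $\An$; the universal property of the categorical quotient produces a unique $\overline{\psi}:X^{ss}/\!\!/G\lra N$ in $\An$ with $\overline{\psi}\circ\rho=\psi$; and applying Proposition \ref{univ_ppty_of_quot_stacks} again (now to the target manifold $X^{ss}/\!\!/G$) turns $\overline{\psi}$ into the unique morphism of stacks $\overline{\phi}:X^{ss}/\!\!/G\lra N$ satisfying $\overline{\phi}\circ\pi=\phi$. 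This is exactly the universal property in Definition \ref{CMS}. The extra surjectivity condition sometimes imposed on a coarse moduli space also holds, since every point of $X^{ss}/\!\!/G$ is by definition a closed $G$-orbit, hence lies in the image of $\pi_\bullet:[X^{ss}/G](\bullet)\lra (X^{ss}/\!\!/G)(\bullet)$.

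The main obstacle is not really in this deduction: the genuine mathematical content --- that $X^{ss}/\!\!/G$ is a complex analytic manifold and a categorical quotient --- is imported wholesale from Kempf--Ness theory and is only cited here. Within the framework of the present paper, the single point that deserves care is the translation between the two universal properties at play: that of a coarse moduli space is phrased in terms of all morphisms of stacks out of $[X^{ss}/G]$, whereas that of a categorical quotient is phrased in terms of $G$-invariant morphisms out of $X^{ss}$, and it is Proposition \ref{univ_ppty_of_quot_stacks} that bridges the two. Everything else is bookkeeping.
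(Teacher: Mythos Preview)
Your proposal is correct and follows essentially the same approach as the paper: invoke GIT (Kempf--Ness) to obtain that $X^{ss}/\!\!/G$ is a categorical quotient in $\An$, then apply Corollary \ref{CMS_and_cat_quot} to conclude it is a coarse moduli space for $[X^{ss}/G]$. Your version is more explicit in unpacking the translation via Proposition \ref{univ_ppty_of_quot_stacks} and in checking the optional surjectivity condition, but the logical skeleton is identical.
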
 

\begin{proof}
Geometric invariant theory tells us that, as $ G $ is reductive, the space of closed semistable orbits $X^{ss} /\!\!/G$ carries a structure of complex analytic manifold that makes it a categorical quotient for the action of $ G $ on $ X^{ss}$. By Corollary \ref{CMS_and_cat_quot}, the GIT quotient $X^{ss} /\!\!/ G $ is therefore a coarse moduli space for the analytic stack $[X^{ss}/G]$. 
\end{proof}

\begin{rem}
When a reductive group $ G $ acts on a locally closed analytic submanifold $X \subset \C^N$, we can embed $ X $ equivariantly in $\CP^{N-1}$ with respect to the trivial linearization $ g \cdot [z_0: w] = [z_0 : g\cdot w] $, where $w\in\C^N$. Then $X^{ss} = X$, so in that case the quotient stack $[X/G]$ admits a coarse moduli space.
\end{rem}

\begin{example}\label{roots_of_unity_acting_by_rotation}
Let us retake the example of the orbifold $[\C/\mu_n(\C)]$, where the group $ \mu_n(\C) \simeq \Z/n\Z $ of $n$-th roots of unity acts on $\C$ via $ \zeta \cdot z = \zeta z $ (Examples \ref{covers_of_quotient_stack} and \ref{quotient_by_finite_and_discrete_group}). This action is proper but not free. Since $\mu_n(\C)$ is a finite group, it is reductive. So we get a GIT quotient whose orbits are all closed, and we note that such a GIT quotient is necessarily a geometric quotient in the sense of Definition \ref{geom_quot_def}, because by construction the holomorphic functions on a GIT quotient $X^{ss} /\!\!/ G$ correspond to $ G $-invariant functions on $ X^{ss} $. In the present example, we can identify this geometric quotient with $\C$ as a complex analytic manifold, via the holomorphic map $ j : \C/\mu_n(\C) \lra \C $ induced by the $\mu_n(\C)$-invariant map $z \lmt z^n$ (see Diagram \ref{CMS_roots_of_unity_acting_by_rotation}). The same occurs if we replace $ \C $ by an open disk centred at the origin.
\begin{equation}\label{CMS_roots_of_unity_acting_by_rotation}
\begin{tikzcd}
\C \ar[rd, "z\lmt z^n"] \ar[d, "\rho"'] & \\
\C/\mu_n(\C) \ar[r, "j"'] & \C
\end{tikzcd}
\end{equation}
\end{example}

To sum up, when $ G $ is a reductive complex analytic Lie group acting linearly on a quasi-projective complex analytic manifold $ X $ with non-empty semistable locus $X^{ss}$, GIT quotients $X^{ss} /\!\!/ G$ exist in $\An$ and satisfy the following properties:
$$ 
\text{GIT quotient} \Rightarrow \text{coarse moduli space/categorical quotient for the quotient stack} \left[X^{ss}/G\right] $$
and
$$ \text{GIT quotient + closed orbits} \Rightarrow \text{geometric quotient for the quotient stack} \left[X^{ss}/G\right]. 
$$
By definition, the points of the GIT quotient $X^{ss} /\!\!/ G$ are the closed $ G $-orbits in $X^{ss}$, so saying that all orbits are closed is exactly saying that $X^{ss} /\!\!/ G= X^{ss} / G$ (i.e.\ that the GIT quotient is an orbit space). Finally:
$$ 
\text{GIT quotient + closed orbits + free action} \Rightarrow \text{fine moduli space for the quotient stack } \left[X^{ss}/G\right] .
$$


\section{The moduli stack of elliptic curves}\label{stack_of_ell_curves}

\subsection{Families of elliptic curves}\label{def_of_stack_of_ell_curves}

Given a complex analytic manifold $S\in\An$, we define $\M_{1,1}(S)$ to be the category of \emph{analytic families} of elliptic curves parameterised by $S$, in the following sense.

\begin{definition}\label{family_ell_curves_over_S}\index{family!of elliptic curves}
A \textit{family of elliptic curves} parameterised by a complex analytic manifold $ S \in \An $ is a triple $(E,\pi,e)$ where: 
\begin{enumerate}
\item $E$ is a complex analytic manifold.
\item $\pi:E\lra S$ is a submersion in $\An$.
\item $e:S\lra E$ is a section of $\pi$.
\item For all $s\in S$, the pair $(E_s:=\pi^{-1}(\{s\}),e(s))$ is a complex elliptic curve, i.e.\ a compact connected Riemann surface $E_s$ of genus $1$, equipped with a marked point $e(s)\in E_s$. In particular, the map $\pi$ is surjective.
\end{enumerate} We define a \textit{morphism of families} from $(E_1,\pi_1,e_1)$ to $(E_2,\pi_2,e_2)$ to be an isomorphism $u:E_1\lra E_2$ in $\An$ such that $\pi_2\circ u = \pi_1$ and $u\circ e_1 = e_2$.
\begin{equation*}
\begin{tikzcd}
E_1 \ar[rr, "u"] \ar[dr, "\pi_1"] & & E_2 \ar[dl, "\pi_2"'] \\
& S \ar[ur, dashed, bend right=20, "e_2"'] \ar[lu, dashed, bend left=20, "e_1"] & 
\end{tikzcd}
\end{equation*}
\end{definition}

\begin{example}\label{Legendre_family_bis}
The Legendre family $E = \sqcup_{u\in \C\setminus\{0,1\}} \sC_u $ defined in Example \ref{Legendre_family} is a family of elliptic curves in the sense of Definition \ref{family_ell_curves_over_S}, parameterised by $S:=\C\setminus\{0,1\}$. Indeed, 
$$ E := \left\{ (u, [x:y:z]) \in S \times \mathbb{CP}^{2} \ | \ y^{2}z=x(x-z)(x-u z)\right\}$$ is a complex analytic submanifold of $S \times \mathbb{CP}^{2} $, mapping to $ S =  \C \setminus\{0,1\} $ via the projection $\pi$ to the first factor, which is a submersion. The fibre of $\pi: E \lra S$ above $u\in S$ is the genus $1$ Riemann surface $\sC_u$ constructed in Example \ref{Legendre_family}, and the points $[0:0:1]\in\sC_u$ define a global section $e: s \lmt (s, [0:0:1])$ of $ \pi$.
\end{example}

We shall often drop $\pi$ from the notation and simply write $(E,e)$ for a family of elliptic curves parameterised by $S$. Since we have restricted ourselves to isomorphisms between analytic families, the category $\M_{1,1}(S)$ is a groupoid, for all $ S \in \An $. To check that this defines a prestack $$ \M_{1,1}:\An^{\mathrm{op}} \lra \text{Groupoids} $$ in the sense of Definition \ref{def_prestack}, we must check that, for all morphism $f:T\lra S$ in $\An$, we can define a natural pullback functor $f^{\,*}:\M_{1,1}(S)\lra \M_{1,1}(T)$. To do this, we set, for all $(E,\pi,e)\in \M_{1,1}(S)$:
\begin{equation}\label{pullback_curve}
f^*E := \{(x,t)\in E\times T\ |\ \pi(x) = f(t)\}
\end{equation} Since $\pi:E\lra S$ is a holomorphic submersion, the topological space $f^*E$ admits a structure of complex analytic manifold, turning the canonical projection $f^*E\lra T$ into a submersion. Moreover, for all $t\in T$, the fibre $(f^*E)_t \simeq E_{f(t)}$ has a canonical structure of complex elliptic curve, with marked point $e(f(t))=(f^*e)(t)$. This proves that $(f^*E,f^*e)$ is an analytic family of elliptic curves parametrised by $T$ and that $f^*E$ if the fibre product of $E$ and $T$ over $S$ in $\An$. So $E\lmt f^*E$ is a functor with respect to $E$ and, if $g:U\lra T$ is a morphism in $\An$, we can identify canonically $g^*(f^*E)$ and $(f\circ g)^*E$ in $\M_{1,1}(U)$, which is what we need for Definition \ref{def_prestack}.



\begin{rem}
In the general definition of a family of complex analytic manifolds, the condition that the holomorphic submersion $\pi:E\lra S$ also be proper is often incorporated in the definition. In particular, with that definition, the fibres of $\pi$ are automatically compact.
\end{rem}

\begin{definition}\label{M_1_1_def}\index{moduli stack!of elliptic curves}\index{stack!moduli stack}
The prestack $\M_{1,1}$ defined, for all complex analytic manifold $S\in\An$, by $$\M_{1,1}(S) = \big\{\text{analytic families\ of\ elliptic\ curves\ parameterised\ by}\ S\big\}$$ is called the \emph{moduli stack} of elliptic curves.
\end{definition}

The term \emph{stack} used in Definition \ref{M_1_1_def} is justified by the following result.

\begin{prop}\label{prestack_M11_is_a_stack}
The prestack $\M_{1,1}$ is a stack.
\end{prop}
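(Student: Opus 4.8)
The plan is to verify the two stack axioms of Definition \ref{def_stack} — gluing of morphisms and gluing of objects — for the prestack $\M_{1,1}$, using the fact that the ambient data (complex analytic manifolds, submersions, sections, and the pointwise condition of being an elliptic curve) are all \emph{local on the base} $S$. Fix $S\in\An$ and an open covering $(S_i)_{i\in I}$ of $S$; throughout, for a family $(E,\pi,e)$ over $S$ and an open $U\subset S$, the restriction $E|_U$ is the pullback $E\times_S U$ along the inclusion, which by \eqref{pullback_curve} is just $\pi^{-1}(U)$, with $\pi$ and $e$ restricted accordingly.

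For the gluing of \emph{morphisms}, suppose $(E,\pi,e)$ and $(E',\pi',e')$ are two families over $S$ and we are given, for each $i$, an isomorphism of families $\phi_i:E|_{S_i}\overset{\simeq}{\lra}E'|_{S_i}$ agreeing on overlaps, i.e.\ $\phi_i$ and $\phi_j$ restrict to the same map $\pi^{-1}(S_i\cap S_j)\lra \pi'^{-1}(S_i\cap S_j)$. Since $\{\pi^{-1}(S_i)\}_{i\in I}$ is an open covering of $E$ and biholomorphisms of complex analytic manifolds glue (this is just the sheaf property of the structure sheaf, applied to the components of the map in charts), there is a unique holomorphic map $\phi:E\lra E'$ with $\phi|_{\pi^{-1}(S_i)}=\phi_i$; its inverse is glued the same way from the $\phi_i^{-1}$, so $\phi$ is a biholomorphism. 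The identities $\pi'\circ\phi=\pi$ and $\phi\circ e=e'$ hold because they hold after restricting to each $\pi^{-1}(S_i)$ (resp.\ each $S_i$) and the $S_i$ cover $S$. Uniqueness of $\phi$ is immediate since a map out of $E$ is determined by its restrictions to the members of an open cover. This establishes Condition (1).

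For the gluing of \emph{objects}, suppose we are given families $(E_i,\pi_i,e_i)$ over $S_i$ together with isomorphisms $\phi_{ij}:E_j|_{S_i\cap S_j}\overset{\simeq}{\lra}E_i|_{S_i\cap S_j}$ satisfying the cocycle conditions $\phi_{ii}=\id$ and $\phi_{ij}\circ\phi_{jk}=\phi_{ik}$ on triple overlaps. We form $E$ as the quotient of $\sqcup_{i\in I}E_i$ by the equivalence relation generated by identifying, for $x\in \pi_j^{-1}(S_i\cap S_j)$, the point $x\in E_j$ with $\phi_{ij}(x)\in E_i$; the cocycle condition guarantees this is a genuine equivalence relation and that the resulting quotient map from each $E_i$ is an open embedding onto an open subset $\pi^{-1}(S_i)\subset E$, so $E$ inherits a complex analytic manifold structure (one must check Hausdorffness, which follows from the fact that the $\phi_{ij}$ are defined on the full preimages of the open sets $S_i\cap S_j$, so no ``line with doubled origin'' phenomenon occurs, together with Hausdorffness of $S$ itself). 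The maps $\pi_i$ and $e_i$ are compatible with the identifications — $\pi_j = \pi_i\circ\phi_{ij}$ on overlaps because $\phi_{ij}$ is a morphism of families, and similarly $\phi_{ij}\circ e_j=e_i$ — so they glue to a holomorphic submersion $\pi:E\lra S$ and a holomorphic section $e:S\lra E$. Finally, for $s\in S_i$ the fibre $(E,e)_s$ is canonically isomorphic to $(E_i,e_i)_s$, hence is a genus $1$ compact connected Riemann surface with marked point; thus $(E,\pi,e)\in\M_{1,1}(S)$. The tautological isomorphisms $\phi_i:E|_{S_i}\overset{\simeq}{\lra}E_i$ (inverses of the open embeddings $E_i\hookrightarrow E$) satisfy $\phi_i\circ\phi_j^{-1}=\phi_{ij}$ by construction, which is Condition (2).

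The only genuinely delicate point, and the one I would spell out carefully rather than wave at, is the Hausdorffness of the glued manifold $E$ in Condition (2): gluing along open subsets can, in general, produce non-Hausdorff spaces. Here it is harmless because the gluing is performed over the open subsets $S_i\cap S_j$ of the \emph{base} $S$ — more precisely, two points in different charts $E_i$, $E_j$ that fail to be separated would have to project to the same point of $S$ lying in the closure of $S_i\cap S_j$ inside both $S_i$ and $S_j$, and since the transition $\phi_{ij}$ is defined on \emph{all} of $\pi_j^{-1}(S_i\cap S_j)$, any such pair is in fact identified. (Alternatively, one can observe that properness of $\pi$ — which holds since the fibres are compact and $S$ is locally compact Hausdorff, or is simply built into the definition of a family, cf.\ the preceding remark — forces $E$ to be Hausdorff.) Everything else is the routine observation that the defining conditions of $\M_{1,1}(S)$ — being a complex analytic manifold, being a submersion, being a section, having elliptic-curve fibres — are all local on $S$, so they glue automatically, exactly as for the prestacks $[X/G]$, $BG$, $\underline X$ handled in Proposition \ref{prestack_of_ell_curves_is_a_stack}.
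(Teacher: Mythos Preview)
Your proof is correct and follows essentially the same approach as the paper: verify the two gluing conditions of Definition \ref{def_stack} by defining $\phi$ piecewise from the $\phi_i$ for morphisms, and constructing $E$ as the quotient $\sqcup_i E_i/\!\sim$ for objects. Your treatment is in fact more careful than the paper's own proof, which simply declares the glued space $E$ to be a complex analytic manifold without commenting on Hausdorffness; your discussion of why no non-Hausdorff pathology arises (the transition maps being defined on full preimages of the base overlaps) is a worthwhile addition.
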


\begin{proof}
We have to check that the conditions of Definition \ref{def_stack} are satisfied by the prestack $\M_{1,1}$. This means showing that analytic families of elliptic curves parameterised by $S\in\An$, and morphisms between them, can be constructed by gluing. Explicitly, we want to show that, given an analytic manifold $ S $ and an open covering $ ( S_i )_{ i \in I } $ of $ S $, the following two properties hold:
\begin{enumerate}
\item If $ E, E' \lra S $ are families of elliptic curves parameterised by $ S $ and $ \phi_i : E|_{ S_i } \lra E'|_{ S_i } $ are morphisms between the pullbacks of these two families to $ S_i $, satisfying the compatibility conditions  $ \phi_i|_{ S_i \cap S_j } = \phi_j|_{ S_i \cap S_j } $ over $ S_i \cap S_j $, then there exists a unique morphism $ \phi : E \lra E ' $ in $ \M_{1,1}( S ) $ such that, for all $ i $, $ \phi|_{ S_i } = \phi_i $.
\item If, for all $ i $ in $ I $, we are given a family of elliptic curves $ \pi_i : E_i \lra S_i $, as well as isomorphisms $$\phi_{ij}: E_j|_{S_i\cap S_j} \overset{\simeq}{\lra} E_i|_{S_i\cap S_j}$$ satisfying the cocycle conditions $\phi_{ii} = \id_{E_i}$ and $\phi_{ij}\circ\phi_{jk} = \phi_{ik}$ over $ S_i \cap S_j \cap S_k$, then there exists a family of elliptic curves $ E \lra S $ together with isomorphisms $\phi_i:E|_{S_i}\lra E_i$ over $ S_i $, such that, for all $i,j$ in $ I $, one has $ \phi_i \circ \phi_j^{-1} = \phi_{ij} $ over $ S_i \cap S_j $.
\end{enumerate}
The first property holds because we can simply define $ \phi( x ) := \phi_i( x ) $ if $ x \in E|_{ S_i } $ and this gives a well-defined map that is a morphism of families of elliptic curves from $ E $ to $ E' $, with the property that $ \phi|_{ S_i } = \phi_i $. And the second property holds because the complex analytic manifold 
$$ E := \sqcup_{i \in I} E_i \, \big/ \sim $$ where $( i, x) \sim (j, y)$ if $ x = \phi_{ij}( y ) $ satisfies the required properties by construction: it is equipped with a holomorphic submersion
\begin{equation}\label{hol_submersion_constructed_by_gluing}
\pi : E \lra \left(\sqcup_{i \in I} S_i \, \big/ \sim\right) = S 
\end{equation} whose fibres are elliptic curves because the gluing procedure guarantees that $ E $ comes equipped with isomorphisms between the open subsets $ E|_{S_i} \subset E $ and the family $ E_i $.
\end{proof}

\begin{rem}
Note that if we had asked, in Definition \ref{prestack_M11_is_a_stack}, that the holomorphic submersion $ \pi: E \lra S $ also be proper, Property (2) in the proof of Proposition \ref{prestack_M11_is_a_stack} would be more difficult to show: we would also need to prove that the map \eqref{hol_submersion_constructed_by_gluing}, constructed by gluing, is proper, which is in fact the case when the morphisms $\pi_i: E_i \lra S_i$, from which $ \pi : E \lra S $ is constructed, are also assumed to be proper. 
\end{rem}

We ultimately want to argue that the reason why the stack $ \M_{1,1} $ is called a \emph{moduli} stack is because of the existence of a \emph{universal family}\index{family!universal family} on it. This will require an explanation, but the idea is that there exists, because of the geometric content behind the definition of $ \M_{1,1} $, a family of elliptic curves $ \U_{1,1} \lra \M_{1,1}$ with the following property:
\begin{equation}\label{univ_ppty_of_univ_family}
\begin{matrix}
\text{For all } S \in \An \text{ and all } E \in \M_{1,1}(S) \text{, there exists} \text{ a unique}\\ \text{morphism } f: S \lra \M_{1,1}  \text{ such that } f^* \U_{1,1} \simeq E .
\end{matrix}
\end{equation}
Note that, for Property \eqref{univ_ppty_of_univ_family} to make sense, we must provide a definition of a family of elliptic curves parameterised by a stack instead of a manifold. Once this is done, we can get an intuition of how we can define a universal family.

\begin{definition}\label{family_ell_curves_over_stack}
Let $\N$ be a stack over $\An$. A \textit{family of elliptic curves}\index{family!of elliptic curves} over $\N$ is a morphism of stacks $\chi: \E \lra \N $ such that:
\begin{enumerate}
\item The morphism $\chi: \E \lra \N$ is representable.
\item For all $S \in \An$ and all morphism $f : S \lra \N$, the pullback $$ f^*\E \simeq S \times_\N \E \in \An $$ is a family of elliptic curves over $ S $, in the sense of Definition \ref{family_ell_curves_over_S}.
\end{enumerate}
\end{definition}

In Definition \ref{family_ell_curves_over_stack}, the role of Condition (1) is to ensure that Condition (2) makes sense. Indeed, Condition (1) guarantees that $f^*\E$ is a manifold for all  morphism $f : S \lra \N$ with $S \in \An$.

\medskip

In view of Definition \ref{family_ell_curves_over_stack}, we can extend $\M_{1,1}$ to the category of stacks by defining groupoids 
\begin{equation}\label{families_param_by_a_stack} 
\widehat{\M_{1,1}}(\N) := \{ \E \lra \N : \ \text{family of elliptic curves on} \ \N \}\,.
\end{equation} 
Note that a family of elliptic curves $\E\lra \N$ induces a morphism of stacks $F: \N \lra \M_{1,1}$, defined, for all $ S \in \An $, by taking $(u:S \lra \N)$ to $u^*\E \in \M_{1,1}(S)$. For this to be compatible with the Yoneda embedding $\M_{1,1} \lmt \Mor_{\mathcal{S}t}(-,\M_{1,1})$, it is necessary and sufficient to assume that the the identity morphism $\id_{\M_{1,1}}$ comes, in the same fashion, from a family $\U_{1,1} \lra \M_{1,1}$. If this is the case, then by the same argument as in Example \ref{towards_Yoneda}, the family $\U_{1,1}$ must indeed satisfy the universal property \eqref{univ_ppty_of_univ_family}. So in order to define a universal family of elliptic curves, it suffices to be able to interpret $\id_{\M_{1,1}}$ as a family of elliptic curves over the stack $\M_{1,1}$.

\begin{rem}
The preceding discussion suggests that all stacks can be abstractly thought of as moduli stacks: if $\M$ is a stack on $\An$, we can consider the $2$-functor $\Mor_{\mathcal{S}t}(-,\M) : \mathcal{S}t/\An \lra \text{Groupoids}$ and define an extension of $\M:\An \lra \text{Groupoids}$ to $\widehat{\M}: \mathcal{S}t/\An \lra \text{Groupoids}$ by setting $\widehat{\M}(\N) := \Mor_{\mathcal{S}t}(-,\M)$ for all $\N\in\mathcal{S}t/\An$. The role of the universal family $\N\in\widehat{\M}(\M)$ is then played by the morphism $\id_{\M}$. But ultimately we want to have a more concrete (and geometrically interpretable) description of universal families, which will depend on the precise stack $\M$ being studied.
\end{rem}

When $\M=\M_{1,1}$, we will describe the universal family of elliptic curves in Corollary \ref{description_of_univ_family_of_ell_curves}. For the moment, we content ourselves with two elementary but fundamental examples of universal families.

\begin{example}\label{universal_bundle}
Let $G$ be a complex analytic group and let $BG:= [\bullet/G]$ be the classifying stack of principal $G$-bundles. By Example \ref{towards_Yoneda}, for all manifold $S \in \An$, the groupoid $BG(S)$ is the   category of principal $G$-bundles on $X$, and by Example \ref{quot_stacks_are_analytic}, the stack $BG$ is an analytic stack. The universal family is then the morphism $EG:=\bullet \lra BG = [ \bullet / G]$. Indeed, it is a representable morphism by Example \ref{representability_of_the_can_morphism_from_X_to_X_mod_G} and, by Example \ref{towards_atlas_for_BG}, it satisfies the universal property \eqref{univ_ppty_of_univ_family}. This means that, for all $S\in \An$ and all principal $G$-bundle $P$ on $S$, there exists a unique morphism of stacks $f:S\lra BG$ such that $P\simeq f^* EG$. Because of this universal property, the principal $G$-bundle $EG\lra BG$ is called the \emph{universal bundle}\index{bundle!universal bundle} over the classifying stack $BG$. Note that $EG$ is indeed a principal $G$-bundle\index{bundle!principal bundle on a stack} over the stack $BG$, where this notion is to be taken in a sense similar to the covering stacks of Definition \ref{stacky_cover_def}: a representable morphism $EG\lra BG$ whose pullback to a manifold $S\in\An$ along a morphism of stacks $S \lra BG$ is a principal $G$-bundle in $\An$. Also, comments similar to Proposition \ref{check_opennness_on_atlas} and Remark \ref{rep_morph_for_analytic_stack} apply: since $BG$ is an analytic stack, to check that the representable morphism $EG \lra BG$ is a principal $G$-bundle, it suffices to check it with respect to the atlas $EG \lra BG$, over which the pullback $EG$ is just $EG \times G$, the trivial principal $G$-bundle over $EG=\bullet$, as seen by taking $X=\bullet$ in Diagram \eqref{pullback_diag_for_triv_bdle}.
\end{example}

\begin{rem}\label{rep_of_univ_fmly_does_not_imply_rep}
Note that $BG$ is not a representable stack as soon a $G$ is non-trivial, but that the universal family $EG=\bullet$ is (representable by) a manifold.
\end{rem}

\begin{example}
As a generalisation of Example \ref{universal_bundle}, consider the principal $G$-bundle $X \lra [X/G]$, where $G$ is a complex analytic group acting on a complex analytic manifold $X$. To say that this is the universal family over the stack $[X/G]$ defined in Example \ref{quotient_prestack} is to say that, for all $S\in\An$ and all object $(P,u)\in [X/G](S)$, there exists a unique morphism of stacks $f: S \lra [X/G]$ such that $P \simeq f^*X$ and that the $G$-equivariant morphism $f^*X \lra X$ coincides with $u: P \lra X$. In other words, it means finding an $f: S \lra [X/G]$ such that the diagram 
$$
\begin{tikzcd}
P \ar[r,"u"'] \ar[d] & X\ar[d] \\
S \ar[r, "f"'] & \left[X/G\right]
\end{tikzcd}
$$ is a pullback diagram. This is possible because the principal $G$-bundle $P$ is locally trivial over $S$ and we have already noted that, if $P = X\times G$ and $u: X\times G \lra X$ is the action map, then  Diagram \eqref{pullback_diag_for_triv_bdle} is a pullback diagram.
\end{example}

\subsection{Families of framed elliptic curves}

We now wish to study the representability of the moduli stack $\M_{1,1}$. As we shall see, this stack can be seen not to be representable for rather general and abstract reasons, which have to do with the fact that the objects that it parameterises, namely elliptic curves, have non-trivial automorphisms (Corollary \ref{autom_of_elliptic_curves_and_genus_one_Riemann_surfaces_again}) and this prevents the moduli stack of elliptic curves from being representable (Proposition \ref{not_rep_if_non_triv_autom}). Since one can get rid of automorphisms by adding a framing to elliptic curves (Theorem \ref{Teich_space_of_ell_curves}), a natural route to study $\M_{1,1}$ is to introduce a \emph{moduli stack of framed elliptic curves}\index{moduli stack!of framed elliptic curves} $\M_{1,1}^{\,\mathrm{fr}}$ and relate it to $\M_{1,1}$. We will ultimately show that $\M_{1,1}^{\,\mathrm{fr}}$, in contrast to $\M_{1,1}$, is representable (by the Teichmüller space $\fh$) and this will imply that the moduli stack $\M_{1,1}$ is isomorphic to the quotient stack $[\fh/\SL(2;\Z)]$. This last step is what shows that $\M_{1,1}$ is an orbifold in the sense of Definition \ref{orbifold_def} (in particular, an analytic stack with atlas $\fh$) and that it admits the modular curve $\fh/\SL(2;\Z)$ as a coarse moduli space. But before we can get to all this, let us make a couple of remarks. 

\begin{rem}
Let us assume that there exists a family of elliptic curves $\U_{1,1} \lra \M_{1,1}$ (in the sense of Definition \ref{family_ell_curves_over_stack}) such that the associated morphism $\M_{1,1} \lra \M_{1,1}$ (defined, for all $S\in\An$, by sending a morphism of stacks $f: S \lra \M_{1,1}$ to the family of elliptic curves $f^*\U_{1,1} \in \M_{1,1}(S)$) is the identity morphism. Then, as we have noted, the stack $\U_{1,1}$ is a universal family in the sense that it satisfies the universal property \eqref{univ_ppty_of_univ_family}. As suggested by Example \ref{rep_of_univ_fmly_does_not_imply_rep}, the representability of the universal family need not imply the representability of the moduli stack. In the reverse direction, however, the representability of $\M_{1,1}$ (by a manifold $M_{1,1}$, say) would imply that the universal family $\U_{1,1}$ is representable (by the fibre product $M_{1,1}\times_{\M_{1,1}} \U_{1,1}$, which is indeed a manifold because the morphism of stacks $\U_{1,1} \lra \M_{1,1}$ is by assumption representable). 
\end{rem}

\begin{rem}
More generally, if we can prove that $\M_{1,1}$ is an analytic stack, then the universal family $\U_{1,1}$ (which we have not yet constructed), is automatically an analytic stack. Indeed, since $\U_{1,1}\lra\M_{1,1}$ is representable, the fibre product of $\U_{1,1}$ with an atlas $X \lra \M$ gives a manifold $X\times_{\M_{1,1}} \U_{1,1}$, which is an atlas for $\U_{1,1}$ because it is straightforward to check that all morphisms in the pullback diagram
$$
\begin{tikzcd}
X\times_{\M_{1,1}} \U_{1,1} \ar[r] \ar[d] & \U_{1,1}\ar[d] \\
X \ar[r] & \M_{1,1}
\end{tikzcd}
$$
are representable surjective submersions. 
\end{rem}

Independently of these remarks, we can prove the following result.

\begin{prop}\label{not_rep_if_non_triv_autom}
The moduli stack of elliptic curves $\M_{1,1}$ is not representable by a manifold.
\end{prop}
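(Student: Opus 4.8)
The plan is to exploit the fact, recorded in the Remark after Lemma~\ref{gp_structure} and in Corollary~\ref{autom_of_elliptic_curves_and_genus_one_Riemann_surfaces_again}, that every elliptic curve $(\sC,e)$ has a non-trivial automorphism (the inversion $x\lmt -x$ coming from $-I_2\in\SL(2;\Z)$), and to show that a representable stack cannot parameterise objects with non-trivial automorphisms. First I would argue by contradiction: suppose $\M_{1,1}\simeq\underline{M}$ for some $M\in\An$. Recall from Example~\ref{manifolds_as_prestacks} that in the groupoid $\underline{M}(S)$ the only morphisms are identities, so every object of $\underline{M}(S)$ has trivial automorphism group, for every $S$. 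Hence the same must be true in $\M_{1,1}(S)$ for every $S\in\An$.

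The key step is then to produce an $S$ and a family $E\in\M_{1,1}(S)$ whose automorphism group (as an object of the groupoid $\M_{1,1}(S)$, i.e.\ automorphisms of $E$ as a family over $S$ in the sense of Definition~\ref{family_ell_curves_over_S}) is non-trivial. The cleanest choice is $S=\bullet$, a single point: then a family over $S$ is just an elliptic curve $(\sC,e)$, a morphism of families is an isomorphism of elliptic curves, and so $\Aut_{\M_{1,1}(\bullet)}(\sC,e)=\Aut(\sC,e)$, which by Corollary~\ref{autom_of_elliptic_curves_and_genus_one_Riemann_surfaces_again} is $\Z/2\Z$, $\Z/4\Z$ or $\Z/6\Z$, in all cases non-trivial. (Concretely: take the elliptic curve $(\C/\Lambda(i),0)$, or the smooth cubic $y^2z=x^3-xz^2$, with its order-$2$ automorphism $[x:y:z]\lmt[x:-y:z]$.) Since an equivalence of groupoids $\M_{1,1}(\bullet)\simeq\underline{M}(\bullet)$ preserves automorphism groups of objects, and every object of $\underline{M}(\bullet)$ has trivial automorphism group while $(\C/\Lambda(i),0)\in\M_{1,1}(\bullet)$ does not, we reach a contradiction. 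Therefore $\M_{1,1}$ is not representable.

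I do not expect any serious obstacle here; the only point requiring a little care is the bookkeeping that an isomorphism of prestacks $\M_{1,1}\simeq\underline{M}$ restricts, for each $S$, to an \emph{equivalence of groupoids} $\M_{1,1}(S)\simeq\underline{M}(S)$ (immediate from the definition of a morphism of prestacks as a natural transformation of $2$-functors whose component functors are equivalences), together with the standard fact that equivalences of groupoids induce isomorphisms on automorphism groups of objects. One could alternatively phrase the whole argument without contradiction: any morphism $F:\M_{1,1}\to\underline{M}$ of stacks kills all automorphisms, so $F$ cannot be an equivalence because $\M_{1,1}(\bullet)$ has an object with non-trivial automorphisms; but the contradiction form is the most transparent. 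It is worth remarking, as the paper will surely note, that this obstruction is exactly why one must pass to the stack quotient $[\fh/\SL(2;\Z)]$ rather than the coarse space $\fh/\SL(2;\Z)\simeq\C$.
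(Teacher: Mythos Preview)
Your argument is correct and is in fact the standard ``automorphism obstruction'' proof: an equivalence of stacks $\M_{1,1}\simeq\underline{M}$ would give, for each $S$, an equivalence of groupoids, and since objects of $\underline{M}(S)$ have trivial automorphism groups while an elliptic curve over a point does not, this is impossible. The bookkeeping you flag is routine.

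The paper, however, takes a genuinely different route. Rather than comparing automorphism groups at a point, it exploits a sheaf-theoretic property of representable functors: if $\M_{1,1}\simeq\Mor_{\An}(-,M_{1,1})$, then for any finite Galois cover $\pi:X\to Y=X/G$ the pullback $\pi^*:\M_{1,1}(Y)\to\M_{1,1}(X)$ must be injective. It then builds an explicit counterexample: with $\sC=\C/(\Z\oplus\Z i)$, $X=\sC$, and $G=\Z/2\Z$ acting by $[x]\mapsto[x+\tfrac12]$, the quotient of $X\times\sC$ by $([x],[z])\mapsto([x+\tfrac12],[-z])$ is a non-trivial family $E_1$ over $Y$ whose pullback to $X$ is trivial, hence $\pi^*E_1\simeq\pi^*(Y\times\sC)$ while $E_1\not\simeq Y\times\sC$ (the non-triviality of $E_1$ is checked via a holomorphic volume-form argument). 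The non-trivial automorphism of $\sC$ is still the engine, but it is used to twist a family rather than to contradict triviality of $\Aut$ directly.

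Your proof is shorter and more conceptual. The paper's proof buys something else: it exhibits the \emph{isotrivial family} construction, which is the prototypical geometric mechanism behind non-representability and generalises immediately to other moduli problems (as the paper's subsequent remark emphasises). It also makes concrete the failure of $\M_{1,1}$ to be a sheaf in the \'etale topology, not merely the failure of its fibre groupoids to be discrete.
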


\begin{proof}
Let us assume that the stack $\M_{1,1}$ is representable by a manifold $M_{1,1}\in\An$, i.e.\ that $\M_{1,1} \simeq \Mor_{\An}(-,M_{1,1})$. The contradiction will come from a special property of representable functors such as $\Mor_{\An}(-,M_{1,1})$, namely that if $f: X \lra Y$ is a covering map in $\An$, then the induced map 
\begin{equation}\label{pullback_by_cover}
f^*: \Mor_{\An}(Y,M_{1,1}) \lra \Mor_{\An}(X,M_{1,1})
\end{equation} is injective. In fact, it suffices to know that this property holds for Galois covers $ Y = X / G$ where $G$ is a finite group acting by automorphisms on $Y$. In this case, the image of the pullback map \eqref{pullback_by_cover} is the set $\mathrm{Fix}_G\, \Mor_{\An}(X,M_{1,1})$ of $G$-invariant complex analytic maps from $X$ to $M_{1,1}$ and since such a map induces a well-defined complex analytic map from $Y$ to $M_{1,1}$, the pullback map $f^*$ is injective. 

\smallskip

Thus, to prove that $\M_{1,1}$ is not representable, it suffices to find a Galois covering map $\pi: X \lra X/G$ and two non-isomorphic families of elliptic curves $E_1, E_2$ on $X/G$ such that $\pi^*E_1 \simeq \pi^* E_2$ as families of elliptic curves over $X$. Indeed, this will contradict the injectivity of $\pi^*: \M_{1,1}(Y) \lra \M_{1,1}(X)$ which has to hold if we assume that $\M_{1,1} \simeq \Mor_{\An}(-,M_{1,1})$, i.e.\ that $\M_{1,1}$ is representable. And this is when the fact that there are elliptic curves with non-trivial automorphisms comes in handy (to construct the families $E_1, E_2$).

\smallskip

Let $\sC$ be the elliptic curve $\sC := \C / (\Z1 \oplus \Z i)$ and set $X := \sC$. We consider the action of the group $G := \Z/2\Z$ induced on $E := X\times \sC$ by the involution 
\begin{equation}\label{concrete_invol}
\big([x], [z]\big) \lmt \big( [x+\textstyle\frac{1}{2}], [-z] \big)
\end{equation} and the quotient $E_1 := E / G$. Since $G$ acts freely on $X$, the quotient $Y:= X/G$ is a Riemann surface and the canonical projection $\pi:X \lra Y$ is a two-to-one cover. Note that $Y$ has genus $1$ but does not carry a preferred marked point because $G$ does not act on $X$ by automorphisms of elliptic curves. Moreover, there is an induced complex analytic map $E_1 \lra Y$, which is locally trivial with fibres isomorphic to $\sC$. Since the action of $G$ on the fibres of $E$ is an action \textit{by automorphisms of elliptic curves}, the map $E_1 \lra Y$ is in fact a family of elliptic curves. The point is that $E_1$ is not trivial but pulls back to the trivial family $E = X \times \sC$. Since the product family like $E_2 := Y \times \sC$ also pulls back to $E$, we see that the map $\pi^*: \M_{1,1}(Y) \lra \M_{1,1}(X)$ is not injective. 

\smallskip

Note that, to see that $E_1$ is not trivial, it suffices to show that it does not carry a non-vanishing holomorphic volume form (which a product of two Riemann surfaces of genus $1$ would), because otherwise the pullback of such a volume form to $E = X\times \sC$ would be $G$-invariant and equal to $c dx \wedge dz $ for some constant $c\neq 0$, contradicting the fact that $dx \wedge dz $ is $G$-anti-invariant with respect to the involution \eqref{concrete_invol}.
\end{proof}

\begin{rem}
The technique used in the proof of Proposition \ref{not_rep_if_non_triv_autom} can be adapted to show the non-representability of various moduli stacks. It rests on the fact that a representable stack $\M \simeq \Mor_{\An}(-,M)$ is a sheaf in various Grothendieck topologies on $\An$, in particular the étale topology. So, to prove that a stack $\M$ is not representable, it suffices to find an étale covering $\pi: Y\lra X$ such that $\pi^*:\M(X) \lra \M(Y)$ is not  injective. Indeed, this injectivity is part of the conditions of being a sheaf: the restriction map (or pullback) to an open covering (or more generally an étale cover) is an injective map (if two sections of a sheaf coincide on every open of the cover, they are equal). Then, in order to show a lack of injectivity, the usual technique (as in the proof of Proposition \ref{not_rep_if_non_triv_autom}) consists in finding a non-trivial family whose pullback to a finite cover is trivial. The construction of such a family (called an \textit{isotrivial family}\index{family!isotrivial family}) is then typically obtained by taking the quotient of a trivial family by a finite group acting \textit{by automorphisms on the fibre} and freely on the base.
\end{rem}

We now want to introduce families of \emph{framed} elliptic curves. Recall that, in Definition \ref{framed_ell_curve}, we defined a framed elliptic curve to be a triple $(\sC,e,(\alpha,\beta))$ consisting of an elliptic curve $(\sC,e)$ equipped with a direct basis $(\alpha,\beta)$ of $H_1(\sC;\Z)$, i.e.\ a basis satisfying the positivity condition \eqref{positivity_cond}, and this was convenient in order to classify framed elliptic curves and prove that they have no non-trivial automorphisms (Theorem \ref{Teich_space_of_ell_curves}). While we can generalise this point of view to families of elliptic curves, it is not well-adapted to the more general moduli theory of higher genus curves (see also Remark \ref{a_rem_about_higher_genus}). So, before we proceed, we give an equivalent of a framed elliptic curve, based on the following result.

\begin{lemma}\label{framing_alt_def}
Let $(\sC_0,e_0,(\alpha_0,\beta_0))$ and $(\sC_1,e_1,(\alpha_1,\beta_1))$ be framed elliptic curves. Then there exists an orientation-preserving homeomorphism $f:(\sC_0,e_0) \lra (\sC_1,e_1)$ such that $f_*(\alpha_0) = \alpha_1$ and $f_*(\beta_0) = \beta_1$.
\end{lemma}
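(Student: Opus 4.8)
The plan is to reduce everything to the standard complex tori $\C/\Lambda(\tau)$ via Theorem \ref{Teich_space_of_ell_curves}, and then to exhibit the desired homeomorphism as the map induced on the quotient by an orientation-preserving $\R$-linear automorphism of $\C$.

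First I would apply Theorem \ref{Teich_space_of_ell_curves} to each of the two framed elliptic curves: there exist $\tau_0,\tau_1\in\fh$ and isomorphisms of framed elliptic curves
$$
(\sC_k,e_k,(\alpha_k,\beta_k)) \;\simeq\; (\C/\Lambda(\tau_k),0,(\hat 1,\hat\tau_k)), \qquad k=0,1,
$$
where $\hat 1$ and $\hat\tau_k$ denote the $1$-cycles associated to the segments $t\mapsto t$ and $t\mapsto t\tau_k$ in the sense of Example \ref{framed_lattices}. Such an isomorphism is in particular a biholomorphism of marked Riemann surfaces, hence an orientation-preserving homeomorphism carrying the framing $(\alpha_k,\beta_k)$ to $(\hat 1,\hat\tau_k)$ in homology. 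So it suffices to produce an orientation-preserving homeomorphism $\bar h\colon (\C/\Lambda(\tau_0),0)\lra (\C/\Lambda(\tau_1),0)$ with $\bar h_*(\hat 1)=\hat 1$ and $\bar h_*(\hat\tau_0)=\hat\tau_1$, and then to pre- and post-compose with the two isomorphisms above (and the inverse of one of them) to obtain the required $f$.

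To construct $\bar h$, I would observe that, since $\mathrm{Im}(\tau_0)>0$, the pair $(1,\tau_0)$ is an $\R$-basis of $\C$, so there is a unique $\R$-linear map $h\colon\C\lra\C$ with $h(1)=1$ and $h(\tau_0)=\tau_1$. Both $(1,\tau_0)$ and $(1,\tau_1)$ are positively oriented bases of $\C$ for its canonical orientation — this is precisely the content of the conditions $\mathrm{Im}(\tau_0)>0$ and $\mathrm{Im}(\tau_1)>0$ — so $h$ has positive determinant and is therefore an orientation-preserving $\R$-linear isomorphism. By construction $h$ maps $\Lambda(\tau_0)=\Z1\oplus\Z\tau_0$ bijectively onto $\Lambda(\tau_1)=\Z1\oplus\Z\tau_1$, hence it descends to an orientation-preserving homeomorphism $\bar h\colon\C/\Lambda(\tau_0)\lra\C/\Lambda(\tau_1)$ fixing the class of $0$; and since $h$ sends the segment $t\mapsto t\cdot 1$ to itself and the segment $t\mapsto t\cdot\tau_0$ to $t\mapsto t\cdot\tau_1$, we get $\bar h_*(\hat 1)=\hat 1$ and $\bar h_*(\hat\tau_0)=\hat\tau_1$, as wanted.

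There is no serious obstacle here; the only point that requires care is the bookkeeping of orientations — translating the positivity condition \eqref{positivity_cond} defining a framing (equivalently, the condition $\mathrm{Im}(\mu/\lambda)>0$ on a lattice basis $(\lambda,\mu)$, cf.\ Example \ref{framed_lattices}) into the statement that the corresponding real basis of $\C$ is positively oriented, and keeping track of the fact that biholomorphisms of Riemann surfaces, as well as quotient maps induced by orientation-preserving linear isomorphisms, are orientation-preserving, so that the orientations match at every stage of the composition defining $f$.
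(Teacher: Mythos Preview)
Your proof is correct and follows essentially the same route as the paper: reduce via Theorem \ref{Teich_space_of_ell_curves} to the standard framed tori $(\C/\Lambda(\tau_k),0,(1,\tau_k))$, then take the unique $\R$-linear isomorphism of $\C$ sending $(1,\tau_0)$ to $(1,\tau_1)$ and pass to the quotient. The paper even writes down the explicit matrix of this map (your $h$), but the argument is otherwise identical.
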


Note that the condition of being an orientation-preserving homeomorphism from $\sC_0$ to $\sC_1$ makes sense because complex analytic elliptic curves, like all Riemann surfaces, are canonically \emph{oriented} topological surfaces. In the compact connected case, this means that a group isomorphism $H_2(\sC_i;\Z)\simeq \Z$ has been chosen, and for a homeomorphism $f:\sC_0 \lra \sC_1$ to be orientation-preserving means that the induced group isomorphism $f_*: H_2(\sC_0;\Z) \lra H_2(\sC_1;\Z)$ commutes to these identifications. Equivalently, the orientation of $\sC_i$ gives a notion of direct basis on $H_1(\sC_i;\Z)$, and the induced group isomorphism $f_*: H_1(\sC_0;\Z) \lra H_1(\sC_1;\Z)$ sends a direct basis to a direct basis.

\begin{proof}[Proof of Lemma \ref{framing_alt_def}]
By Theorem \ref{Teich_space_of_ell_curves}, we can assume that $(\sC_0,e_0,(\alpha_0,\beta_0))$ and $(\sC_1,e_1,(\alpha_1,\beta_1))$ are framed elliptic curves of the form $(\C/\Lambda(\tau_i),0,(1,\tau))$ for some $\tau_0,\tau_1\in\C$ such that $\mathrm{Im}(\tau_i)>0$. Then let us consider the map $\tilde{f}: \C \lra \C$ defined by 
$$
z = x+iy  \lmt  \frac{(\tau_1-\overline{\tau_0}) z - (\tau_1-\tau_0) \overline{z}}{ \tau_0 - \overline{\tau_0}} = x + \frac{\mathrm{Re}(\tau_1 - \tau_0)}{\mathrm{Im}(\tau_0)} y + i \frac{\mathrm{Im}(\tau_1)}{\mathrm{Im}(\tau_0)} y 
$$ which is the orientation-preserving invertible $\R$-linear map from $\C\simeq \R^2$ to itself defined by the matrix 
\begin{equation}\label{deformation_of_lattices}
\begin{pmatrix}
1 & \mathrm{Re}(\tau_1 - \tau_0) / \mathrm{Im}(\tau_0) \\
0 & \mathrm{Im}(\tau_1) / \mathrm{Im}(\tau_0)
\end{pmatrix}
\end{equation}
and is the unique $\R$-linear map from $\R^2$ to itself sending the basis $(1,\tau_0)$ to the basis $(1,\tau_1)$. By construction, we have $\tilde{f}(m+n\tau_0) = m + n \tau_1$, so $\tilde{f}$ induces an orientation-preserving homeomorphism $f: \C/\Lambda(\tau_0) \lra  \C/\Lambda(\tau_1)$, sending $0$ to $0$ while also taking the framing $(1,\tau_0)$ to the framing $(1,\tau_1)$.
\end{proof}

Note that, by the Cauchy-Riemann equations, the map $\tilde{f}:\C\lra \C$ is holomorphic if and only if the matrix \eqref{deformation_of_lattices} is a similitude matrix, which happens if and only if $\tau_0=\tau_1$. This is consistent with the fact that framed elliptic curves have no non-trivial holomorphic automorphisms. More importantly for us, Lemma \ref{framing_alt_def} implies that there is a bijection between framings of an elliptic curve $(\sC_0,e_0)$ and homotopy classes of orientation-preserving self-homeomorphisms $f\in \mathrm{Homeo}^+(\sC_0,e_0)$, where the latter notation means homeomorphisms $f:\sC_0 \lra \sC_0$ that preserve the canonical orientation of $\sC_0$ and that satisfy $f(e_0) = e_0$. Indeed, the action of the \emph{modular group}\index{modular!modular group}
\begin{equation}\label{modular_group_bis}
\mathrm{Mod}(\sC_0,e_0) := \mathrm{Homeo}^+(\sC_0,e_0) \, / \, \text{homotopy}
\end{equation} on the set of direct bases of $H_1(\sC;\Z)\simeq\Z^2$ is transitive by Lemma \ref{framing_alt_def} and free because, as $\sC$ has genus $1$, its universal cover is $\C$ so an orientation-preserving self-homeomorphism of $(\sC,e)$ lifts to a $\pi_1(\sC,e)$-equivariant orientation-preserving self-homeomorphism of $(\C,0)$ and, as $\C$ is contractible, two such transformations are $\pi_1(\sC,e)$-equivariantly homotopic precisely when they induce the same group automorphism of $H_1(\sC;\Z)$, which can be seen as a lattice in $\C$. Thus, in the situation of Lemma \ref{framing_alt_def}, two orientation-preserving homeomorphisms $f,g:(\sC_0,e_0) \lra (\sC_1,e_1)$ that both send $(\alpha_0,\beta_0)$ to $(\alpha_1,\beta_1)$ will satisfy $g^{-1}\circ f \sim \id_{\sC_0}$, so $g$ will indeed be homotopic to $f$ in $\mathrm{Homeo}^+(\sC_0,e_0)$. Note that the fact that the modular group of a complex analytic elliptic curve $(\sC_0,e_0)$ acts freely and transitively on the set of direct bases of $H_1(\sC_0;\Z)\simeq\Z^2$ also proves that 
\begin{equation}\label{modular_group_of_a_torus_with_marked_point}
\mathrm{Mod}(\sC_0,e_0) \simeq \SL(2;\Z)
\end{equation}
so Definition \ref{modular_group_bis} is consistent with the terminology introduced in Section \ref{modular_curve_section}. We can now redefine framed elliptic curves as follows (compare Definition \ref{framed_ell_curve}).

\begin{definition}\label{framed_ell_curve_alt_def}\index{elliptic curve!framed elliptic curve}\index{framing}
Let us fix a complex analytic elliptic curve $(\sC_0,e_0)$. Then, given a complex elliptic curve $(\sC,e)$, we define a \textit{framing} $[f]$ of $(\sC,e)$ to be a homotopy class of orientation-preserving homeomorphism, i.e.\
$$
[f] \in \mathrm{Homeo}^+\big((\sC_0,e_0),(\sC,e)\big) \, / \, \text{homotopy}.
$$
The set of all framings of $(\sC,e)$ will be denoted by $\mathrm{Fr}_{(\sC_0,e_0)}(\sC,e)$. It is acted upon freely and transitively by the modular group $\mathrm{Mod}(\sC_0,e_0)$, via the right action defined, for all 
$[g]\in \mathrm{Mod}(\sC_0,e_0)$ by 
\begin{equation}\label{mod_gp_action}
[f]\cdot[g] := [f\circ g].
\end{equation}
\end{definition}

Thanks to Definition \ref{framed_ell_curve_alt_def}, we will define a framing of a family of elliptic curves $(E,\pi:E\lra S,e)$ in the sense of Definition \ref{family_ell_curves_over_S} essentially as a framing of the elliptic curve $(E_s,e(s))$ for all $s\in S$, i.e.\ as the data, for all $s\in S$, of a framing $\sigma(s) \in \mathrm{Fr}_{(\sC_0,e_0)}(E_s,e(s))$, where $(\sC_0,e_0)$ is a fixed elliptic curve. In order for $\sigma(s)$ to depend continuously on $s$, it suffices to construct a fibre bundle $\mathrm{Fr}\,(E,e) \lra S$ whose fibre over $s\in S$ is $\mathrm{Fr}_{(\sC_0,e_0)}(E_s,e(s))$ and to define $\sigma$ as a continuous section of that fibre bundle. We will follow Grothendieck's construction in \cite{Groth_constr_axiom_Teich}.

\medskip

So let us start with a family of elliptic curves $(E,\pi:E\lra S,e)$. By Definition \ref{family_ell_curves_over_S}, this means in particular that $\pi:E\lra S$ is a surjective holomorphic submersion with compact connected fibres over the manifold $S$. By the so-called monotone-light factorisation (see for instance \cite[p.102]{Duda_Whyburn}), such a map is necessarily proper. But then, the fact that $\pi$ is a submersion implies that, for all $z\in E$, there exists an open neighbourhood $U$ of $x:=\pi(z)$ in $S$, an open neighbourhood $V\subset\pi^{-1}(U)$ of $z$ in $E$, an open subset $\Omega\subset\pi^{-1}(\{x\})$ and a biholomorphic map $V \simeq U \times \Omega$ over $U$ (\cite[Proposition 6.2.3 p.240]{Hubbard}). Such a map is called \emph{simple} in \cite{Groth_constr_axiom_Teich}, where the base of a family $E\lra S$ is a general complex analytic space, not necessarily a manifold. Note that, unless $V=\pi^{-1}(U)$ and $\Omega= \pi^{-1}(\{x\})$, this is strictly weaker than $\pi$ being a locally trivial holomorphic vector bundle. However, a simple holomorphic map $\pi:E\lra S$ is a locally trivial \emph{topological} fibre bundle. When $S$ is a manifold, this is Ehresmann's theorem, since a simple map is submersive, and for the case when $S$ is an analytic space, which we will not require, we refer to \cite[Lemma 2.1, p.3]{Groth_constr_axiom_Teich}. The point here is that a family of elliptic curves is in particular a locally trivial continuous map $\pi:E \lra S$, whose fibres are oriented compact connected topological surfaces of genus $1$ equipped with a marked point, meaning that for all $ s \in S$, the set $\mathrm{Homeo}^+((\sC_0,e_0),(E_s,e(s)))$ is non-empty. In particular, we may view $\pi:E\lra S$ as a topogical fibre bundle equipped with a continous section $e:S\lra E$, with typical fibre $(\sC_0,e_0)$ and structure group $\mathrm{Homeo}^+(\sC_0,e_0)$. Then, by taking homotopy classes of the transition functions of $E$, we can associate to it the locally trivial topological fibre bundle with discrete fibres $p:\mathrm{Fr}\,(E,e) \lra S$ defined by
\begin{equation}\label{frame_bundle}
\mathrm{Fr}\,(E,e) :=  \bigsqcup_{s\in S} \mathrm{Fr}_{(\sC_0,e_0)}\big(E_s,e(s)\big)
\end{equation}
where 
$$
\mathrm{Fr}_{(\sC_0,e_0)}(E_s,e(s)) = \mathrm{Homeo}^+\big((\sC_0,e_0),(E_s,e(s))\big) \, / \, \text{homotopy} 
$$ so $\mathrm{Fr}\,(E,e)\lra S$ is actually a principal covering space of $S$, with structure group the discrete group $\mathrm{Mod}(\sC_0,e_0)$ introduced in \eqref{modular_group_bis}. We will call $\mathrm{Fr}\,(E,e)$ the \emph{frame bundle}\index{frame bundle} of the family $(E,\pi,e)$. Its construction only depends on the homeomorphism class of the fixed complex elliptic curve $(\sC_0,e_0)$, which is that of a torus $S^1\times S^1$, equipped with a marked point. This construction is functorial, in the sense that a morphism of families of elliptic curves $u:(E_1,e_1)\lra (E_2,e_2)$ (Definition \ref{family_ell_curves_over_S}) gives rise to a morphism of principal bundles
$$\mathrm{Fr}\,(u): \mathrm{Fr}\,(E_1,e_1) \lra \mathrm{Fr}\,(E_2,e_2)$$
induced fibrewise (for all $s\in S$) by the map
$$ 
\begin{array}{rcl}
\mathrm{Homeo}^+\Big( \big(\sC_0,e_0\big), \big(E_1|_{\{s\}},e_1(s)\big) \Big) & \lra & \mathrm{Homeo}^+\Big( \big(\sC_0,e_0\big), \big(E_2|_{\{s\}},e_2(s)\big) \Big) \\
f(s) & \lmt & u_s \circ f(s)
\end{array}
$$ where $u_s : E_1|_{\{s\}} \lra  E_2|_{\{s\}} $ is the isomorphism of elliptic curves induced by the isomorphism $u:(E_1,e_1) \lra (E_2,e_2)$ (recall that, by Definition \ref{family_ell_curves_over_S}, all morphisms of families are isomorphisms).

\begin{definition}\label{framed_family}\index{family!of framed elliptic curves}
Let us fix a complex analytic elliptic curve $(\sC_0,e_0)$. A \emph{family of framed elliptic curves} is a quadruple $(E,\pi,e,\sigma)$ where $(E,\pi:E\lra S,\sigma)$ is a family of elliptic curves parameterised by $S\in\An$ in the sense of Definition \ref{family_ell_curves_over_S} and $\sigma:S \lra \mathrm{Fr}\,(E,e)$ is a continuous section of the frame bundle $\mathrm{Fr}\,(E,e)$ of the family $(E,\pi,e)$, meaning that, for all $s\in S$, we can think of $\sigma(s)$ as (the homotopy class of) a homeomorphism $(\sC_0,e_0) \lra (E_s,e(s))$. A morphism between families of framed elliptic curves $$(E_1,\pi_1,e_1,\sigma_1) \lra (E_2,\pi_2,e_2,\sigma_2) $$ is a morphism of families of elliptic curves $u:(E_1,e_1)\lra (E_2,e_2)$ such that the induced morphism of principal covering spaces
$$\mathrm{Fr}\,(u): \mathrm{Fr}\,(E_1,e_1) \lra \mathrm{Fr}\,(E_2,e_2)$$
satisfies $\mathrm{Fr}\,(u) \circ \sigma_1 = \sigma_2$.
\begin{equation*}
\begin{tikzcd}
\mathrm{Fr}\,(E_1,e_1) \ar[rr, "\mathrm{Fr}\,(u)"] \ar[dr] & &  \mathrm{Fr}\,(E_2,e_2) \ar[dl] \\
& S \ar[ur, dashed, bend right=20, "\sigma_2"'] \ar[lu, dashed, bend left=20, "\sigma_1"] & 
\end{tikzcd}
\end{equation*}
\end{definition}

In particular, all morphisms of framed elliptic curves are isomorphisms, by definition. We shall often drop $\pi$ from the notation and simply write $(E,e,\sigma)$ for a family of framed elliptic curves parameterised by $S$.

\begin{rem}
Note that passing to homotopy classes in the definition of $\mathrm{Fr}\,(E,e)$ gives us a locally trivial fibration, with the topological group $\mathrm{Homeo}^+(\sC_0,e_0)$ replaced by the \emph{discrete} group $\mathrm{Mod}(\sC_0,e_0)$ as a fibre. In particular, we can endow $\mathrm{Fr}\,(E,e)$ with a canonical structure of complex analytic manifold: the one with respect to which the local homeomorphism $p: \mathrm{Fr}\,(E,e) \lra S$ is holomorphic. It then becomes a local biholomorphism ($=$ an étale analytic map) and the continuous sections of $\mathrm{Fr}\,(E,e)\lra S$ are exactly the holomorphic sections with respect to that holomorphic structure.
\end{rem}

If $f: T \lra S$ is a morphism in $\An$ and $(E,e,\sigma)$ is a family of framed elliptic curves on $S$, then the pullback family 
\begin{equation}
f^*E := \{(x,t)\in E\times T\ |\ \pi(x) = f(t)\}
\end{equation}
defined in \eqref{pullback_curve} is canonically framed by the section $f^*\sigma$ of the frame bundle $f^*\mathrm{Fr}\,(E,e) = \mathrm{Fr}\,(f^*E,f^*e)$. The next result is then proved similarly to Proposition \ref{prestack_M11_is_a_stack}.

\begin{prop}
The functor
$$
\M^{\,\mathrm{fr}}_{1,1}:
\begin{array}{ccl}
\An^{\text{op}} & \lra & \mathrm{Groupoids} \\
S & \lmt & \big\{\text{analytic families\ of\ framed\ elliptic\ curves\ parameterised\ by}\ S\big\} 
\end{array}
$$ 
is a stack, called the \emph{moduli stack of framed elliptic curves}\index{moduli stack!of framed elliptic curves}\index{stack!moduli stack}.
\end{prop}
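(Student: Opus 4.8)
The plan is to mimic, almost verbatim, the two-step argument used for $\M_{1,1}$ in Proposition \ref{prestack_M11_is_a_stack}: first record that $\M^{\,\mathrm{fr}}_{1,1}$ is a prestack on $\An$, and then verify the two descent conditions of Definition \ref{def_stack}. For the prestack structure, the pullback functor along $f:T\lra S$ is the one described just before the statement, sending $(E,\pi,e,\sigma)$ to $(f^*E,f^*\pi,f^*e,f^*\sigma)$, where $f^*E$ is the fibre product \eqref{pullback_curve} and $f^*\sigma$ is the pullback section of $f^*\mathrm{Fr}\,(E,e)$. Beyond what was already done for $\M_{1,1}$, the only new point is the canonical identification $f^*\mathrm{Fr}\,(E,e)\simeq\mathrm{Fr}\,(f^*E,f^*e)$ of principal $\mathrm{Mod}(\sC_0,e_0)$-covers over $T$, and the coherence of these identifications under a further pullback $g:U\lra T$; both follow from the fibrewise definition \eqref{frame_bundle} of $\mathrm{Fr}$ together with the canonical identification $g^*(f^*E)\simeq(f\circ g)^*E$ already used for $\M_{1,1}$. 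This gives Condition (1) of Definition \ref{def_prestack} and makes Condition (2) automatic, so $\M^{\,\mathrm{fr}}_{1,1}$ is a prestack.

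For descent, fix $S\in\An$ and an open covering $(S_i)_{i\in I}$. \emph{Gluing of morphisms}: given framed families $(E,e,\sigma)$, $(E',e',\sigma')$ over $S$ and morphisms $\phi_i:(E,e,\sigma)|_{S_i}\lra (E',e',\sigma')|_{S_i}$ agreeing on overlaps, Proposition \ref{prestack_M11_is_a_stack} produces a unique morphism of families $\phi:(E,e)\lra (E',e')$ with $\phi|_{S_i}=\phi_i$, and the compatibility $\mathrm{Fr}\,(\phi)\circ\sigma=\sigma'$ is then automatic since it can be checked on $(S_i)$, where it holds by hypothesis and by functoriality of $(E,e)\mapsto\mathrm{Fr}\,(E,e)$. \emph{Gluing of objects}: given framed families $(E_i,e_i,\sigma_i)$ over $S_i$ and isomorphisms $\phi_{ij}:(E_j,e_j,\sigma_j)|_{S_i\cap S_j}\overset{\simeq}{\lra}(E_i,e_i,\sigma_i)|_{S_i\cap S_j}$ satisfying the cocycle conditions, Proposition \ref{prestack_M11_is_a_stack} yields a family of elliptic curves $(E,e)$ over $S$ with isomorphisms $\psi_i:(E,e)|_{S_i}\lra (E_i,e_i)$ such that $\psi_i\circ\psi_j^{-1}=\phi_{ij}$. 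Applying $\mathrm{Fr}$ and using $\mathrm{Fr}\,((E,e)|_{S_i})\simeq\mathrm{Fr}\,(E,e)|_{S_i}$, we transport the $\sigma_i$ to local sections $\mathrm{Fr}\,(\psi_i)^{-1}\circ\sigma_i$ of $\mathrm{Fr}\,(E,e)$ over $S_i$, which agree on overlaps precisely because $\mathrm{Fr}\,(\phi_{ij})\circ\sigma_j=\sigma_i$. Since $\mathrm{Fr}\,(E,e)\lra S$ is a covering space (in particular a local homeomorphism), its sections form a sheaf, so these glue to a global section $\sigma:S\lra \mathrm{Fr}\,(E,e)$ with $\sigma|_{S_i}=\mathrm{Fr}\,(\psi_i)^{-1}\circ\sigma_i$, and $(E,e,\sigma)$ is the required framed family over $S$.

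The one point requiring genuine care — rather than a true obstacle — is the naturality package for $(E,e)\mapsto\mathrm{Fr}\,(E,e)$: that it is a functor from families of elliptic curves (and their isomorphisms) to principal $\mathrm{Mod}(\sC_0,e_0)$-covers, that it commutes with pullback along morphisms of $\An$ up to canonical and coherent isomorphism, and that under these identifications the cocycle $(\phi_{ij})$ is sent to a cocycle $(\mathrm{Fr}\,(\phi_{ij}))$. Most of this bookkeeping is already recorded in the construction of the frame bundle preceding the statement; once it is in place, the descent for $\sigma$ reduces to the elementary fact that sections of an étale map form a sheaf, and all the rest is inherited from Proposition \ref{prestack_M11_is_a_stack}.
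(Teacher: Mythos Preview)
Your proposal is correct and follows exactly the approach the paper indicates: the paper simply states that the result ``is then proved similarly to Proposition \ref{prestack_M11_is_a_stack}'' without spelling out the details, and your argument is precisely that --- the descent for $\M_{1,1}$ augmented by the sheaf property of sections of the frame bundle. If anything, you have supplied more detail than the paper does.
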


It is also clear that there is a morphism of stacks from the moduli stack of framed elliptic curves to the moduli stack of elliptic curves, defined, for families parameterised by $S\in\An$, by forgetting the frame,
$$F:
\begin{array}{rcl} 
\M^{\,\mathrm{fr}}_{1,1}(S) & \lra & \M_{1,1}(S) \\
(E,e,\sigma) & \lmt & (E,e)
\end{array}
$$ since this is compatible with taking pullbacks.

\begin{rem}\label{a_rem_about_higher_genus}
We could also have defined the objects in $\M^{\,\mathrm{fr}}_{1,1}(S)$ as triples $(E,e,\sigma')$ where $(E,e)$ is a family of elliptic curves parameterised by $S\in \An$ and $\sigma'$ is a section of the bundle $\mathrm{Fr}\,(E,e)\times_{\mathrm{Mod}(\sC_0,e_0)} H^1(\sC_0,e_0)$, associated to the principal $\mathrm{Mod}(\sC_0,e_0)$-bundle $\mathrm{Fr}\,(E,e)$ via the action of the modular group on $H^1(\sC_0,e_0)$, such that, for all $s\in S$, the element $\sigma'(s)=(\alpha(s),\beta(s))$ is a direct basis of $H^1(\sC_0,e_0)$. The upshot is that this is the direct generalisation of Definition \ref{framed_ell_curve} to families of elliptic curves. The fact that one can use one bundle or the other is because, in genus $1$, the action of $\mathrm{Mod}(\sC_0,e_0)$ on $H^1(\sC_0,e_0)$ is \emph{faithful}, which is no longer the case in higher genus. Definition \ref{framed_family}, in contrast, generalises immediately to higher genus (where it is no longer necessary to provide the curves with a base point).
\end{rem}

\subsection{Orbifold structure and coarse moduli space}

Contrary to the moduli stack $\M_{1,1}$, which is not representable by Proposition \ref{not_rep_if_non_triv_autom}, the moduli stack $\M^{\,\mathrm{fr}}_{1,1}$ is representable. To understand why, recall that, for all $S\in\An$,
$$
\M^{\,\mathrm{fr}}_{1,1}(S) = \big\{ (E,e,\sigma) : \text{families of framed elliptic curves on } S\big\}$$ so, for all $s\in S$, the framed elliptic curves $(E_s, e(s), \sigma(s))$ should be isomorphic, by Theorem \ref{Teich_space_of_ell_curves}, to the framed elliptic curve $(\C/\Lambda(\tau_s),0,(1,\tau_s))$, for a unique $\tau_s \in \fh$. This defines a map 
\begin{equation}\label{period_map}
f_\sigma:
\begin{array}{rcl}
S & \lra & \fh \\
s & \lmt & \tau_s
\end{array}
\end{equation}
corresponding to the family $(E,e,\sigma)$, which is in fact a morphism of complex analytic manifolds. By naturality with respect to $S$, the correspondence $(E,e,\sigma) \lmt f_\sigma$ in \eqref{period_map} induces a morphism of stacks 
\begin{equation}\label{representability_by_fh}
\M^{\,\mathrm{fr}}_{1,1} \lra \Mor_{\An}(-,\fh)
\end{equation}
which will be seen to be an isomorphism if we can define a universal family of framed elliptic curves 
$$
\U^{\,\mathrm{fr}}_{1,1} \lra \M^{\,\mathrm{fr}}_{1,1}.
$$
Indeed, if for all $\tau\in \fh$, the framed elliptic curve $(\U^{\,\mathrm{fr}}_{1,1})|_{\{s\}}$ is isomorphic to $(\C/\Lambda(\tau_s),0,(1,\tau_s))$, then by construction of the map $f_\sigma$ introduced in \eqref{period_map}, we have $f_\sigma^*\U^{\,\mathrm{fr}}_{1,1} \simeq (E,e,\sigma)$ as families of framed elliptic curves, and $f_\sigma$ is the unique map with that property. So $\U^{\,\mathrm{fr}}_{1,1}$ is indeed a universal family of framed elliptic curves and the morphism of stacks defined, for all $S\in\An$, by
$$
\begin{array}{rcl}
\Mor_{\An}(S,\fh) & \lra & \M^{\,\mathrm{fr}}_{1,1}(S) \\
(f : S \lra \fh) & \lmt & f^* \U^{\,\mathrm{fr}}_{1,1}
\end{array}
$$ is an inverse to the morphism of stacks $(E,e,\sigma) \lmt f_\sigma$ introduced in \eqref{representability_by_fh}. To check that the previous considerations are correct and that $\M^{\,\mathrm{fr}}_{1,1}$ is indeed representable by $\fh$, we still need to:
\begin{enumerate}
\item Prove that the map $f_\sigma: S \lra \fh$ constructed in \eqref{period_map} is indeed holomorphic.
\item Construct a universal family of framed elliptic curves over $\fh$, meaning a family $\U^{\,\mathrm{fr}}_{1,1} \lra \fh$ such that, for all $\tau\in \fh$, the fibre of $\U^{\,\mathrm{fr}}_{1,1}$ at $\tau$ is isomorphic to the framed elliptic curve $(\C/\Lambda(\tau),0,(1,\tau))$.
\end{enumerate}

For the first part, recall that, because of Definition \ref{framed_family}, the definition of the moduli stack of framed elliptic curves requires fixing an arbitrary complex elliptic curve $(\sC_0,e_0)$ and that it is in fact only the homeomorphism type of that complex elliptic curve that matters. But the latter is uniquely determined: it is the homeomorphism type of the topological torus $S^1\times S^1$, with an arbitrary base point, for instance $(1,1)$. The point is that this topological torus comes equipped with a canonical direct homology basis $(\alpha_0,\beta_0)$ for $H_1(S^1\times S^1;\Z)$, and that all other bases can be brought onto that one by an orientation-preserving self-homeomorphism of $S^1\times S^1$. Alternately, we can also choose $(\sC_0,e_0)$ to be the complex torus corresponding to the lattice $\Lambda(i):=\Z 1\oplus \Z i \subset \C$, which comes equipped with the direct basis $(1,i)$. The point is that we can fix not only $(\sC_0,e_0)$ but also a direct homology basis $(\alpha_0,\beta_0)$ for $H_1(\sC_0;\Z)$ and, given a family of framed elliptic curves $(E,\pi:E\lra S,e,\sigma)$, use that direct basis  $(\alpha_0,\beta_0)$ to define the map $\phi_\sigma$ explicitly as follows:
\begin{equation}\label{period_map_explicit}
\phi_\sigma:
\begin{array}{rcl}
S & \lra & \fh \\
s & \lmt & \tau_s := \frac{\int_{\sigma(s)_*\beta_0} \omega_s}{\int_{\sigma(s)_*\alpha_0} \omega_s} = \frac{\int_{\beta_0} \sigma(s)^*\omega_s}{\int_{\alpha_0} \sigma(s)^*\omega_s}
\end{array}
\end{equation} 
where $\sigma(s): \C/\Lambda(i) \lra E(s)$ is the orientation-preserving homeomorphism determined by $\sigma$, and $\omega_s$ is a continuous family of holomorphic $1$-form on the elliptic curve $E(s)$. This last part means that $\omega$ is a section of the sheaf $\pi_*\Omega^1_{E/S}$, the push-forward of the sheaf of relative holomorphic differential forms of degree $1$ of $E$ over $S$. It makes sense to see these relative differentials as families of differential forms along the fibres because, as $\pi:E\lra S$ is a submersion by assumption, the canonical sequence of $O_E$-modules
$$ 
0 \lra \pi^* \Omega^1_S \lra \Omega^1_E \lra \Omega^1_{E/S} \lra 0
$$
is exact and locally split (see for instance \cite[Proposition~5, p.37]{BLR_Neron}). It then follows that the map $\phi_\sigma$ defined in \eqref{period_map_explicit} is indeed holomorphic.

\medskip

It therefore only remains to construct a universal family $\pi:\U^{\,\mathrm{fr}}_{1,1} \lra \fh$. Again, we do this explicitly. First we consider the (free and proper) action of the discrete Abelian group $\Z^2$ on the product $\fh\times\C$ defined as follows and which we write on the right. For all $(m,n)\in\Z^2$ and all $(\tau,z)\in\fh\times\C$, set 
\begin{equation}\label{action_of_Z2}
(\tau,z) \cdot (m,n):= (\tau, z + m \tau + n ).
\end{equation}
Next we take the quotient \emph{manifold}
\begin{equation}\label{univ_family_of_framed_ell_curves}
\U^{\,\mathrm{fr}}_{1,1} := (\fh\times\C) / \Z^2
\end{equation}
and consider the map $\pi:\U^{\,\mathrm{fr}}_{1,1} \lra \fh$ induced by the first projection $(\tau,z)\lmt\tau$, which is a well-defined  holomorphic submersion. The point is that, for all $\tau\in\fh$, the identity map of $\C$ induces an isomorphism of complex analytic manifolds
$$
\pi^{-1}\big(\{\tau\}\big) \simeq \C/\Lambda(\tau)
$$
where $\Lambda(\tau) = \Z1 \oplus \Z\tau$. Moreover, the family of complex tori $\U^{\,\mathrm{fr}}_{1,1}\lra \fh$ has a canonical section $e:\fh\lra \U^{\,\mathrm{fr}}_{1,1}$, induced by the section $\tau\lmt(\tau,0)$ of the first projection $\fh\times\C \lra \fh$. Finally, for all $\tau\in\fh$, we can let 
$$
\sigma(\tau): \C/\Lambda(i) \lra \C/\Lambda(\tau)
$$ be the orientation-preserving homeomorphism defined as in the proof of Lemma \ref{framing_alt_def}, by setting $\tau_0=i$ and $\tau_1=\tau$ in \eqref{deformation_of_lattices}. We have therefore defined a family of framed elliptic curves $(\U^{\,\mathrm{fr}}_{1,1},e,\sigma)$ in the sense of Definition \ref{framed_family}. And, by construction, we have, for all $\tau\in\fh$ an isomorphism of framed elliptic curves
$$
\Big( \U^{\,\mathrm{fr}}_{1,1}\big|_{\{\tau\}}, e(\tau), \sigma(\tau) \Big) \simeq \big( \C/\Lambda(\tau), 0, (1,\tau) \big)
$$
so the family $\U^{\,\mathrm{fr}}_{1,1}$ is indeed a universal family. We summarise the previous considerations as follows.

\begin{thm}
The moduli stack of framed elliptic curves $\M^{\,\mathrm{fr}}_{1,1}$ is representable by the Teichmüller space $\fh$.
\end{thm}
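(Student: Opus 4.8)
The plan is to prove the theorem by exhibiting an explicit quasi-inverse to the morphism of stacks
$$\Phi\ :\ \M^{\,\mathrm{fr}}_{1,1}\ \lra\ \underline{\fh}$$
constructed above (cf.\ \eqref{representability_by_fh}), which on $S$-points sends a family of framed elliptic curves $(E,\pi,e,\sigma)$ over $S$ to its period map $\phi_\sigma:S\lra\fh$ of \eqref{period_map_explicit}. The candidate quasi-inverse is the morphism $\Psi:\underline{\fh}\lra\M^{\,\mathrm{fr}}_{1,1}$ sending a holomorphic map $f:S\lra\fh$ to the pulled-back family $f^{*}\U^{\,\mathrm{fr}}_{1,1}$, framed by $f^{*}\sigma$, where $(\U^{\,\mathrm{fr}}_{1,1},e,\sigma)$ is the universal family of \eqref{univ_family_of_framed_ell_curves}. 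First I would check that both are genuine morphisms of stacks: for $\Psi$ this is the naturality of pullback already used to show that $\M^{\,\mathrm{fr}}_{1,1}$ is a prestack, and for $\Phi$ it is the compatibility of \eqref{period_map_explicit} with base change, which is immediate since the relative $1$-form $\omega$, the section $e$ and the framing $\sigma$ all pull back. The theorem then reduces to producing natural isomorphisms $\Phi\circ\Psi\simeq\id_{\underline{\fh}}$ and $\Psi\circ\Phi\simeq\id_{\M^{\,\mathrm{fr}}_{1,1}}$.

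The composite $\Phi\circ\Psi$ is the easy direction. Given $f:S\lra\fh$, the fibre of $f^{*}\U^{\,\mathrm{fr}}_{1,1}$ over $s\in S$ is, with its induced framing, the framed elliptic curve $(\C/\Lambda(f(s)),0,(1,f(s)))$, so by the uniqueness statement of Theorem \ref{Teich_space_of_ell_curves} the period map of this framed family is exactly $s\lmt f(s)$, i.e.\ $\phi_{f^{*}\sigma}=f$. Since framed elliptic curves have no non-trivial automorphisms (Theorem \ref{Teich_space_of_ell_curves}), this equality upgrades to a \emph{unique} isomorphism of framed families, and by the Yoneda lemma (Lemma \ref{Yoneda}) these assemble into a natural isomorphism $\Phi\circ\Psi\simeq\id_{\underline{\fh}}$.

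The substance is the other composite, $\Psi\circ\Phi$. Starting from a family of framed elliptic curves $(E,\pi:E\lra S,e,\sigma)$ with period map $\phi:=\phi_\sigma:S\lra\fh$, I need a natural isomorphism of framed families $\phi^{*}\U^{\,\mathrm{fr}}_{1,1}\simeq(E,e,\sigma)$. Fibrewise, Theorems \ref{Teich_space_of_ell_curves} and \ref{isom_btw_ell_curve_and_its_Jacobian} together with Lemma \ref{framing_alt_def} already give, for each $s$, a unique isomorphism of framed elliptic curves $(E_s,e(s),\sigma(s))\simeq(\C/\Lambda(\tau_s),0,(1,\tau_s))=(\phi^{*}\U^{\,\mathrm{fr}}_{1,1})_s$; the content is that these glue to a biholomorphism over $S$. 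The plan is to write this isomorphism globally as a \emph{relative Abel--Jacobi map}: choosing a holomorphic section $\omega$ of $\pi_{*}\Omega^{1}_{E/S}$ as in \eqref{period_map_explicit}, normalised locally on $S$ so that $\int_{\sigma(s)_{*}\alpha_{0}}\omega_{s}=1$ (so that $\int_{\sigma(s)_{*}\beta_{0}}\omega_{s}=\tau_{s}$), one sets
\begin{equation*}
E\ \lra\ \phi^{*}\U^{\,\mathrm{fr}}_{1,1},\qquad x\ \lmt\ \bigl(\,\pi(x),\ \textstyle\int_{e(\pi(x))}^{x}\omega_{\pi(x)}\ \ \mathrm{mod}\ \Lambda(\tau_{\pi(x)})\,\bigr).
\end{equation*}
One then checks, in order: it is well defined (independence of the path, since periods of $\omega_s$ lie in $\Lambda(\tau_s)$ by definition of the period lattice); it is holomorphic, commutes with the projections to $S$, carries $e$ to the zero section, and carries $\sigma$ to the canonical framing of $\U^{\,\mathrm{fr}}_{1,1}$; and on each fibre it is the isomorphism of Theorem \ref{isom_btw_ell_curve_and_its_Jacobian}, hence a fibrewise isomorphism and therefore an isomorphism of families. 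Naturality in $S$, and the fact that this is \emph{the} natural isomorphism, again follow from the absence of automorphisms of framed elliptic curves. Combining the two composites shows $\Phi$ and $\Psi$ are mutually quasi-inverse, so $\M^{\,\mathrm{fr}}_{1,1}\simeq\underline{\fh}$, which is the assertion.

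I expect the main obstacle to be the holomorphy and well-definedness of this relative Abel--Jacobi map. One must use that a family of elliptic curves is, by Ehresmann's theorem (as recalled in the excerpt, using that $\pi$ is a proper submersion), a locally trivial topological fibre bundle, so that locally on $S$ the $1$-cycles $\sigma(s)_{*}\alpha_{0},\ \sigma(s)_{*}\beta_{0}$ and the integration paths from $e(\pi(x))$ to $x$ can be chosen to vary continuously; holomorphic dependence of the integral on the parameter $s$ then follows by differentiation under the integral sign, together with the holomorphy of $\omega$ along and transverse to the fibres coming from the locally split exact sequence $0\to\pi^{*}\Omega^{1}_{S}\to\Omega^{1}_{E}\to\Omega^{1}_{E/S}\to 0$. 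The quotient by the \emph{moving} lattice $\Lambda(\tau_s)$ has to be performed simultaneously with the integration — this is precisely why the target is taken to be the quotient manifold $(\fh\times\C)/\Z^{2}$ of \eqref{univ_family_of_framed_ell_curves} rather than a fibrewise construction.
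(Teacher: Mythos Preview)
Your proposal is correct and follows essentially the same route as the paper: define mutually inverse morphisms via the period map \eqref{period_map_explicit} and pullback of the universal family \eqref{univ_family_of_framed_ell_curves}, then verify the two composites. The paper is terser on the step $\Psi\circ\Phi\simeq\id$, essentially asserting the isomorphism $f_\sigma^{*}\U^{\,\mathrm{fr}}_{1,1}\simeq(E,e,\sigma)$ from the fibrewise uniqueness of Theorem \ref{Teich_space_of_ell_curves}, whereas you make this explicit through the relative Abel--Jacobi map; this is a welcome elaboration rather than a different argument.
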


It remains to show that this representability theorem for $\M^{\,\mathrm{fr}}_{1,1}$ is compatible with the action on $\fh$ of the modular group $\mathrm{Mod}(\sC_0,e_0)$ which, as in \eqref{modular_group_of_a_torus_with_marked_point}, we identify with $\SL(2;\Z)$. First note that, for all $S\in\An$, all family of elliptic curves $(E,e)$ over $S$ and all $s\in S$, the modular group $\mathrm{Mod}(\sC_0,e_0)$ acts on the set of framings of the elliptic curve $(E_s,e(s))$. Indeed, according to Definition \ref{framed_ell_curve_alt_def}, a framing of $(E_s, e(s))$ is a homotopy class of orientation-preserving homeomorphism $f: \sC_0 \lra E_s$ such that $f(e_0) = e(s)$, and we have seen in \eqref{mod_gp_action} that there is a right action of $\mathrm{Mod}(\sC_0,e_0)$ on the set of such framings, defined by $[f]\cdot[g] = [f\circ g]$. This induces an action of $\mathrm{Mod}(\sC_0,e_0)$ on the set $\Mor_{\An}(S,\fh)$, under which the morphism $\phi_\sigma: S\lra \fh$ corresponding to the family $(E,e,\sigma)$ is sent to the morphism $\phi_{\sigma\cdot g}$ corresponding to the family $(E,e,\sigma\cdot [g])$, where, for all $g\in\mathrm{Homeo}^+(\sC_0,e_0)$, we denote by $\sigma\cdot [g]$ the following section of the frame bundle $\mathrm{Fr}(E,e)$ introduced in \eqref{frame_bundle}:
\begin{equation}\label{abstract_action_on_framings}
(\sigma\cdot [g])(s) = \sigma(s) \cdot [g]\,.
\end{equation}
There only remains to compute $\phi_{E\cdot[g]}$ more explicitly. Note that $\sigma\cdot[g]$ is a framing of the \emph{same} family of elliptic curves $(E,e)$. So, in view of Definition \ref{period_map_explicit} and the fact that $(f\circ g)_* = f_* \circ g_*$ in homology, we can use Theorem \ref{modular_gp_action} to conclude that 
\begin{equation}\label{equivariance_ppty}
\phi_{\sigma\cdot[g]}(s) = \phi_\sigma(s) \cdot [g],
\end{equation}
where $\sigma\cdot [g]$ is defined as in \eqref{abstract_action_on_framings} and the action on the right-hand side is the $\SL(2;\Z)$-action on $\fh$ defined in \eqref{action_mod_group}.

\medskip

We are now in a position to prove that there is an isomorphism of stacks $\M_{1,1} \simeq [\mathfrak{h}/\SL(2;\Z)]$, which by Corollary \ref{modular_orbi_curve} will show that the moduli stack of elliptic curves is an orbifold, and to identify the universal family $\U_{1,1}$ as an orbifold itself. In order to define a morphism of stacks $\M_{1,1} \lra [\mathfrak{h}/\SL(2;\Z)]$, note that, if $S\in\An$ and $(E,\pi:E\lra S,e)\in\M_{1,1}(S)$ is a family of elliptic curves parameterised by $S$, then there is a morphism of analytic varieties $u: \mathrm{Fr}(E,e) \lra \fh$ from the frame bundle of $(E,\pi:E\lra S,e)$ to the Teichmüller space $\fh$, which is defined as follows. Recall that an element $[f]\in\mathrm{Fr}(E,e)$ is, by Definition \ref{framed_ell_curve_alt_def} and Equation \eqref{frame_bundle}, the homotopy class of an orientation-preserving homeomorphism $f: (\sC_0,e_0) \lra (E_{\pi([f])}, e(\pi([f])))$, where $(\sC_0,e_0)$ is our fixed elliptic curve $(\C/\Lambda(i),0)$ and $p:\mathrm{Fr}(E,e) \lra S$ is the canonical projection of the frame bundle. So we can define an analytic map $u: \mathrm{Fr}(E,e) \lra \fh$ by sending $[f]\in\mathrm{Fr}(E,e)$ to the complex number $\tau_{p([f])}$ defined in \eqref{period_map_explicit}. For the same reason as in Equation \eqref{equivariance_ppty}, the map $u: \mathrm{Fr}(E,e) \lra \fh$ thus constructed is $\SL(2;\Z)$-equivariant with respect to the $\SL(2;\Z)$-action on $\mathrm{Fr}(E,e)$ defined in \eqref{mod_gp_action} and the usual $\SL(2;\Z)$-action on $\fh$. And since $\mathrm{Fr}(E,e)$ is a principal $\SL(2;\Z)$-bundle on $S$ and the contruction we have just given is compatible with pullbacks with respect to a morphism $T\lra S$, we have indeed a morphism of stacks $\M_{1,1} \lra [\fh/\SL(2;\Z)]$.

\begin{thm}\label{main_goal}
Let $S\in\An$. The natural correspondence
$$
\Phi_S:
\begin{array}{ccc}
\M_{1,1}(S) & \lra & [\fh/\SL(2;\Z)](S) \\
(E,\pi:E\lra S,e) & \lmt & 
\left(
\begin{tikzcd}
 \mathrm{Fr}(E,e) \ar[r, "u"'] \ar[d, "p"'] & \fh\\
S & 
\end{tikzcd}
\right)

\end{array}
$$
induces an isomorphism of stacks $\Phi: \M_{1,1} \overset{\simeq}{\lra} [\fh/\SL(2;\Z)]$ over $\An$. In particular, the moduli stack of complex elliptic curves is a complex analytic orbifold in the sense of Definition \ref{orbifold_def}, admitting the geometric quotient $\fh/\SL(2;\Z) \simeq \C$ as a coarse moduli space in the sense of Definition \ref{CMS}.
\end{thm}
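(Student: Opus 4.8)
The plan is to prove, for every $S\in\An$, that the functor $\Phi_S$ is an equivalence of groupoids, naturally in $S$; since the quotient stack $[\fh/\SL(2;\Z)]$ is a complex analytic orbifold by Corollary \ref{modular_orbi_curve}, this transfers the orbifold structure (in the sense of Definition \ref{orbifold_def}) to $\M_{1,1}$, and the coarse moduli space assertion then follows from the fact that $\fh/\SL(2;\Z)$, identified with $\C$ via the $j$-invariant as in Theorem \ref{finite_pres} and the surrounding discussion, is a geometric quotient of $\fh$ by $\SL(2;\Z)$, hence a categorical quotient, hence a coarse moduli space for $[\fh/\SL(2;\Z)]$ by Corollary \ref{CMS_and_cat_quot}. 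That $\Phi$ is a well-defined morphism of stacks was already explained before the statement: for $(E,\pi,e)\in\M_{1,1}(S)$ the frame bundle $p\colon\mathrm{Fr}(E,e)\lra S$ is a principal $\SL(2;\Z)$-bundle by \eqref{modular_group_of_a_torus_with_marked_point}, the period map $u\colon\mathrm{Fr}(E,e)\lra\fh$ of \eqref{period_map_explicit} is $\SL(2;\Z)$-equivariant by \eqref{equivariance_ppty}, and both constructions are compatible with pullback along a morphism $T\lra S$ in $\An$.

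First I would check that $\Phi_S$ is fully faithful. Given $(E_1,e_1),(E_2,e_2)\in\M_{1,1}(S)$ and an isomorphism $\lambda\colon\mathrm{Fr}(E_1,e_1)\overset{\simeq}{\lra}\mathrm{Fr}(E_2,e_2)$ of principal $\SL(2;\Z)$-bundles over $S$ with $u_2\circ\lambda=u_1$, choose an open cover $(S_i)_{i\in I}$ of $S$ trivialising both frame bundles and, over each $S_i$, a section $\sigma_1^i$ of $\mathrm{Fr}(E_1,e_1)|_{S_i}$. This makes $(E_1|_{S_i},e_1,\sigma_1^i)$ and $(E_2|_{S_i},e_2,\lambda\circ\sigma_1^i)$ families of framed elliptic curves, and $u_2\circ\lambda=u_1$ says they have the same classifying map $S_i\lra\fh$ in the sense of \eqref{period_map_explicit}; since $\M^{\,\mathrm{fr}}_{1,1}$ is representable by $\fh$, they are isomorphic by a unique isomorphism of framed families, hence there is a unique isomorphism of (unframed) families $f_i\colon E_1|_{S_i}\lra E_2|_{S_i}$ with $\mathrm{Fr}(f_i)=\lambda|_{S_i}$ (the last equality holds because a morphism of principal $\SL(2;\Z)$-bundles is determined by the image of one section). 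The same uniqueness forces $f_i$ and $f_j$ to agree on $S_i\cap S_j$, so by the gluing axiom for the stack $\M_{1,1}$ (Proposition \ref{prestack_M11_is_a_stack}) they assemble into a unique $f\colon E_1\lra E_2$ with $\mathrm{Fr}(f)=\lambda$; uniqueness of $f$ gives faithfulness.

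Next, essential surjectivity. Here I would first record that the universal family $\U^{\,\mathrm{fr}}_{1,1}=(\fh\times\C)/\Z^2$ carries an $\SL(2;\Z)$-action covering the action on $\fh$: the assignment $(\tau,z)\mapsto\bigl(\tfrac{a\tau+b}{c\tau+d},\tfrac{z}{c\tau+d}\bigr)$ for $\bigl(\begin{smallmatrix}a&c\\b&d\end{smallmatrix}\bigr)\in\SL(2;\Z)$ is compatible with the $\Z^2$-action \eqref{action_of_Z2} because $\tfrac{1}{c\tau+d}\Lambda(\tau)=\Lambda(\tfrac{a\tau+b}{c\tau+d})$, and it acts fibrewise by isomorphisms of elliptic curves; forgetting the frame, the unframed universal family over $\fh$ becomes an $\SL(2;\Z)$-equivariant family. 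Now let $(P,v)\in[\fh/\SL(2;\Z)](S)$, with $P\lra S$ a principal $\SL(2;\Z)$-bundle and $v\colon P\lra\fh$ equivariant. Over an open cover $(S_i)$ of $S$ trivialising $P$, the restriction of $v$ gives maps $v_i\colon S_i\lra\fh$, and the framed families $v_i^*\U^{\,\mathrm{fr}}_{1,1}$, with their frames forgotten, glue via the transition functions $g_{ij}\colon S_i\cap S_j\lra\SL(2;\Z)$ of $P$ and the equivariance of $\U^{\,\mathrm{fr}}_{1,1}$ into a family $(E,e)\in\M_{1,1}(S)$ (again using that $\M_{1,1}$ is a stack). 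Since the frame bundle of a framed family is canonically trivialised by its frame, one gets $\mathrm{Fr}(E,e)|_{S_i}\simeq S_i\times\SL(2;\Z)$ with transition functions $g_{ij}$, whence an isomorphism $\mathrm{Fr}(E,e)\simeq P$ intertwining $u$ and $v$, i.e. $\Phi_S(E,e)\simeq(P,v)$. Equivalently, this whole step can be packaged as the statement that $\M_{1,1}$ is the quotient stack $[\M^{\,\mathrm{fr}}_{1,1}/\SL(2;\Z)]$ for the modular group action, which becomes $[\fh/\SL(2;\Z)]$ after substituting $\M^{\,\mathrm{fr}}_{1,1}\simeq\fh$.

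I expect the descent step inside essential surjectivity to be the main obstacle: one must check carefully that the $\SL(2;\Z)$-equivariance of $\U^{\,\mathrm{fr}}_{1,1}$ supplies exactly a gluing datum for $\M_{1,1}$ over the cover $(S_i)$ (or, in the cleaner formulation, a descent datum along the covering $P\lra S$), with the cocycle condition coming from the action axioms, and that the glued family has frame bundle $P$ with period map $v$. Once $\Phi$ is established to be an isomorphism of stacks, Corollary \ref{modular_orbi_curve} shows that $\M_{1,1}$ is an analytic orbifold, and the discussion of $\fh/\SL(2;\Z)$ above identifies its coarse moduli space with $\C$.
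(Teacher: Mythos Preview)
Your proposal is correct and rests on the same key ingredient as the paper: the $\SL(2;\Z)$-equivariant structure on the universal framed family $\U^{\,\mathrm{fr}}_{1,1}$ over $\fh$, which you write down explicitly (and which the paper extends to the $\Z^2\rtimes\SL(2;\Z)$-action \eqref{action_for_def_of_univ_family} on $\fh\times\C$).

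The packaging differs slightly. You prove that each $\Phi_S$ is an equivalence by separately checking full faithfulness and essential surjectivity, working over a trivialising cover and gluing via the stack axioms for $\M_{1,1}$. The paper instead constructs an explicit inverse $\Psi$ in one stroke: given $(P,u)\in[\fh/\SL(2;\Z)](S)$, it forms the pullback $\hat E:=u^*\U^{\,\mathrm{fr}}_{1,1}$ over $P$, observes that the $\SL(2;\Z)$-action on $\hat E$ is free and proper (because it already is on $P$, and $\hat E\to P$ is proper and equivariant), and sets $E:=\hat E/\SL(2;\Z)$ over $S=P/\SL(2;\Z)$. This global quotient is exactly the descent along the principal cover $P\to S$ that you identify as the ``main obstacle'': phrasing it as a free proper quotient sidesteps the explicit cocycle bookkeeping. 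Conversely, your argument makes the full-faithfulness of $\Phi$ explicit, whereas the paper leaves the verification that $\Psi$ and $\Phi$ are mutually inverse implicit. Your final remark that this amounts to $\M_{1,1}\simeq[\M^{\,\mathrm{fr}}_{1,1}/\SL(2;\Z)]$ is precisely the conceptual content of both arguments.
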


\begin{proof}
We will define an inverse to $\Phi:  \M_{1,1} \lra [\fh/\SL(2;\Z)]$ in the following way. Given a pair $(P,u)$, consisting of a principal $\SL(2;\Z)$-bundle $P\lra S$ and an $\SL(2;\Z)$-equivariant map $u: P \lra \fh$, we consider the pullback family $\hat{E} := u^*\U_{1,1}^{\,\mathrm{fr}}$ over $P$. We claim that $\SL(2;\Z)$ acts on the universal family $\U_{1,1}^{\,\mathrm{fr}}$ and that this induces a free and proper action on the family $\hat{E}$, so we can consider the quotient family $E:= \hat{E}/\SL(2;\Z)$ over $S=P/\SL(2;\Z)$. The correspondence $(P,u)\lmt E$ then defines a natural transformation $\Psi_S: [\fh/\SL(2;\Z)](S) \lra \M_{1,1}(S)$, which provides an inverse to $\Phi$. In order to check some of the details, we spell out the action of $\SL(2;\Z)$ on the universal family $\U_{1,1}^{\,\mathrm{fr}}$.

\medskip

Recall from \eqref{univ_family_of_framed_ell_curves} that $\U^{\,\mathrm{fr}}_{1,1} = (\fh\times\C) / \Z^2$, where $\Z^2$ acts on $\fh$ via \eqref{action_of_Z2}. In fact, this $\Z^2$-action extends to an action of the semi-direct product $\Z^2 \rtimes \SL(2;\Z)$, where $\SL(2;\Z)$ acts to the left on $\Z^2$, via the standard representation. Explicitly, if we consider the semi-direct product $\Z^2 \rtimes \SL(2;\Z)$ with group law defined as follows
$$
\left( \big(m,n\big)\ ,\ \begin{pmatrix} a & c \\ b & d \end{pmatrix} \right)  
\left( \big(m',n'\big)\ ,\ \begin{pmatrix} a' & c' \\ b' & d' \end{pmatrix} \right)
= 
\left( \big(m, n\big) + \big(am'+cn', bm'+dn')\ ,\ \begin{pmatrix} a & c \\ b & d \end{pmatrix}  \begin{pmatrix} a' & c' \\ b' & d' \end{pmatrix} \right)\, ,
$$
then the formula
\begin{equation}\label{action_for_def_of_univ_family}
\big(\tau,z\big)\cdot \left( \big(m,n\big)\ ,\  \begin{pmatrix} a & c \\ b & d \end{pmatrix} \right) 
:=
\left( \frac{a\tau+b}{c\tau+d}\ ,\ \frac{z + m\tau +n}{c\tau+d} \right) 
\end{equation} defines a right action of $\hat{G} := \Z^2\rtimes \SL(2;\Z)$ on $\fh\times \C$. There is therefore an induced action of the quotient group $\SL(2;\Z) = \hat{G} / \Z^2 $ on the universal family $\U^{\,\mathrm{fr}}_{1,1} = (\fh\times\C) / \Z^2$. 

\medskip

Going back to the proof of Theorem \ref{main_goal}, we see that the action \eqref{action_for_def_of_univ_family} induces an action of $\SL(2;\Z)$ on $u^* \U^{\,\mathrm{fr}}_{1,1}$ which, by construction, makes the proper, surjective map $u^*\U^{\,\mathrm{fr}}_{1,1} \lra P$ equivariant. Since the $\SL(2;\Z)$-action on $P$ is free and proper, it is also free and proper on $u^*\U^{\,\mathrm{fr}}_{1,1}$, as proved for instance in \cite[Proposition~3.5.8, p.156]{Thurston_1997}.
\end{proof}

As a corollary, we obtain a description of the universal family of (unframed) elliptic curves $\U_{1,1} \lra \M_{1,1}$. Contrary to the universal family of framed elliptic curves $\U^{\,\mathrm{fr}}_{1,1}$, it is no longer (representable by) a manifold. We can nonetheless identify it explicitly as an analytic orbifold.

\begin{cor}\label{description_of_univ_family_of_ell_curves}
Let $\U_{1,1} \lra \M_{1,1}$ be the universal family of elliptic curves over the moduli stack $\M_{1,1}$. Then there is an isomorphism of stacks  $$\U_{1,1} \simeq \big[(\fh\times\C) \ \big/ \ \big(\Z^2\rtimes\SL(2;\Z)\big) \big]$$ where the semi-direct product $\Z^2\rtimes\SL(2;\Z)$ acts on $(\fh\times\C)$ as in \eqref{action_for_def_of_univ_family}. In particular, the universal family of elliptic curves is an analytic orbifold.
\end{cor}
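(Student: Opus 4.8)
The plan is to transport the isomorphism $\M_{1,1}\simeq[\fh/\SL(2;\Z)]$ of Theorem \ref{main_goal} to the total space of the universal family, using that the structure morphism $\U_{1,1}\lra\M_{1,1}$ is representable (Definition \ref{family_ell_curves_over_stack}). First I would record two elementary features of the action \eqref{action_for_def_of_univ_family}: writing $\hat G:=\Z^2\rtimes\SL(2;\Z)$, the first projection $\fh\times\C\lra\fh$ is $\hat G$-equivariant for the $\hat G$-action on $\fh$ obtained through the quotient homomorphism $\hat G\twoheadrightarrow\hat G/\Z^2=\SL(2;\Z)$, and the normal subgroup $\Z^2\lhd\hat G$ acts via \eqref{action_of_Z2}, with $(\fh\times\C)/\Z^2=\U^{\,\mathrm{fr}}_{1,1}$. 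Hence $\hat G$ induces a right action of $\SL(2;\Z)$ on the manifold $\U^{\,\mathrm{fr}}_{1,1}$ lifting the one on $\fh$, the equivariant projection $q:\U^{\,\mathrm{fr}}_{1,1}\lra\fh$ yields a morphism of stacks $[(\fh\times\C)/\hat G]=[\U^{\,\mathrm{fr}}_{1,1}/\SL(2;\Z)]\lra[\fh/\SL(2;\Z)]$, and, by the argument used in the proof of Theorem \ref{main_goal} (i.e.\ \cite[Proposition~3.5.8]{Thurston_1997}), the $\SL(2;\Z)$-action on $\U^{\,\mathrm{fr}}_{1,1}$ is proper.

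Next I would show that this morphism $[(\fh\times\C)/\hat G]\lra[\fh/\SL(2;\Z)]$ is a family of elliptic curves over the stack $[\fh/\SL(2;\Z)]$ in the sense of Definition \ref{family_ell_curves_over_stack}, whose classifying morphism (in the sense of the discussion following that definition) is the inverse $\Psi$ of the isomorphism $\Phi$ built in Theorem \ref{main_goal}. The key point is the pullback identity: for any $S\in\An$ and any $S$-point of $[\fh/\SL(2;\Z)]$, that is, a principal $\SL(2;\Z)$-bundle $P\lra S$ together with an equivariant $u:P\lra\fh$, there is a $2$-cartesian square exhibiting
$$[(\fh\times\C)/\hat G]\times_{[\fh/\SL(2;\Z)]}S\;\simeq\;\big[(u^*\U^{\,\mathrm{fr}}_{1,1})\,/\,\SL(2;\Z)\big],$$
since $\U^{\,\mathrm{fr}}_{1,1}\times_\fh P=u^*\U^{\,\mathrm{fr}}_{1,1}$; because $\SL(2;\Z)$ acts freely and properly on $P$ it does so on $u^*\U^{\,\mathrm{fr}}_{1,1}$, so this quotient stack is the manifold $(u^*\U^{\,\mathrm{fr}}_{1,1})/\SL(2;\Z)$, which is exactly the family $\Psi_S(P,u)$ of elliptic curves over $S=P/\SL(2;\Z)$ produced in the proof of Theorem \ref{main_goal}. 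In particular the morphism is representable and its pullbacks are always families of elliptic curves, so Definition \ref{family_ell_curves_over_stack} holds, and its classifying morphism $[\fh/\SL(2;\Z)]\lra\M_{1,1}$ is $\Psi=\Phi^{-1}$. By the defining property of the universal family this gives $[(\fh\times\C)/\hat G]\simeq\Psi^*\U_{1,1}$ as stacks, and since $\Psi=\Phi^{-1}$ is an isomorphism, $\Psi^*\U_{1,1}\simeq\U_{1,1}$; hence $\U_{1,1}\simeq[(\fh\times\C)/\hat G]$. (Equivalently one may argue with atlases: pulling $\U_{1,1}$ back along the canonical atlas $\fh\lra\M_{1,1}$ gives the manifold $\U^{\,\mathrm{fr}}_{1,1}=(\fh\times\C)/\Z^2$, which is then an atlas for $\U_{1,1}$, and the associated groupoid $\U^{\,\mathrm{fr}}_{1,1}\times_{\U_{1,1}}\U^{\,\mathrm{fr}}_{1,1}\simeq\U^{\,\mathrm{fr}}_{1,1}\times_\fh\big(\fh\times_{\M_{1,1}}\fh\big)\simeq\U^{\,\mathrm{fr}}_{1,1}\times\SL(2;\Z)$ is the action groupoid of the $\SL(2;\Z)$-action above, so $\U_{1,1}\simeq[\U^{\,\mathrm{fr}}_{1,1}/\SL(2;\Z)]$.)

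To conclude that $[(\fh\times\C)/\hat G]$ is an analytic orbifold, I would apply Theorem \ref{orbifold_quotient_stacks} to the discrete group $\SL(2;\Z)$ acting on the analytic manifold $\U^{\,\mathrm{fr}}_{1,1}$: the action is proper as recalled above, and the stabiliser of a point of $\U^{\,\mathrm{fr}}_{1,1}$ lying over $\tau\in\fh$ is a subgroup of $\SL(2;\Z)_\tau$, hence finite of order dividing $2$, $4$ or $6$ by Corollary \ref{autom_of_elliptic_curves_and_genus_one_Riemann_surfaces_again}; so Conditions (1)--(2) of Theorem \ref{orbifold_quotient_stacks} hold, Condition (1) via the Dirichlet-domain construction of Remark \ref{Dirichlet_domain} applied to the $\SL(2;\Z)$-invariant metric induced by the $\hat G$-invariant Riemannian metric $\tfrac{|dz|^2}{\mathrm{Im}\,\tau}+(\text{hyperbolic metric on }\fh)$ on $\fh\times\C$. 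Alternatively, Selberg's lemma provides a torsion-free finite-index normal subgroup $\pi\lhd\SL(2;\Z)$, and $\Z^2\rtimes\pi$ then acts freely and properly on $\fh\times\C$, so $(\fh\times\C)/(\Z^2\rtimes\pi)\lra[(\fh\times\C)/\hat G]$ is a finite cover of $[(\fh\times\C)/\hat G]$ by a manifold.

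The step I expect to be the main obstacle is the pullback identity of the second paragraph — more precisely, the bookkeeping that makes it \emph{natural} in $S$ and shows the resulting morphism $[\fh/\SL(2;\Z)]\lra\M_{1,1}$ is literally $\Psi$. One has to check that the $\SL(2;\Z)$-action on $\U^{\,\mathrm{fr}}_{1,1}$ coming from formula \eqref{action_for_def_of_univ_family} coincides with the one forced by the $\SL(2;\Z)$-equivariance \eqref{equivariance_ppty} of the period map together with the $\Z^2$-action \eqref{action_of_Z2} on the fibres; equivalently, that the semi-direct product structure encoded in \eqref{action_for_def_of_univ_family} is the correct one. This amounts to unwinding the compatibility between the frame bundle $\mathrm{Fr}(E,e)$, the morphism $u:\mathrm{Fr}(E,e)\lra\fh$ of Theorem \ref{main_goal}, and the universal framed family, and is precisely the place where the conventions fixed in Sections \ref{modular_curve_section} and \ref{stack_of_ell_curves} must be handled with care.
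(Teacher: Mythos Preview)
Your proposal is correct and is precisely the argument the paper leaves implicit: the corollary is stated without a separate proof, its content being that the $\hat G$-action \eqref{action_for_def_of_univ_family} introduced in the proof of Theorem~\ref{main_goal} makes $[\U^{\,\mathrm{fr}}_{1,1}/\SL(2;\Z)]\lra[\fh/\SL(2;\Z)]$ the family of elliptic curves classified by the inverse isomorphism $\Psi$, which is exactly your second paragraph. One small caveat on the orbifold verification: the metric $\tfrac{|dz|^2}{\mathrm{Im}\,\tau}+g_{\mathrm{hyp}}$ is \emph{not} $\hat G$-invariant on the total space $\fh\times\C$, because under $z\mapsto z/(c\tau+d)$ the form $dz$ acquires a $d\tau$-cross-term; so the Dirichlet-domain route would need a different invariant distance, but your Selberg alternative is valid and already suffices.
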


The isomorphism $\M_{1,1} \simeq [\fh/\SL(2;\Z)]$ appears as Theorem~4.1 in \cite[p.12]{Groth_constr_axiom_Teich}, and the fact that $\fh/\SL(2;\Z)$ is a coarse moduli space for $\M_{1,1}$ is discussed in \cite[Theorem~5.2, p.15]{Groth_constr_axiom_Teich}. The link with $V$-manifolds in the sense of Satake is mentioned on p.16 of \cite{Groth_constr_axiom_Teich}.



\end{document}